\documentclass[11pt]{amsart}
\allowdisplaybreaks

\usepackage{geometry} 
\usepackage{framed} 

\usepackage{tikz}
\tikzset{every picture/.style={line width=1pt}} 

\usepackage{setspace}
\setdisplayskipstretch{2.5}
\usepackage{amsthm} 
\usepackage{amsmath}
\usepackage{amssymb}
\usepackage{mathrsfs}
\usepackage{bm}
\usepackage{subfigure}
\usepackage[hidelinks]{hyperref}
\usepackage[noabbrev]{cleveref}

\usepackage{amsfonts} 

\newcommand\C{\mathbb{C}}

\renewcommand\H{\mathcal{H}}

\newcommand\R{\mathbb{R}}

\newcommand\Z{\mathbb{Z}}

\newcommand{\ie}{\mathrm{ie}}

\newcommand\vphi{\varphi}
\renewcommand\phi{\vphi}
\newcommand\eps{\varepsilon}

\usepackage{stmaryrd} 

\newcommand\dist{\textnormal{dist}}

\newcommand\del{\partial}

\renewcommand{\div}{\textnormal{div}}

\newcommand{\CC}{\mathbb C}

\newcommand{\RR}{\mathbb R}
\newcommand{\ZZ}{\mathbb Z}
\newcommand{\calA}{{\mathcal A}}

\newcommand{\calC}{{\mathcal C}}

\newcommand{\calH}{{\mathcal H}}
\newcommand{\calI}{\mathcal I}

\newcommand{\calP}{{\mathcal P}}

\newcommand{\calT}{{\mathcal T}}
\newcommand{\bbT}{\mathbb T}

\newcommand{\calU}{{\mathcal U}}

\newcommand{\Si}{\Sigma}

\makeatletter
\def\@tocline#1#2#3#4#5#6#7{\relax
  \ifnum #1>\c@tocdepth 
  \else
    \par \addpenalty\@secpenalty\addvspace{#2}%
    \begingroup \hyphenpenalty\@M
    \@ifempty{#4}{%
      \@tempdima\csname r@tocindent\number#1\endcsname\relax
    }{%
      \@tempdima#4\relax
    }%
    \parindent\z@ \leftskip#3\relax \advance\leftskip\@tempdima\relax
    \rightskip\@pnumwidth plus4em \parfillskip-\@pnumwidth
    #5\leavevmode\hskip-\@tempdima
      \ifcase #1
       \or\or \hskip 1em \or \hskip 2em \else \hskip 3em \fi%
      #6\nobreak\relax
    \dotfill\hbox to\@pnumwidth{\@tocpagenum{#7}}\par
    \nobreak
    \endgroup
  \fi}
\makeatother

\newtheoremstyle{newtheoremstyle}
{3pt}
{3pt}
{\itshape}
{\parindent}
{\bfseries}
{.}
{0.5em}
{} 

\newtheoremstyle{newtheoremstyledefn}
{3pt}
{3pt}
{}
{\parindent}
{\bfseries}
{.}
{0.5em}
{} 

\theoremstyle{newtheoremstyle}
\newtheorem{theorem}{Theorem}
\newtheorem*{theorem*}{Theorem}
\newtheorem{lemma}[theorem]{Lemma}
\newtheorem{prop}[theorem]{Proposition}
\newtheorem{corollary}[theorem]{Corollary}

\theoremstyle{newtheoremstyledefn}
\newtheorem{defn}[theorem]{Definition}

\newtheorem{remark}{Remark}

\numberwithin{equation}{section} 
\numberwithin{theorem}{section}

\usepackage{fancyhdr}
\begin{document}

\title{Minimal Surfaces with Stratified Branching Sets}

\author{
	Federico Franceschini
	\and
	Rafe Mazzeo 
        \and
        Paul Minter
}

\address{\textnormal{Department of Mathematics, Stanford University, Building 380, Stanford, CA 94305, USA}}
\email{ffede@stanford.edu, rmazzeo@stanford.edu}
\address{\textnormal{Department of Pure Mathematics and Mathematical Statistics, University of Cambridge, Wilberforce Road, Cambridge, UK, CB3 0WB}}
\email{pdtwm2@cam.ac.uk}
\address{\textnormal{}}
\email{}

\begin{abstract}
Inspired by the Taubes--Wu construction of $\calC^{1,\alpha}$ two-valued harmonic functions by the use of symmetry, we construct minimal surfaces 
with stratified branching sets as graphs of $\calC^{1,\alpha}$ two-valued functions. 

We give three constructions. The first is perturbative and produces branched minimal submanifolds in arbitrary codimension as two-valued graphs
over the $n$-ball or, slightly more generally, over the product of $B^n$ with a torus $\bbT^N$, parametrized by boundary data which is required to be small
in a suitable norm. The second uses barrier methods together with a reflection argument to produce branched stable minimal hypersurfaces,
again as two-valued graphs over the unit $n$-ball or $B^n \times \bbT^N$, parametrized by boundary data which now can be large. Finally, using 
bifurcation theory, we produce compact minimal submanifolds with similarly stratified branching sets in an ambient space $S^n \times \RR$ with 
a suitable (analytic) warped product metric.  

These examples give minimal submanifolds with novel frequency values and whose branching sets have non-trivial deeper strata.  While the main constructions are fairly elementary, they rely
on the use of precisely tailored (and somewhat non-standard) function spaces, combined with a regularity theory which provides full
asymptotic expansions around the branching sets. 
\end{abstract} 

\maketitle

\section{Introduction} 

\subsection{Overview} 
It is well-known that a minimal surface $M$ can exhibit branch point singularities, i.e., singular points where $M$ has a (unique) tangent plane 
of multiplicity $Q>1$, yet $M$ is not locally the union of $Q$ distinct minimal graphs over its tangent plane. The set of all such points 
where this occurs is called the \emph{branching set}.   In this paper we are concerned entirely with branch points of 
multiplicity two, i.e., where $Q=2$.

Branched minimal surfaces are known to exist; a standard example is the minimal surface $\{(z,w)\in \C^2:z^2=w^3\}$, which is represented 
by the graph of the (average-free) two-valued $\calC^{1,1/2}$ function $u(w):=\pm w^{3/2}$, and has a branch point at $0$. Similar codimension 
$2$ examples can be constructed as algebraic subvarieties in $\C^n$.   

\begin{figure}[b]
\centering
\includegraphics[width=0.35\linewidth]{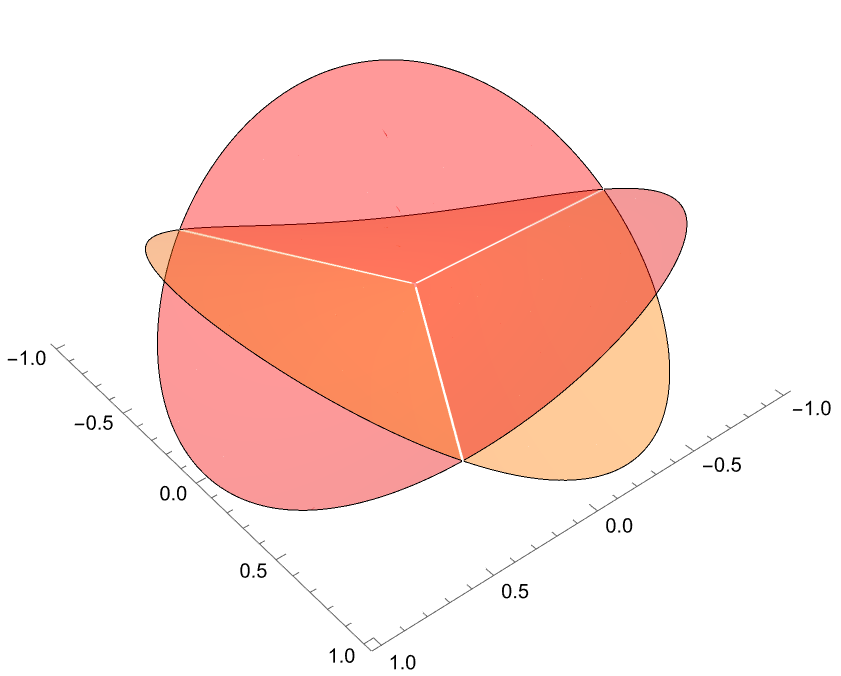}
\caption{\footnotesize Plot of $\text{Re}(x+iy)^{3/2}.$ The graph cannot be disentangled into two distinct smooth graphs around the origin: it is a branch point.}
\label{fig:3/2}
\end{figure}
 
These examples already indicate what turns out to be the typical
behavior of branch points with multiplicity $2$ (cf.~\cite{KW21}). Indeed, around any branch point $p\in M$, we define the \emph{frequency value} $\nu \geq 1$ by 
\[
\dist (M\cap B_r(p), T_pM\cap B_r(p)) \sim r^\nu\qquad \text{ as } r\to 0,
\]
if it exists. Here $B_r(p)$ is a ball in the ambient space and $\dist$ is, say, the Hausdorff distance.  As motivated by the complex
examples above, and justified more fully later in this paper, we call $\{\tfrac{3}{2}, \tfrac52, \tfrac72, \tfrac{11}{2},\ldots\}$ the 
set of {\it $2$-dimensional frequency values}. 

There are many codimension $1$ examples, constructed as minimal immersions in $\R^3$ using appropriate Weierstrass data. For these, the 
branching sets consist of a discrete set of points and the frequencies at these points necessarily lie in the set of $2$-dimensional frequency values above.
More generally, Simon and Wickramasekera \cite{SW07} constructed a large family of branched stable minimal hypersurfaces in $\RR^{n+1}$ 
as graphs of two-valued $\calC^{1,\alpha}$ functions (see also \cite{Kru19, Ros10}). In these cases, the branch set is $\{(0,0)\}\times \R^{n-2}$ 
and the structure of the surface around all the branch points is locally modelled on the graphs of $\pm\text{Re}((x_1+ix_2)^{k/2})$ for $k\geq 3$ odd.
In particular the set of frequencies is the 2-dimensional one.

To the authors' knowledge, all previously known codimension one branched minimal submanifolds have ``simple'' branch sets which consist 
of a smooth submanifold (of codimension $2$ inside $M$) and branch points having a 2-dimensional frequency value. The present paper
constructs three classes of examples of minimal surfaces of any dimension $n\ge 2$ and any codimension $\ge1$, that:
\begin{itemize}
\item[(a)] Possess a stratified branching set of dimension $n-2$ (in particular, the branching set is not a smooth submanifold),
\item[(b)] Have at least some branch points with (explicit) non-$2$-dimensional frequency values. This is especially interesting 
in the codimension one case. 
\end{itemize}

\subsection{Main results}
All our examples arise as the graphs of (average-free) two-valued functions of class $\calC^{1,\alpha}$ over $B^n$, $B^n \times \bbT^N$, or 
$S^n \times \bbT^N$, where $n\geq 3$, $N\geq 0$, and $\bbT^N$ is the flat $N$-dimensional torus. 
Let us describe the three classes of examples constructed here.

The first concerns perturbations of the disk $B^3\times\{0\}\subset\R^4$ with multiplicity $2$. The key to this construction is that the deformations 
preserving minimality are modelled on two-valued \textit{harmonic} functions on $B^3$ of class $\calC^{1,\alpha}$ which themselves have stratified 
branching sets consisting of a finite union of rays emanating from the origin. The frequency at any point along these rays is one of the $2$-dimensional 
values, but significantly, it jumps at the origin to a value which is strictly greater than $3/2$ and is not a half integer. 
(In fact, this frequency value is the positive solution to the quadratic equation $\nu(\nu+1)=\lambda_1(T),$ where $\lambda_1(T)$ is the 
first Dirichlet eigenvalue of a face of a regular spherical tetrahedron.)

The existence of such two-valued harmonic functions is a recent result by 
Taubes and Wu \cite{TW20}. Their examples are constructed by imposing a certain discrete symmetry; we refer to \cite{He,FS25} for 
different examples and the forthcoming \cite{TW26} for certain higher dimensional cases. We expand on this, first generalizing 
the idea to $B^n$ for $n \geq 3$, and then showing that the same construction works for minimal perturbations of $B^n \times \bbT^N$, 
where the discrete group acts only on the first factor. In the case $n=2$ we recover the average-free examples of  \cite{SW07}.
All of this is the content Section \ref{sec:perturbative} -- see Theorem~\ref{thm:main-perturbative}.  
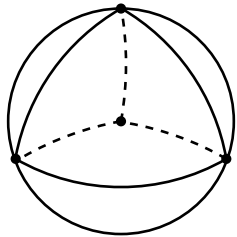
\begin{figure}[h]
    \centering
    \begin{tikzpicture}
        \draw (0,0) circle (1.5);
        \filldraw (0,1.5) circle (0.05);
        \filldraw (-1.4,-0.5) circle (0.05);
        \filldraw (1.4,-0.5) circle (0.05);
        \filldraw (0,0) circle (0.05);
        \draw [dashed] (0,0) arc (-10:10:4.4);
        \draw (-1.4,-0.5) arc (-120:-60:2.8);
        \draw [dashed] (1.4,-0.5) arc (60:80:4);
        \draw [dashed] (0,0) arc (100:120:4);
        \draw (0,1.5) arc (120:169:2.9);
        \draw (1.4,-0.5) arc (10:60:2.9);
    \end{tikzpicture}
    \caption{\footnotesize The spherical tetrahedron, which is determined by the spherical projection of the standard 3-simplex in $\R^3$.}
    \label{fig:tetrahedron}
\end{figure}

Section \ref{sec:perturbative} also contains a review and a streamlined proof of the perturbation theorem of Caffarelli--Hardt--Simon \cite{CHS}, 
as well as its generalization by Smale \cite{Smale93}. In this section we also provide background on the emerging theory of $\ZZ_2$-harmonic 
functions, i.e., two-valued harmonic functions of class $\calC^1$. 

The second construction, which holds only in codimension $1$, involves a more traditional approach via barriers 
in the spirit of Jenkins and Serrin \cite{JS}. This produces two-valued functions over $B^n$ (or $B^n \times \bbT^N$) satisfying the minimal 
surface equation and which branch over the same union of rays in the ball if $n=3$ (and appropriate linear sectors if $n > 3$), but which
are no longer perturbative so the graph functions $u$ may have large norm. This is the content of Theorem~\ref{thm:main}. We remark that the proof of Theorem~\ref{thm:main} is 
essentially self-contained.

Both of the above sets of examples are obtained by solving boundary value problems at the outer boundary $\del B^n \times \bbT^N$.
It is natural to ask for compact examples as well, and we show that these too can be constructed, but now using bifurcation theory.
These arise as submanifolds of a product $S^n \times (-1,1)$ with a warped product metric $g = dt^2 + f(t)^2\, g_{S^{n}}$
for suitable warping functions $f(t)$. These types of manifolds can be `planted' inside a larger compact manifold.  These examples are found as
the result of applying some standard bifurcation theorems to the family of minimal surface operators associated to
the family of metrics $dt^2 + f(\lambda t)^2 g_{S^n}$ as the parameter $\lambda$ increases. This is Theorem~\ref{thm:bifurcation2}.
 
\subsection{Comments} There are several morals to be drawn from all of this.  The first is that, given some necessary analytic background, the construction of perturbative branched minimal submanifolds
is remarkably simple. In particular, this argument turns out to be simpler than the
classical result of Caffarelli--Hardt--Simon \cite{CHS} regarding minimal perturbations of minimal cones, in the sense that it is unobstructed.  This suggests that,
somewhat unexpectedly, the local theory of branched minimal submanifolds might be somewhat simpler than that for isolated
singularities.  (On the other hand, the extension of \cite{CHS} to construct minimal perturbations of a product $C(Z) \times S^1$
of an embedded cone with a single simple circle turns out to require enormously more subtle analysis, see \cite{MP}.) 

However, all three of our results require the use of rather carefully crafted function spaces which reflect the
structure of the branching sets, and then knowledge of the precise mapping properties of the Laplacian on these function spaces.
Fortunately, this can be drawn from known literature in geometric microlocal analysis, and we have made efforts to explain some 
relevant parts of this analytic background which may be less familiar to some readers.   The second result (Theorem~\ref{thm:main}) 
is in this analytic-sense the most classical, i.e., the primary existence theorem can be carried out in standard spaces and using barrier arguments.
However, for all three main theorems, the methods we employ can be used to prove very sharp regularity statements for
the minimal submanifolds. These statements assert that the two-valued graph function $u$ admits a 
complete {\it polyhomogeneous} asymptotic expansion at the branching sets (see Definition ~\ref{def:polyhomog}).  This is a considerable refinement over
the more elementary gradient bounds, for example, and provides the sharp value of the frequency at the branch points with non-2-dimensional frequency. 

We conclude this introduction with a general remark that the existence of two-valued harmonic functions which decay to order
$3/2$ at their branching sets, or similarly the existence of two-valued functions satisfying the minimal surface equation
with this extra rate of decay, is quite delicate analytically.  It is akin to solving a free boundary problem where the branch points 
behave as a codimension two free boundary, where one imposes both Dirichlet and Neumann conditions.  The overdetermined
nature of this problem indicates that the location of the branching set is very constrained. By contrast, requiring only vanishing 
of order $1/2$, the location of this set would be essentially unconstrained.  The Taubes--Wu argument using
the group symmetry provides a vast simplification and short-cut to existence results.  It does not allow, however, for the construction of
branching sets which are more `non-linear'. We leave this as a challenge for the future.
 
\subsection{Acknowledgements} This research was conducted during the period PM served as a Clay Research Fellow at Stanford University.

\section{Perturbative Examples}\label{sec:perturbative}
In this section, we construct minimal submanifolds of any dimension $n\geq 2$ and codimension $k\geq 1$ which have a stratified branching set, i.e., the branch set is the union of a finite number of smooth submanifolds each having dimension $n-2$ or less, but is not itself a smooth submanifold. 
Our construction combines a symmetry argument due to Taubes--Wu \cite{TW20} with an updated version of the classical perturbative result of 
Caffarelli--Hardt--Simon \cite{CHS}.  

To illustrate and motivate our methods, we first review the original result from \cite{CHS}, or rather an extension of this result proved in \cite{Smale93}.
We take the opportunity to recast the proof slightly to identify clearly the necessary analytic tools. Later results in the paper rely on generalizations of these tools and will be described in due course.

In the next subsection, we briefly review the setting of conic spaces and conic elliptic operators and describe the salient facts. 
This then leads to a streamlined proof of the perturbative existence results from \cite{CHS, Smale93}. In the following subsection we review the theory of $\ZZ_2$-valued harmonic functions and the symmetry method of Taubes--Wu. The final subsection puts all of this together and we construct 
the perturbative families of minimal submanifolds with stratified branching sets.

\subsection{Elliptic conic theory and the Caffarelli--Hardt--Simon theorem}  
The setting of manifolds with isolated conic singularities is well-known. For simplicity, we describe only the class of conic \emph{embedded
sub}manifolds (rather than abstract Riemannian manifolds with conic singularities). 
Briefly, a \emph{conic submanifold} $X$ in an ambient Riemannian manifold $Z$ consists of a smoothly embedded submanifold
together with a finite collection of \emph{conic points} $p_1, \ldots, p_m$, such that in a neighborhood $\calU_\ell$ of each $p_\ell$
there exists a coordinate system $x$ centered at $p_\ell$, a standard warped product cone $C_\ell$ in these coordinates,
and a section $u_\ell$ of the normal bundle of $C_\ell$ which decays faster than linearly as $r = |x| \to 0$, such that $X \cap \calU_\ell$
is the graph of this normal section over $C_\ell$.   If $\dim Z = N$, we introduce polar (spherical) coordinates $x = r y$, $y \in S^{N-1}$,
and require that a warped product cone is a subset of the form $\{(r, y): 0 \leq r \leq 1,\ y \in \Sigma_\ell \subset S^{N-1}\}$. 
We require the graph function $u_\ell$ to decay like $r^\mu$ for some $\mu > 1$ and also impose certain estimates on at least a finite
number of its derivatives.  By virtue of this decay rate, the tangent cone to $X$ at $p_\ell$ is precisely the cone $C_\ell \subset 
\RR^N = T_{p_\ell} Z$ (extended to all $r \geq 0$). 

Now fix a compact $(k+1)$-dimensional conic minimal submanifold-with-boundary $X$ (where all the conic points lie in the interior). Minimality is with respect to
the given metric $g$ on the ambient space $Z$.
There are many examples, including the ones obtained by perturbation methods in \cite{CHS} and more obtained by bridging 
these together as in \cite{Smale, Smale93, Dimler}. More generally, this class includes all minimal submanifolds satisfying the hypotheses of the 
famous result of Simon \cite{Simon} concerning uniqueness of (multiplicity one) tangent cones with isolated singular points. 

Let us now examine the Jacobi operator of this minimal submanifold $X$ in any one of the neighborhoods $\calU_\ell$;
for simplicity we omit the index $\ell$. Suppose first that $C \subset \RR^N$ is an exact warped product minimal cone with vertex at $0$,
and write $\Sigma = C \cap S^{N-1}$.  This link $\Sigma$ is necessarily a (smooth) minimal submanifold of the sphere and, if $L_\Sigma$ denotes 
its corresponding Jacobi operator, then the Jacobi operator of $C$ in $\RR^N$ takes the form
\begin{equation}
L = \del_r^2 + \frac{k}{r} \del_r + \frac{1}{r^2}L_\Sigma
\label{JacobiX}
\end{equation}
This is an example of a \emph{conic elliptic operator}. The analytic properties of such operators are well understood.  We discuss first the natural function
spaces on which they act and then the salient Fredholm and regularity properties, specialized to this particular operator $L$. 

Observe that $r^2 L$ is an `elliptic combination' of the basic vector fields $r\del_r$ and $\del_{y_j}$ with smooth coefficients
where, as before, the $y_j$ are coordinates on $\Sigma$. These are called the \emph{$b$-vector fields}.  In the more general setting when $X$ 
is only asymptotically conic, the Jacobi operator takes this form to leading order near each of the conic points. In other words, 
$r^2 L$ can then be expressed as the sum of this leading order model part plus a (possibly second order) remainder term of the form
$ r \sum_{j,\beta}  a_{j \beta} (r\del_r)^j \del_y^\beta$ which vanishes to one order higher as $r\to 0$.

The key idea is that we consider function spaces and related constructions adapted to the particular degeneracies exhibited by these
$b$-vector fields. Thus, we first define the $b$\emph{-Hölder spaces}.  Noting that $r\del_r$ and $\del_{y_j}$ are invariant under radial 
dilation $(r,y)\mapsto (\lambda r,y)$, it is natural to use function spaces which are similarly dilation-invariant. To this end, decompose $C$ 
(and, more generally, $X \cap \calU_\ell$) into the union of (overlapping) dyadic annuli 
\[
X \setminus \{0\} = \bigcup^\infty_{j=1}A_j \qquad \text{where }A_j := \{(r,y):2^{-j-1}\leq r\leq 2^{-j+1}\}.
\]
Given any function $u$ on $X$, define the sequence of functions $u_j$ on $A_1$ by $u_j = D_{2^j} (u|_{A_j})$, where $D_\lambda$ is the dilation $D_\lambda u(r,y) :=
u(\lambda r, y)$.
\begin{defn}\label{bHolder}
The $b$\emph{-Hölder space} $\calC^{0,\alpha}_b(X)$ consists of the functions $u: X\to \R$ such that
\[
\|u\|_{b;0,\alpha} := \sup_{j\geq 1}\|u_j\|_{\calC^{0,\alpha}(A_1)} < \infty,
\]
Each summand on the right is the ordinary Hölder norm for functions on the fixed annulus $A_1$.

We then also define
\begin{itemize}
	\item $\calC^{\ell,\alpha}_b(X):= \{u:X\to \R\mid (r\del_r)^j\del_y^\beta u \in \calC^{0,\alpha}_b(X) \text{ for all }j+|\beta|\leq \ell\}$;
	\item $r^\mu \calC^{\ell,\alpha}_b(X) := \{r^\mu v: v\in \calC^{\ell,\alpha}_b(X)\}$;
	\item $r^\mu \calC^{\ell,\alpha}_{b,D}(X)$ for the closed subspace of $r^\mu \calC^{\ell,\alpha}_b(X)$ of functions which vanish at $r=1$.
\end{itemize}
These definitions are stated for scalar functions, but extend immediately to sections of the normal bundle $NX$.   The obvious norms on these spaces 
are denoted by $\|\cdot \|_{b; \ell, \alpha}$ and $\|\cdot \|_{b; \ell, \alpha, \mu}$, respectively, with $\|u\|_{b;\ell,\alpha,\mu}:= \|r^{-\mu}u\|_{b;\ell,\alpha}$. 

Finally, as noted earlier, these definitions extend immediately to functions or sections defined on any compact manifold with conic singularities; the norms 
above are used in neighborhoods of each conic point and ordinary H\"older norms are used elsewhere on $X$.
\end{defn}

It is clear from this definition that 
\begin{equation}
L: r^{\mu} \calC^{2,\alpha}_b(X) \longrightarrow r^{\mu-2} \calC^{0,\alpha}_b(X)
\label{map1}
\end{equation}
is bounded for any $\mu \in \RR$.  When supplemented by elliptic boundary conditions at $\del X$, this map is Fredholm for most, but not all, values of $\mu$. 
The exceptional values are the set of \emph{indicial roots} 
(defined below), and for these weights, \eqref{map1} does not have closed range.  This is roughly analogous to a well-known phenomenon in classical elliptic 
PDE that, for example on a closed manifold, $\Delta: \calC^{k+2} \to \calC^k$ does not have closed range, but closed range can be recaptured by moving to 
nearby H\"older spaces.

Before proceeding, we note that \eqref{bHolder} concerns scales of finite regularity, but leaves open the question of what it means for a function
on a conic space to have infinite regularity.  There are, in fact, two separate notions, one stronger than the other, called \emph{conormality}
and \emph{polyhomogeneity}, respectively. We review them now since they will appear frequently in this paper. 

Before doing so, we note that it is more convenient to work in the setting of manifolds with boundary or corners.  A conic space $X$ can be resolved
into a manifold-with-boundary by blowing up its vertices. Locally this corresponds to replacing a neighborhood of the vertex in the unit cone over $\Sigma$, written $C(\Sigma)$, by
the cylinder $[0, 1) \times \Sigma$.  Later we also encounter manifolds with edges, e.g., singularities such as those appearing in $C(\Sigma) \times \RR^N$,
and these too can be resolved into manifolds-with-boundary by blowing up the vertex of the conic factor before taking the product. We will also
encounter manifolds with iterated conic singularities and will explain at that point how these more complicated singularities can be resolved
to produce manifolds with corners. 

\begin{defn}\label{def:polyhomog}
Let $S$ be a manifold-with-boundary, and suppose that $(x,y)$ is a local coordinate system near a boundary point where $x \geq 0$ is
a boundary defining function.   A function $u = u(x,y)$ is said to be \emph{conormal at $\del S$} if $|(x\del_x)^j \del_y^\beta u| \leq C_{j \beta}$ for
every $j$ and $\beta$.  (This condition can be replaced by one where all of these derivatives lie in any fixed function space, e.g., $L^2$ or
$x^\delta L^2$ for some fixed $\delta$; the crucial aspect is that of {\it stable regularity} -- it doesn't get worse when differentiated with these particular
vector fields.) Clearly any conormal function is smooth in the interior of $S$.

Next, a conormal function $u$ is said to have a classical (or {\it polyhomogeneous}) expansion 
\[
u \sim \sum_j \sum_{p=0}^{N_j}  a_{j p}(y) x^{\gamma_j} (\log x)^p,
\]
where $\gamma_j$ is a sequence of complex numbers with real parts tending to $+\infty$ and for each $j$ there are at most finitely many log factors,
if the difference between $u$ and any partial sum decays as $x\to 0$ like the order of next term in the expansion, with the same being true for any derivative. This
implies that all coefficient functions $a_{j p} (y)$ are $\calC^\infty$.  

As already noted, in our applications below, the manifold-with-boundary $S$ is obtained from a manifold with conic (or edge) singularities by blowing up these
conic vertices. 

In our geometric applications below, the logarithmic factors never appear in the leading coefficients, so we systematically omit mention of them
for notational simplicity. Their presence does not affect any of the arguments.

Finally, suppose that $S$ is a manifold with corners.  This means that any point $p \in S$ has a neighborhood which is diffeomorphic to a neighborhood
of $0$ in $(\RR^+)^\ell \times \RR^{n-\ell}$. Given such a point, let $(x_1, \ldots, x_\ell, y)$ be a set of local coordinates, with $p$ corresponding to $0$.
We say that $u$ is conormal in this neighborhood if it has stable regularity with respect to the vector fields $x_i \del_{x_i}$ and $\del_{y_j}$, and it is polyhomogeneous
if it has a joint expansion near this corner of the form
\[
u(x,y) \sim \sum_{\beta,\vec{p}} a_{\beta, \vec{p}}(y) x^\beta (\log x)^{\vec p},
\]
where $\beta$ and $\vec{p}$ are multi-indices, with each $p_j \in \mathbb N$, and $(\log x)^{\vec p} = (\log x_1)^{p_1} \ldots (\log x_\ell)^{p_\ell}$.  (This is
called a \emph{product type expansion} at the corner.) 

A comprehensive reference for polyhomogeneity can be found in \cite[Appendix A]{Maz}.
\label{polyhom}
\end{defn}

\medskip

Now let us return to the main theme of this subsection.  In the conic manifold $X$, restrict to a neighborhood $\calU_\ell$ of a conic point and
replace $X$ by the exact cone $C$. Denote by $\{\phi_j,\lambda_j\}_{j\geq 1}$ the eigendata for $L_\Sigma$, where $\Sigma$ is the link of
this cone, with the convention that $\lambda_j\nearrow +\infty$. This decomposition extends over the entire cone and defines a reduction of
$L$ to the ordinary differential operators $L_j = \del_r^2 + (k/r)\del_r - \lambda_j/r^2$. There are two corresponding independent solutions,
obtained from the ansatz $u = r^\gamma \phi_j$  by noting that
$$
L(r^\gamma\phi_j) = (L_\Sigma + \gamma^2 + k\gamma)r^{\gamma-2}\phi_j.
$$
This vanishes precisely when $\gamma^2+k\gamma - \lambda_j = 0$, i.e., 
$$
\gamma = \gamma_j^\pm := -\frac{k}{2}\pm\sqrt{\frac{k^2}{4}+\lambda_j}.
$$
Thus any eigenfunction $\phi_j$ extends to a two-dimensional space of solutions $u$ to $Lu=0$ on $C$. 

The values $\gamma_j^\pm$ are called the \emph{indicial roots} of $L$ at this conic point. The functions $\{r^{\gamma_j^\pm}\phi_j\}_{j=1}^\infty$ 
span the entire space of solutions to the homogeneous equation $Lu=0$ on $C$.  Notice that these indicial roots are symmetric around 
$-k/2$, and if $\lambda_j>-k^2/4$ then all the indicial roots are real. For simplicity, we assume that $\lambda_j>-k^2/4$ for all $j$, which
thus divides the set of indicial roots clearly into the subset of those which are greater than $-k/2$ and those which are less than $-k/2$.
It is useful when phrasing the mapping properties below to divide the indicial roots into two disjoint sets, and this division is clear in this case.
However, if $\lambda_j < -k^2/4$ for some $j$, solutions take the form $r^{-k/2 \pm i \sigma}$ for some real $\sigma$, and if $\lambda_j = -k^2/4$, 
the solutions are $r^{-k/2}$ and $r^{-k/2}\log r$. There are only a finite number of these exceptional cases, and in any one of these, we specify one
solution out of each of these two-dimensional families. This is a minor issue and we leave details (and all necessary small restatements of
results below) to the reader.

Here are the two main analytic results. 
\begin{prop}[{\cite[Theorem 4.26]{Maz}}]\label{prop:fredholm}
Given the conic manifold $X$, let $\Gamma$ denote the union of the sets of indicial roots $\{\gamma_j^\pm\}$ over all conic points of $X$.  For simplicity, assume
all $\gamma_j^+$ lie in $(-k/2, +\infty)$. Then for any $\mu \in \RR \setminus \Gamma$, the map
\begin{equation}
L: r^\mu \calC^{\ell+2,\alpha}_{b,D}(X;NX)\longrightarrow r^{\mu-2}\calC^{\ell,\alpha}_b(X;NX)
\label{bmapD}
\end{equation}
is Fredholm.  When $X$ is the exact cone $C$, this map is injective when $\mu>\gamma_1^-$ and surjective when $\mu<\gamma_1^+$, and 
hence invertible when $\mu \in (\gamma_1^-,\gamma_1^+)$.   In general, its nullspace and cokernel are constant as $\mu$ varies on any 
interval $(a,b)$ disjoint from $\Gamma$, and its index is monotone non-increasing, with jumps precisely at the indicial weights. This map
does not have closed range when $\mu \in \Gamma$.   
\end{prop}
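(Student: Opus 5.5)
We prove the proposition in three stages: invert the exact model cone by separation of variables, build a global parametrix by patching, and read off the weight-dependence from the Mellin transform.

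\emph{The exact cone.} We use the eigendecomposition of $L_\Sigma$. Write $f\in r^{\mu-2}\calC^{\ell,\alpha}_b$ as $f=\sum_j f_j(r)\phi_j$. For each $j$ we must solve $L_j u_j = f_j$ with $u_j(1)=0$. The solutions of the homogeneous ODE are $r^{\gamma_j^\pm}$, so variation of parameters gives an explicit solution whose kernel involves $r^{\gamma_j^+}s^{\gamma_j^-}$ and $r^{\gamma_j^-}s^{\gamma_j^+}$.

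The weight dictates which integration limits to use. When $\gamma_j^-<\mu<\gamma_j^+$, integrate the $r^{\gamma_j^+}$ part from $0$ and the $r^{\gamma_j^-}$ part from $r$ to $1$. For $\mu>\gamma_j^+$, both parts are integrated from $0$. For $\mu<\gamma_j^-$, both are integrated toward $1$.

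A Schur-type estimate bounds these kernels in $r^\mu L^\infty$, uniformly in $j$, with constant $\lesssim \dist(\mu,\{\gamma_j^\pm\})^{-1}$. Since $\gamma_j^\pm\to\pm\infty$, this distance is bounded below uniformly in $j$ for $\mu\notin\Gamma$. The $b$-Hölder regularity then follows from the scale-invariant interior Schauder estimates applied on each dilated annulus $A_1$: if $Lu=f$, then
\[
\|u_j\|_{\calC^{2,\alpha}(A_1)}\lesssim \|f_j\|_{\calC^{0,\alpha}}+\|u_j\|_{L^\infty},
\]
where $u_j,f_j$ here denote the dilated pieces. Rescaling by $2^{-j\mu}$ yields the weighted estimate.

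This proves the cone statements. Injectivity for $\mu>\gamma_1^-$ holds because a kernel element is a sum of terms $a_jr^{\gamma_j^+}+b_jr^{\gamma_j^-}$ vanishing at $r=1$. Such a term is a nonzero multiple of $r^{\gamma_j^+}-r^{\gamma_j^-}$, and its $r^{\gamma_j^-}$ component is not $O(r^\mu)$. Surjectivity for $\mu<\gamma_1^+$ holds because the Dirichlet condition at $r=1$ can always be met using only the $r^{\gamma_j^-}$ solutions. The tail $j$ large is handled by the weights, so summing the modes, which requires care, is justified by the uniform bounds.

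\emph{The general conic $X$.} Near each conic point, $r^2L$ differs from the model by a term vanishing to order one in $r$. We patch the model inverse $G_0$ on $\{r<\epsilon\}$ with an interior Dirichlet parametrix away from the cone points, using cutoffs. This gives $LG=I-K$, where $K$ consists of commutator terms supported in compact annuli, which are compact by Arzelà–Ascoli, plus a perturbation term of norm $O(\epsilon)$. Choosing $\epsilon$ small makes $I-K$ Fredholm, so $L$ has a right parametrix modulo compact operators. A symmetric construction gives a left parametrix, and hence $L$ is Fredholm.

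\emph{Dependence on $\mu$ and failure of closed range.} For weights $\mu<\mu'$ in the same component of $\RR\setminus\Gamma$, elements of the nullspace have expansions in powers $r^{\gamma}$ with $\gamma\in\Gamma$, obtained via the Mellin transform and iterating the model inverse. Hence the nullspace and, by duality, the cokernel do not change across such an interval. Crossing an indicial root adds or removes the corresponding finite-dimensional space of model solutions, so the index is monotone non-increasing. At $\mu\in\Gamma$, we construct approximate solutions $\chi(r)\,r^{\gamma}\phi_j$ cut off on ever-longer logarithmic annuli. These have unit norm while their images tend to $0$, violating any estimate of the form $\|u\|\lesssim\|Lu\|+\|Ku\|$, so the range is not closed.

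The main obstacle is the uniform-in-$j$ estimate for the modal solution operators and its upgrade to Hölder norms, since summing infinitely many modes must be controlled.
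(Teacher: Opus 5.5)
Your strategy (explicit modal inversion on the model cone, scale-invariant Schauder estimates on dyadic annuli, then cutoff patching) is a legitimate and more elementary alternative to the $b$-pseudodifferential parametrix behind the theorem the paper quotes. Your later steps are standard and fine: the $O(\epsilon)$ perturbation plus compact commutator terms, the Mellin/iteration argument for the nullspace, and the logarithmic cutoffs $\chi\, r^{\gamma}\phi_j$ at indicial weights. But the exact-cone step, on which everything rests, is wrong as written in the range $\gamma_j^-<\mu<\gamma_j^+$. That range contains all but finitely many modes, and all modes when $\mu\in(\gamma_1^-,\gamma_1^+)$. With $D=r\del_r$ and $g_j=r^2f_j=O(r^\mu)$, one has $r^2L_j=(D-\gamma_j^+)(D-\gamma_j^-)$. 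Integrating the $r^{\gamma_j^+}$ term from $0$ gives $r^{\gamma_j^+}\int_0^r s^{-\gamma_j^+}g_j\,ds/s$, which diverges for, e.g., $g_j=s^\mu$, since $\mu<\gamma_j^+$. Integrating the $r^{\gamma_j^-}$ term over $(r,1)$ leaves a multiple of $r^{\gamma_j^-}$, which is not $O(r^\mu)$. The limits must be swapped:
\[
u_j(r)=-\frac{1}{\gamma_j^+-\gamma_j^-}\left[\int_r^1\Big(\frac{r}{s}\Big)^{\gamma_j^+}g_j(s)\,\frac{ds}{s}+\int_0^r\Big(\frac{r}{s}\Big)^{\gamma_j^-}g_j(s)\,\frac{ds}{s}\right],
\]
which satisfies $|u_j(r)|\le \frac{r^\mu\,\sup_s s^{-\mu}|g_j(s)|}{(\gamma_j^+-\mu)(\mu-\gamma_j^-)}$. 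The same swap breaks your surjectivity argument. Once $\mu>\gamma_j^-$, the solution $r^{\gamma_j^-}$ is not in $r^\mu\calC^{2,\alpha}_b$, as your own injectivity argument notes. So the Dirichlet condition must be restored by subtracting $u_j(1)\,r^{\gamma_j^+}$, and the hypothesis $\mu<\gamma_1^+$ is used precisely to ensure $r^{\gamma_j^+}\in r^\mu\calC^{2,\alpha}_b$ for every $j$.

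Second, uniform-in-$j$ modal bounds in $r^\mu L^\infty$ do not justify summing the modes. Eigenfunction expansions are not bounded on $L^\infty(\Sigma)$, so these bounds give no control of $\sum_j u_j\phi_j$ in $r^\mu L^\infty$, which is exactly the input your Schauder step requires. Here is what does work. Choose $J$ so that $\gamma_j^-<\mu<\gamma_j^+$ for all $j\ge J$. For such $j$, the kernels $(r/s)^{\gamma_j^+}\mathbf{1}_{s>r}$ and $(r/s)^{\gamma_j^-}\mathbf{1}_{s<r}$, the correction kernel $r^{\gamma_j^+}s^{-\gamma_j^-}$, and the prefactor $(\gamma_j^+-\gamma_j^-)^{-1}$ are all dominated by their $j=J$ versions. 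Minkowski's integral inequality then gives $\sup_r r^{-\mu}\|u(r,\cdot)\|_{L^2(\Sigma)}\lesssim\sup_r r^{2-\mu}\|f(r,\cdot)\|_{L^2(\Sigma)}$. The finitely many remaining modes are handled individually. The scale-invariant interior and boundary Schauder estimates on enlarged annuli, with an $L^2$ rather than $L^\infty$ lower-order term, then upgrade this to $r^\mu\calC^{2,\alpha}_b$. Finally, the index jump on a general $X$ is the relative index theorem, which your sketch only asserts. It needs an excision argument reducing to the exact cone, where your modal count gives index $\#\{j:\gamma_j^->\mu\}-\#\{j:\gamma_j^+<\mu\}$, counted with multiplicity.
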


\begin{prop}[{\cite[Corollary 4.19]{Maz}}]\label{prop:reg}
With all notation as above, if $u \in r^\mu \calC^{2,\alpha}_b$ and $Lu = 0$, or more generally if $Lu = f$ is supported away from the conic points 
then, near each $p_\ell$, $u$ admits a polyhomogeneous asymptotic expansion of the form
\[
u \sim \sum_{j} \sum_{m=0}^\infty  c_{j,m} r^{\gamma_j + m} \phi_j(y),
\]
where the $\gamma_j$ lie in the set of all indicial roots which are greater than $\mu$ (so in particular, at most a finite number of the
$\gamma_j^-$ appear), and the extra factors $r^m$ are included because for a metric which is only asymptotically conic, one requires extra correction 
terms to make this series solve the equation formally.  The coefficients $c_{j,m}$ are constants. 
\end{prop}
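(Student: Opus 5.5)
We prove Proposition~\ref{prop:reg} (polyhomogeneity of solutions of $Lu=f$, with $f$ supported away from the conic points) by the standard two-stage \emph{conormality, then iterated indicial expansion} argument; everything is local, so we work in a single neighborhood $\calU_\ell$, identified with $(0,1)\times\Sigma$ in polar coordinates.

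\textbf{Step 1: conormality.} Write $r^2L = P_0 + rE$, where
\[
P_0=(r\del_r)^2+(k-1)r\del_r+L_\Sigma
\]
is dilation invariant and $E$ is a second order $b$-operator with smooth coefficients. Fix a cutoff $\chi$ supported near $r=0$ with $f=0$ on its support. Then
\[
L(\chi u)=[L,\chi]u,
\]
and $[L,\chi]u$ is supported in a fixed annulus.

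On each dyadic annulus, apply standard interior Schauder estimates to the rescaled functions $u_j$. The rescaled operators $r^2L$ are uniformly elliptic with uniformly bounded coefficients in the $b$-vector fields. Bootstrapping therefore gives
\[
(r\del_r)^i\del_y^\beta u\in r^\mu\calC^{0,\alpha}_b \quad \text{for all } i,\beta.
\]
This is conormality, with bounds uniform in $j$.

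\textbf{Step 2: one step of the expansion.} Rewrite the equation near $0$ as
\[
P_0 u=-rEu=:g\in r^{\mu+1}\calC^{\infty}_b.
\]
Expand in the eigenbasis $\phi_j$ of $L_\Sigma$. For each $j$ we get the Euler-type ODE
\[
\bigl((r\del_r)^2+(k-1)r\del_r-\lambda_j\bigr)u_j=g_j .
\]
We solve it by variation of parameters, or equivalently by Mellin transform in $r$. The solution is then
\[
u_j=a_jr^{\gamma_j^+}+b_jr^{\gamma_j^-}+(\text{particular solution}),
\]
where the particular solution lies in $r^{\mu+1}\calC^\infty_b$, up to a $\log$ if $\mu+1$ hits an indicial root.

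Since $u\in r^\mu\calC_b$, only the homogeneous terms with $\gamma_j^\pm>\mu$ survive. For $j$ large we have $\gamma_j^->\mu$ false, and indeed $\gamma_j^-\to-\infty$, so only finitely many $\gamma_j^-$ appear. The sum over $j$ of the remaining pieces must be controlled uniformly. Use the Mellin inversion formula: shift the contour from $\mathrm{Re}\,s=\mu$ to $\mathrm{Re}\,s=\mu+1$, picking up residues at the indicial roots in between. The resolvent $P_0(s)^{-1}=(s^2+(k-1)s+L_\Sigma)^{-1}$ is meromorphic with poles exactly at the indicial roots, and it is bounded on Hölder/Sobolev spaces on $\Sigma$ uniformly for $|\mathrm{Im}\,s|$ large. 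This uniform bound is elliptic parameter-dependent theory on the compact manifold $\Sigma$.

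The result is
\[
u=\sum_{\mu<\gamma<\mu+1}c_\gamma r^{\gamma}\phi_\gamma+u',\qquad u'\in r^{\mu+1-\eps}\calC^\infty_b .
\]

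\textbf{Step 3: iteration.} Substitute this decomposition back into $g=-rEu$. The terms $r\cdot E(r^\gamma\phi)$ are themselves polyhomogeneous, with exponents $\gamma+1$ after Taylor-expanding the coefficients of $E$. Repeating Step 2 with the improved weight gains one order each time. Each round produces the new indicial terms $r^{\gamma_j}\phi_j$ together with the correction terms $r^{\gamma_j+m}$, which are forced by $E$. Induction then gives the full expansion.

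The coefficients attached to $r^{\gamma_j+m}$ are determined by the lower-order data, so they need not be pure multiples of $\phi_j$. They are smooth on $\Sigma$, and they reduce to constants times $\phi_j$ in the exact-cone case. Log terms arise only when $\gamma_j+m$ coincides with another indicial root, and are absorbed as in the convention of Definition~\ref{def:polyhomog}.

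\textbf{Main obstacle.} We expect the hardest part to be the uniform control in Step 2: justifying the Mellin contour shift in weighted $b$-Hölder spaces, which needs uniform resolvent bounds on vertical lines. Our first attempt would be to do the contour shift in $L^2_b$-based weighted Sobolev spaces, where Plancherel makes it clean, and then return to Hölder regularity via the conormality from Step 1 and Sobolev embedding on dyadic annuli.
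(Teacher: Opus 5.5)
Your proposal is correct, but it does not follow the argument the paper relies on. The paper gives no proof of its own and cites \cite[Corollary 4.19]{Maz}, where regularity comes out of the parametrix construction in the $b$/edge calculus. There one builds $G$ with $GL = I - R$, with $R$ residual and having a polyhomogeneous Schwartz kernel whose left-face index set is generated by the indicial roots. Then $u = Gf + Ru$, and the expansion is read off from the structure of these kernels.

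You argue directly instead, in three steps:
\begin{itemize}
\item conormality from Schauder estimates on rescaled dyadic annuli;
\item a Mellin transform, with meromorphic continuation of $P_0(s)^{-1}\hat g(s)$ and contour shifting, justified by parameter-dependent ellipticity of $s^2+(k-1)s+L_\Sigma$ on the compact link;
\item iteration against the perturbation $rE$.
\end{itemize}

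Your route is elementary and self-contained for isolated conic points, and it shows exactly where exponents and log terms arise. The parametrix route gives the Fredholm statements of Proposition~\ref{prop:fredholm} and the regularity together. It is also what carries over to the edge and iterated-edge settings of Propositions~\ref{poissonedge} and~\ref{polyprop}. There, scaled Schauder estimates only control derivatives like $\rho\del_z$ rather than $\del_z$, so conormality needs extra tangential-regularity input, and the model operator is no longer a pure Mellin multiplier on a compact cross-section.

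Two remarks on what you actually prove.

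First, you are right that for $m \geq 1$ the coefficient of $r^{\gamma_j+m}$ is a smooth function on $\Sigma$, generally not a constant multiple of $\phi_j$. For instance, a zeroth-order term $a(y)$ in $E$ forces a correction proportional to $P_0(\gamma_j+1)^{-1}(a\phi_j)$, which has components along many eigenfunctions. So the statement with constants $c_{j,m}$ is literally true only for the exact cone. Your version is the correct one, and it matches Definition~\ref{def:polyhomog}.

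Second, the Mellin transform of $\chi u$ is only known to be holomorphic for $\mathrm{Re}\, s < \mu$, so the initial contour should be $\mathrm{Re}\, s = \mu - \eps$. This is harmless for non-indicial $\mu$, which is the case used in the paper. If $\mu$ is itself an indicial root, the term $r^{\mu}$ can genuinely occur (take $\chi r^{\mu}\phi_j$ on the exact cone), and ``greater than $\mu$'' should read ``at least $\mu$''.
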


In our main geometric application, we must assume that the weight parameter satisfies $\mu > 1$. This is because we consider
normal graphs over $X$ by functions decaying like $r^\mu$ and this restriction ensures that the tangent cones of these normal graphs coincide
with the tangent cones of $X$ at these conic points.   Observe, however, that the value $1$ always appears in the set $\Gamma$. 
Indeed, the infinitesimal variations of one-parameter families of minimal solutions are solutions of the Jacobi equation, and the
`obvious' family of non-linear deformations obtained by rotating $C$ about its vertex gives rise to the indicial root $1$.

This highlights the basic tension in this problem.  It is necessary for geometric reasons to work on spaces with weights $\mu > 1$, but 
$L$ usually has non-trivial cokernel on such spaces.  In other words, the geometric deformation problem for embedded minimal submanifolds
is always obstructed.

Motivated by this and using the above two results, we prove the following.
\begin{prop}\label{nullspace}
Given any $\mu > 1$, there exists a subspace $W = W_\mu$ of finite codimension in $\calC^{2,\alpha}(\del X)$ such 
that there exists a solution to $Lu = 0$ on $X$ with $u \in r^\mu \calC^{2,\alpha}_b(X)$ and $u|_{\del X} = \phi$ if and only if $\phi \in W$. 
\end{prop}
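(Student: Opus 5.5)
The strategy is to reduce the inhomogeneous boundary value problem to the Fredholm map \eqref{bmapD} from Proposition~\ref{prop:fredholm}, and then read off $W$ as the preimage of a finite-codimensional range condition. We may assume $\mu\notin\Gamma$. If $\mu\in\Gamma$, replace it by a slightly larger non-indicial $\mu'$ and use Proposition~\ref{prop:reg}: a solution in $r^\mu\calC^{2,\alpha}_b$ has an expansion whose terms have exponents strictly greater than $\mu$, so it automatically lies in $r^{\mu'}\calC^{2,\alpha}_b$ for $\mu'$ close to $\mu$. Hence $W_\mu=W_{\mu'}$. (The statement uses $\calC^{2,\alpha}(\del X)$ data, so we take $\ell=0$ in \eqref{bmapD}.)

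\textbf{Step 1: extension.} Given $\phi\in\calC^{2,\alpha}(\del X)$, choose a bounded linear extension $E\phi\in\calC^{2,\alpha}(X)$ supported in a collar of $\del X$, away from all conic points, with $E\phi|_{\del X}=\phi$. Then $E\phi$ lies in every weighted space $r^\mu\calC^{2,\alpha}_b$, and $f_\phi:=-L(E\phi)$ is compactly supported away from the cone points. In particular $f_\phi\in r^{\mu-2}\calC^{0,\alpha}_b(X)$, and $\phi\mapsto f_\phi$ is bounded and linear.

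\textbf{Step 2: reduction to the range.} A solution $u\in r^\mu\calC^{2,\alpha}_b$ with $Lu=0$ and $u|_{\del X}=\phi$ exists if and only if $v:=u-E\phi\in r^\mu\calC^{2,\alpha}_{b,D}$ solves $Lv=f_\phi$. Thus the condition is exactly $f_\phi\in\operatorname{Ran}(L_\mu)$, where $L_\mu$ is the map \eqref{bmapD}. By Proposition~\ref{prop:fredholm}, $L_\mu$ is Fredholm, so its range $R$ is closed of finite codimension $d$. Choose continuous functionals $\ell_1,\dots,\ell_d$ on $r^{\mu-2}\calC^{0,\alpha}_b$ whose common kernel is $R$; these exist by Hahn--Banach, since $R$ is closed of finite codimension. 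Then define
\[
W:=\{\phi:\ \ell_i(f_\phi)=0,\ i=1,\dots,d\}.
\]
This is the common kernel of $d$ continuous functionals $\phi\mapsto\ell_i(f_\phi)$ on $\calC^{2,\alpha}(\del X)$, so it is a closed subspace of codimension at most $d$.

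\textbf{Step 3: independence of the extension.} $W$ does not depend on the choice of $E$. Two extensions differ by a function $w\in r^\mu\calC^{2,\alpha}_{b,D}$, and $L w\in R$ trivially, so the condition $f_\phi\in R$ is unchanged.

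\textbf{Main obstacle.} The only genuine input is that \eqref{bmapD} is Fredholm with closed range for the given weight. This is supplied by Proposition~\ref{prop:fredholm}, together with the regularity remark that disposes of indicial $\mu$. That reduction is the step requiring care: one must check that Proposition~\ref{prop:reg} applies, which it does since $Lu=0$. One could also identify the cokernel concretely. By duality it is spanned by solutions of $L\psi=0$ with $\psi|_{\del X}=0$ in the dual weight, whose expansions involve the roots $\gamma_j^-$, and the constraints $\ell_i(f_\phi)$ become boundary pairings $\int_{\del X}\phi\,\del_\nu\psi_i$. This explicit form is not needed for the statement.
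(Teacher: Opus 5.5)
Your core argument is correct for non-indicial $\mu$, but the reduction you use for indicial $\mu$ is false. If $\mu=\gamma_j^+$, a solution in $r^\mu\calC^{2,\alpha}_b(X)$ can have leading term exactly $r^{\mu}\phi_j$. On an exact cone, $u=r^{\gamma_j^+}\phi_j$ solves $Lu=0$, lies in $r^\mu\calC^{2,\alpha}_b$, and has boundary value $\phi_j$. For non-indicial $\mu'>\mu$, however, $W_{\mu'}=(I-\Pi_{J'})\calC^{2,\alpha}(\Sigma)$ with $j\le J'$, so $\phi_j\notin W_{\mu'}$. So the exponents in the expansion are only $\ge\mu$; the phrase ``greater than $\mu$'' in Proposition~\ref{prop:reg} is accurate only for non-indicial $\mu$. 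Hence $W_\mu\neq W_{\mu'}$ in general. The repair needs no regularity theory. Since $r^{\mu'}\calC^{2,\alpha}_b\subset r^{\mu}\calC^{2,\alpha}_b$ for $\mu'>\mu$, you have $W_{\mu'}\subseteq W_\mu$. Any subspace containing a closed finite-codimensional subspace is itself closed and of finite codimension, which is all the statement requires.

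For non-indicial $\mu$ your route differs from the paper's. The paper extends $\phi$ and passes to a lower weight $\mu'$ at which the Dirichlet map \eqref{bmapD} is surjective. It solves with a right inverse $G$, then uses the polyhomogeneous expansion of Proposition~\ref{prop:reg} to define finitely many continuous coefficient functionals $T_{j,m}$ with $\gamma_j+m<\mu$. It takes $W$ to be the intersection of their kernels. You instead stay at the weight $\mu$ itself and use only that the range of \eqref{bmapD} is closed of finite codimension. Then $W$ is the preimage of that range under the bounded map $\phi\mapsto f_\phi$.

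The paper's version gives explicit constraints in terms of expansion coefficients, matching $W=(I-\Pi_J)\calC^{2,\alpha}(\Sigma)$ for exact cones, at the cost of invoking the regularity theory. Yours is more economical, and its ``if and only if'' is exact as stated. By contrast, the intersection-of-kernels description is exact only if elements of the Dirichlet kernel at the auxiliary weight $\mu'$, which may be added to $Gf$, have vanishing low-order coefficients.
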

\begin{proof}
Extend any $\phi \in \calC^{2,\alpha}(\del X)$ to a function $\tilde{\phi} \in \calC^{2,\alpha}_b(X)$ supported near $\del X$ and
which equals $\phi$ at the boundary. Write $f = L \tilde{\phi}$, and then select a value $\mu'$ so that \eqref{bmapD} is surjective.
Choosing a right inverse $G$ for this map, $w  = Gf \in r^{\mu'}\calC^{2,\alpha}_b$ with $Lw = f$, then by Proposition~\ref{prop:reg}, 
\[
w \sim \sum_{\gamma_j > \mu'}  c_{j,m} r^{\gamma_j + m}\phi_j(y).
\]
Each of the boundary trace maps $T_{j,m}: f \mapsto w = Gf \mapsto c_{j,m}$ is continuous, and $W$ is the intersection of the kernels of
$T_{j,m}$ over all such pairs with $\gamma_j + m < \mu$. 
\end{proof}
Smale \cite{Smale93} proves this slightly differently using Green's theorem; that method identifies $W$ as the annihilator
of the set of normal derivatives of elements of the kernel of $L$ in $r^{\mu'}\calC^{2,\alpha}_b$.

When $X$ is an exact cone, there is an explicit identification of $W$. Namely, for each $j$, in order for at least one of the two solutions
$r^{\gamma_j^\pm} \phi_j(y)$ to lie in $r^\mu \calC^{2,\alpha}_b(C)$, we need $\gamma_j^+ \geq \mu$.
Thus if we choose $J$ to be the unique integer such that $\gamma_J^+ < \mu < \gamma_{J+1}^+$, and if $\Pi_J$ is the finite rank $L^2$ orthogonal
projection of $\calC^{2,\alpha}(\Sigma)$ onto the span of the $\phi_j$ with $j \leq J$, then $W = (I - \Pi_J) \calC^{2,\alpha}(\Sigma)$.
In general, we write $\Pi_W$ for the $L^2$ orthogonal projector, which maps $\calC^{2,\alpha}(\Sigma)$ onto $W$. 

In any case, given $\mu > 1$, we can define the Poisson operator
\begin{equation}
\mathcal{P}: W_\mu \longrightarrow r^\mu \calC^{2,\alpha}_b(X) \cap \ker L
\label{Poissonopb}
\end{equation}
by setting $u = \calP(\phi)$ to be the unique solution of $Lu = 0$ with $u|_{\del X} = \phi$. 

We need one further preparatory result.
\begin{prop}\label{compker}
The (infinite dimensional) kernel of $L$ in $ r^\mu \calC^{2,\alpha}_b(X; NX)$ is complemented; in other words, there exists a closed subspace $E$ such that
\[
r^\mu \calC^{2,\alpha}_b(X; NX) = \ker L \cap r^\mu \calC^{2,\alpha}_b(X; NX) \oplus E.
\]
\end{prop}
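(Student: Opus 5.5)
The plan is to build the complement explicitly, using the Dirichlet problem. The natural candidate is
\[
E := r^\mu \calC^{2,\alpha}_{b,D}(X;NX),
\]
the closed subspace of functions vanishing at $\del X$. This works only when $L$ is injective on $E$, which is the case when $X$ is an exact cone and $\mu>\gamma_1^-$ (Proposition~\ref{prop:fredholm}); since $\mu>1>\gamma_1^-$ this always holds for cones. In general the Dirichlet map \eqref{bmapD} has a finite-dimensional nullspace $K\subset E$. I would then take $E$ to be a closed complement of $K$ inside $r^\mu\calC^{2,\alpha}_{b,D}$, which exists because $K$ is finite-dimensional, and add $K$ back through the decomposition below.

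The key steps are as follows.

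\textbf{(1) The range of $L$ on the full space.} Let $R := L\big(r^\mu\calC^{2,\alpha}_{b,D}\big)$, which is closed and of finite codimension by Proposition~\ref{prop:fredholm}. I claim that $L$ applied to the full space $r^\mu\calC^{2,\alpha}_b(X)$ has the same range $R$. Given $u$ in the full space, extend its boundary trace to a function $\tilde\phi$ supported near $\del X$, exactly as in the proof of Proposition~\ref{nullspace}. Then $u-\tilde\phi$ vanishes on $\del X$, so it lies in the Dirichlet space, and $L\tilde\phi$ is smooth and compactly supported away from the cone points. The range of $Lu = L(u-\tilde\phi)+L\tilde\phi$ is therefore $R+F$, where $F$ is spanned by the images $L\tilde\phi$. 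This sum is closed, because $R$ is closed of finite codimension. I do not need it to equal $R$; I only need a bounded right inverse for $L$ onto its range.

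\textbf{(2) A bounded right inverse.} Choose a bounded right inverse $G_0\colon R\to E$ of $L|_E$, which exists because $L|_E$ is an isomorphism of Banach spaces onto the closed set $R$. Also choose finitely many elements $v_1,\dots,v_m$ of the full space whose images $Lv_i$ span a complement of $R$ inside $R+F$; this is possible since $R+F$ modulo $R$ is finite-dimensional. Define
\[
G\colon R\oplus\operatorname{span}\{Lv_i\}\to r^\mu\calC^{2,\alpha}_b, \qquad G = G_0\oplus\big(Lv_i\mapsto v_i\big).
\]
Then $P := GL$ is a bounded projection on $r^\mu\calC^{2,\alpha}_b(X;NX)$ with $\ker P=\ker L$. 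The required complement is therefore
\[
E' := \operatorname{ran} P = E\oplus\operatorname{span}\{v_i\},
\]
which is closed. This gives $r^\mu\calC^{2,\alpha}_b = \ker L\oplus E'$.

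\textbf{The main obstacle.} The delicate point is step (1): the full space $r^\mu\calC^{2,\alpha}_b$ is not one of the spaces in Proposition~\ref{prop:fredholm}, so I must check that its image under $L$ is closed. The decomposition $u = (u-\tilde\phi)+\tilde\phi$ settles this, but it requires the extension $\phi\mapsto\tilde\phi$ to be a bounded linear operator. I would take it to be a standard linear extension from a collar of $\del X$, cut off away from the cone points. Such an extension has $\calC^{2,\alpha}$ regularity and vanishes near the cone points, so $\tilde\phi$ lies in every weighted space.

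If the images $L\tilde\phi$ fail to be finite-dimensional modulo $R$, the argument still goes through. In that case I would use the simpler splitting
\[
r^\mu\calC^{2,\alpha}_b = r^\mu\calC^{2,\alpha}_{b,D}\oplus\{\tilde\phi\},
\]
since both summands are closed and the trace-extension map is bounded. I would then define $P$ directly by
\[
P u := u-\mathcal P(\Pi_W\phi) - \big(\text{correction in }K\big),
\]
where $\phi = u|_{\del X}$. This uses the Poisson operator \eqref{Poissonopb} together with boundedness of $\Pi_W$ and of $\mathcal P$; the latter follows from bounded $G$ in Proposition~\ref{nullspace}. Here $\ker L$ is identified, up to the finite-dimensional $K$, with $\mathcal P(W_\mu)$, and its complement is $E\oplus(\text{a complement of }W_\mu)$.
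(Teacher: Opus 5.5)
Your argument is correct, but it takes a different route from the paper.

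\textbf{The paper's route.} The paper takes $\Pi_{\mathrm{Ker}}$ to be the orthogonal projector onto the kernel in the weighted space $r^{\mu-(k-1)/2}L^2$. It then asserts, by analysing its Schwartz kernel with $b$-calculus methods, that this projector is bounded on $r^\mu\calC^{2,\alpha}_b$, and sets $E=\mathrm{ran}(I-\Pi_{\mathrm{Ker}})$.

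\textbf{Your route.} You use only the Fredholm property of the Dirichlet problem (Proposition~\ref{prop:fredholm}) and the open mapping theorem. Since $R$ is closed of finite codimension in $r^{\mu-2}\calC^{0,\alpha}_b$, the range of $L$ on the full space is closed. So $L$ has a bounded right inverse $G$ on that range, and $GL$ is a bounded projection with kernel $\ker L$. This is the general fact that an operator with closed range has complemented kernel exactly when it has a bounded right inverse on its range.

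\textbf{Comparison.} Your proof is self-contained. It avoids the nontrivial claim, only asserted in the paper, that an $L^2$ projector onto an infinite-dimensional kernel is bounded on weighted H\"older spaces. The paper's route instead yields a canonical ($L^2$-orthogonal) complement. Yours yields a concrete one: a complement of $K$ in the Dirichlet space, plus a finite-dimensional piece mapping onto the cokernel of the Dirichlet problem.

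\textbf{Two minor remarks.}
\begin{itemize}
\item[(i)] You implicitly need $\mu$ non-indicial so that $R$ is closed. This is the standing assumption wherever the proposition is used (Theorem~\ref{CHS}), and the paper's argument needs it too.
\item[(ii)] The trace/extension step, your worry about boundedness of $\phi\mapsto\tilde\phi$, and the whole final fallback paragraph are superfluous. Any subspace of the target containing $R$ is automatically closed and finite-dimensional modulo $R$; by Proposition~\ref{nonDbsurj} it is in fact the whole target. So the case you guard against cannot occur, and you can drop that paragraph.
\end{itemize}
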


The abbreviated proof of this result is that if $\Pi_{\mathrm{Ker}}$ denotes the orthogonal projection to this kernel in the weighted
$L^2$ space $r^{\mu - (k-1)/2} L^2$, then it can be shown that this same operator (or rather, its Schwartz kernel) defines a bounded
map on $r^\mu \calC^{2,\alpha}_b$, and we then define $E$ to be the image of $I - \Pi_{\mathrm{Ker}}$ in this weighted H\"older space.

We make one small emendation of this result, by noting that we can choose $E$ so that if $u \in E$, then $\Pi_W (u|_{r=1}) = 0$.
Indeed, there is a continuous restriction map $\mathcal R$ from $E$ to $\calC^{2,\alpha}(\del X)$, and we can then replace $E$ by
\[
(I - \calP \circ \Pi_W \circ \mathcal R) (E).
\]

To explain this more carefully, we take a small detour. One may work with weighted $b$\emph{-Sobolev spaces} $r^\nu H^k_b$ instead
of weighted $b$-H\"older spaces. The definitions of these are what one would expect, namely $u \in H^k_b$ if $(r\del_r)^j \del_y^\beta u \in 
L^2$ for $j + |\beta| \leq k$. Although Sobolev regularity is not strictly comparable to H\"older regularity, we say that $r^\mu \calC^{\ell_1,\alpha}_b$ 
(for any $\ell_1, \alpha$) is {\it commensurate with} $r^{\nu} H^{\ell_2}_b$ (again for any other $\ell_2$) if $\mu = \nu + (k-1)/2$.  
This is motivated by the fact that $r^\mu \in r^\nu L^2$ locally near $r=0$ if and only if $\mu > \nu + (k-1)/2$.  All of the analytic results 
above are obtained using the theory of $b$-pseudodifferential
operators. We say more about all this below, but for the moment we record a basic fact of this theory:  a $b$-operator is bounded on a given weighted 
H\"older space $r^\mu \calC^{k,\alpha}_b$ (with appropriate gain or loss of $b$-regularity) if and only if it is also bounded on the commensurate 
weighted Sobolev space $r^{\nu} H^k_b$, $\nu = \mu - (k-1)/2$.  This is not a difficult fact, and follows from the description of 
$b$-pseudodifferential operators in terms of the pointwise behavior of their distributional Schwartz kernels.  

This observation about the boundedness of $b$-pseudodifferential operators has another important consequence.  Suppose we have
proved that $L$ is injective on $r^\mu \calC^{2,\alpha}_b$ for some non-indicial weight $\mu$. A duality argument can be used to prove that 
$L: r^{\mu'}\calC^{2,\alpha}_b \to r^{\mu'-2}\calC^{0,\alpha}_b$ is surjective, where $\mu'$ is a weight {\it dual} to $\mu$. 
Of course, duality is not meant literally since we are using H\"older spaces, but we argue as follows. The injectivity of $L$ on the first space 
implies that there exists an operator $G: r^{\mu-2}\calC^{0,\alpha}_b \to r^\mu \calC^{2,\alpha}_b$ such that $G \circ L = I$.  A priori,
this operator $G$ is only defined abstractly, by functional analysis, but it can be shown that it is actually a $b$-pseudodifferential operator. 
Using the `transferability of boundedness'  explained above, we obtain  that $G: r^{\nu-2} L^2 \to r^{\nu}H^2_b$ for $\nu = \mu - (k-1)/2$,
so in particular, $G \circ L = I$ still holds, first as a statement about distributional Schwartz kernels, but then in an operator-theoretic sense. 
In other words, $L$ is injective on $r^\nu H^2_b$. 

Now, if $u \in r^\nu H^2_b$ and $v \in r^{\nu'} H^2_b$, then both sides of the unweighted $L^2$ pairing
\[
\langle Lu, v \rangle = \langle u, Lv \rangle
\]
make sense precisely when $\nu' = - (\nu-2)$. Thus $\nu' = 2-\nu$ is the dual $L^2$ weight, and reinserting the shift above,
the H\"older weight $\nu' = \mu' - (k-1)/2 = 2 - (\mu - (k-1)/2)$, or
\[
\mu' = k+1 - \mu
\]
is the dual H\"older weight.  By this surjectivity, there is an operator $G^t: r^{\nu'-2}L^2 \to r^{\nu'}H^2_b$, and using all the remarks
above, this same operator defines a bounded map $r^{\mu'-2} \calC^{0,\alpha}_b \to r^{\mu'} \calC^{2,\alpha}_b$ which satisfies 
$L G^t = I$.  This shows that $L$ is surjective on the dual weighted space as claimed! 

We also have the following crucial surjectivity statement.
\begin{prop}
If $\mu$ is any non-indicial weight, then the mapping
\begin{equation}
L:  r^{\mu} \calC^{2,\alpha}_b(X; NX) \longrightarrow r^{\mu-2} \calC^{0,\alpha}_b(X; NX)
\label{noD}
\end{equation}
is surjective.
\label{nonDbsurj}
\end{prop}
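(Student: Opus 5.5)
The plan is to use the fact that, with no boundary condition imposed at $\del X$, the boundary values are free and can absorb any obstruction. Fix a non-indicial $\mu$ and $f \in r^{\mu-2}\calC^{0,\alpha}_b(X;NX)$. Proposition~\ref{prop:fredholm} supplies a weight $\mu' < \mu$, also non-indicial, for which the Dirichlet map \eqref{bmapD} is surjective. Note that $r^{\mu-2}\calC^{0,\alpha}_b \subset r^{\mu'-2}\calC^{0,\alpha}_b$ since $\mu' < \mu$. For the exact cone one may take any $\mu' < \gamma_1^+$; in general I expect to use the monotonicity of the index together with the duality argument just described, which turns injectivity at a large weight into surjectivity at the dual weight $k+1-\mu$. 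This gives $w \in r^{\mu'}\calC^{2,\alpha}_{b,D}$ with $Lw = f$.

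The next step is to recover the decay at $r = 0$. Cut off with a function $\chi$ equal to $1$ near the conic points and supported in the neighborhoods $\calU_\ell$. Then $L(\chi w) = \chi f + [L,\chi]w$, where the commutator term is supported away from the conic points. Apply the $b$-parametrix theory behind Proposition~\ref{prop:reg}, which is the inhomogeneous version: if $Lw = f$ with $f \in r^{\mu-2}\calC^{0,\alpha}_b$ and $w \in r^{\mu'}\calC^{2,\alpha}_b$, then
\[
w = \sum_{\mu' < \gamma_j + m < \mu} c_{j,m}\, r^{\gamma_j+m}\phi_j(y)\chi + \tilde w, \qquad \tilde w \in r^{\mu}\calC^{2,\alpha}_b .
\]
Here only finitely many terms appear, because $\mu$ is non-indicial and the exponents $\gamma_j + m$ are discrete.

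The finitely many obstruction terms are then removed by adding homogeneous solutions. For each exponent pair $(j,m)$ in the sum, the obstruction is the local term $r^{\gamma_j+m}\phi_j$ near the conic point. This term satisfies $L(\chi r^{\gamma_j+m}\phi_j) = g_{j,m}$ modulo the lower-order correction terms, which lie in $r^{\mu-2}\calC^{0,\alpha}_b$ once the series has been formally corrected up to order $\mu$ (this is exactly how the $r^m$ corrections in Proposition~\ref{prop:reg} arise). So set
\[
u = w - \sum c_{j,m}\, \chi\, \psi_{j,m},
\]
where $\psi_{j,m}$ is the formal solution truncated at order $\mu$. Then $u = \tilde w$ lies in $r^\mu \calC^{2,\alpha}_b$, and $f - Lu$ is a function $h$ that lies in $r^{\mu-2}\calC^{0,\alpha}_b$, since $\chi\psi_{j,m}$ is an approximate solution. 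This step reduces the problem by an error in the right space, but it does not yet finish, because $h \neq 0$.

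To close the argument, I would avoid iterating and argue directly by Fredholm theory. The map \eqref{noD} is the Dirichlet map \eqref{bmapD} at weight $\mu$ plus a finite-dimensional enlargement of the domain, but since the boundary trace is unrestricted, the domain is really enlarged by $\calP$-type extensions of all of $\calC^{2,\alpha}(\del X)$. The cokernel $Q$ of \eqref{bmapD} at weight $\mu$ is finite dimensional; by the duality discussion it is identified with $\ker L$ at the dual weight $\mu^* = k+1-\mu$ under the Dirichlet condition. It therefore suffices to show that for each nonzero $v$ in this kernel, there is a $\psi$ supported near $\del X$ with $\langle L\psi, v\rangle \neq 0$. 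Green's formula gives $\langle L\psi, v\rangle = \int_{\del X} \psi\, \del_\nu v$, since $v|_{\del X} = 0$ and the weights pair so that no contribution arises at $r = 0$. Unique continuation (Cauchy data $v = \del_\nu v = 0$ would force $v \equiv 0$) shows $\del_\nu v \not\equiv 0$. The linear functionals $\psi \mapsto \int \psi\, \del_\nu v$ are therefore independent on $\calC^{2,\alpha}(\del X)$, so the range of \eqref{noD} meets every cokernel direction, and surjectivity follows. I expect the main obstacle to be justifying the vanishing of the pairing at the conic points and the identification of the cokernel via the $b$-Sobolev duality; the unique continuation step is standard.
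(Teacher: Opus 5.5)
Your final paragraph is essentially the paper's proof made explicit. The paper notes that surjectivity of \eqref{noD} is dual to injectivity of $L$ on the space where both Dirichlet and Neumann data vanish at $\del X$, which holds by unique continuation for the Cauchy problem; your Green's-formula pairing of boundary-supported $\psi$ against the finite-dimensional Dirichlet cokernel is that duality unpacked, and the conic points cause no trouble there because $\psi$ vanishes near them. The first three paragraphs (solving at a lower weight and subtracting asymptotic terms) play no role in the final argument and can be dropped.
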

The difference with previous results is that we are not imposing homogeneous Dirichlet conditions at $\del X$.
\begin{proof}
By  the same line of reasoning as above, \eqref{noD} is surjective if and only if its dual mapping
\[
L: r^{\mu'} \calC^{2,\alpha}_{b,C}(X; NX) \longrightarrow r^{\mu'-2} \calC^{0,\alpha}_b(X;NX)
\]
is injective. The subscript $C$ on this domain space denotes the space of H\"older functions for which {\it both} the Dirichlet
and Neumann conditions are satisfied.   However, it follows from the classical uniqueness of the Cauchy problem for
second order elliptic equations that there are no non-trivial solutions to this dual problem.
\end{proof}

Since we shall be invoking properties of various classes of degenerate pseudodifferential operators below, let us interrupt the main
flow to say a few words about the class of $b$-pseudodifferential operators, refering to \cite{Maz} for complete details.   In general, pseudodifferential
operators are a special class of integral operators $u \mapsto \int G(x,y) u(y)\, dy$ where $G$ is a distributional kernel with special 
properties. On a closed manifold, this kernel is smooth away from the diagonal and required to have a `classical' singularity (modelled
on that of the Newtonian potential, $|x-y|^{2-n}$, but possibly with different homogeneities) along the diagonal.  There is then a complete
theory about how these operators may be composed and the precise functions spaces on which they provide bounded maps. 
On a manifold-with-boundary $Y$ (or more generally, a manifold with corners), this type of description is inadequate in that it does not allow
for non-smooth behavior on the boundary faces $\del Y \times Y$ and $Y \times \del Y$. In particular, if $G$ is a parametrix for an 
elliptic conic operator, for example, then one expects some sort of exceptional singular behavior at $\del Y \times \del Y$.  The geometric
microlocal method, pioneered by Melrose, gives a way to explain these extra singularities, by requiring $G$ to not only have its classical diagonal singularity,
but to also have classical expansions at the side faces and also at a new front face obtained by a (real) blow-up of the corner. 
This is the basic ansatz for $b$-pseudodifferential operators (other degeneracy classes discussed below involve more intricate
blow-ups).  The point finally is that with this ansatz there exists a sufficiently broad class of integral kernels, i.e., pseudodifferential
operators, so as to contain good approximate inverses for conic elliptic operators.   Furthermore, the recognition of a parametrix (or
generalized inverse) as lying in this class entails knowing precise pointwise behavior of the Schwartz kernel, which may then
be parlayed into sharp mapping and regularity properties. 

We can now state the main non-linear perturbation result of \cite{CHS} and \cite{Smale93}:
\begin{theorem}\label{CHS}
Fix a non-indicial value of $\mu>1$, and let $W$ be the subspace of $\calC^{2,\alpha}(\del X)$ consisting of boundary values of solutions to $Lu = 0$
with $u \in r^{\mu} \calC^{2,\alpha}_b(X)$. Then there exists an $\eps  > 0$ sufficiently small (depending on the various geometric and analytic 
quantities above) such that if $\phi \in W$ and $\|\phi\|_{2,\alpha} < \eps$, then there exists $u\in r^\mu \calC^{2,\alpha}_b(X;NX)$ 
such that
$$
\mathcal{H}(u) = 0 \qquad \text{and} \qquad \Pi_W (u|_{r=1}) = \phi.
$$
Here $\mathcal H$ is the minimal surface operator acting on sections of the normal bundle of $X$.
\end{theorem}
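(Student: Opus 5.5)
We prove this by an implicit function theorem argument, using the splitting of $r^\mu \calC^{2,\alpha}_b(X;NX)$ from Proposition~\ref{compker}. We take the modified complement $E$, chosen so that $\Pi_W(v|_{r=1})=0$ for every $v\in E$. The ansatz is
\[
u = \calP(\phi) + v, \qquad v \in E,
\]
where $\calP$ is the Poisson operator \eqref{Poissonopb}. Then $\Pi_W(u|_{r=1}) = \Pi_W\phi + \Pi_W(v|_{r=1}) = \phi$ automatically. The boundary condition is therefore built in, and it remains to solve $\calH(\calP(\phi)+v)=0$ for $v\in E$ with $\phi\in W$ small.

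To set this up, consider the map
\[
F: W \times E \longrightarrow r^{\mu-2}\calC^{0,\alpha}_b(X;NX), \qquad F(\phi,v) = \calH(\calP(\phi)+v),
\]
which satisfies $F(0,0)=0$ because $X$ is minimal. We first check that $F$ is well defined and $\calC^1$ (indeed smooth) near $(0,0)$. The minimal surface operator has the structure $\calH(u) = Lu + Q(u)$, where near each conic point $r^2 Q(u)$ is a smooth function of $r^{-1}u$, $\nabla_b u$ and $\nabla_b^2 u$ (with $\nabla_b$ the $b$-derivatives), vanishing quadratically. Since $\mu>1$, elements $u\in r^\mu\calC^{2,\alpha}_b$ have $r^{-1}u$ and $|\nabla u|$ of size $r^{\mu-1}\to 0$. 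We would then expect an estimate
\[
\|Q(u)-Q(w)\|_{b;0,\alpha,\mu-2} \le C\big(\|u\|_{b;2,\alpha,\mu}+\|w\|_{b;2,\alpha,\mu}\big)\|u-w\|_{b;2,\alpha,\mu},
\]
and indeed even with an extra factor $r^{\mu-1}$. This follows by rescaling to the annulus $A_1$ via the dilations $D_{2^j}$, where the estimate is the standard smooth one uniformly in $j$. Away from the conic points it is the usual Hölder estimate. This step, verifying that the nonlinearity respects the $b$-weighted scale uniformly across dyadic annuli (including the correction terms coming from $X$ being only asymptotically conic), is the main technical point. It is routine but must use $\mu>1$ essentially.

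The key linear step is that $D_vF(0,0)=L|_E : E\to r^{\mu-2}\calC^{0,\alpha}_b$ is an isomorphism.
\begin{itemize}
\item Injectivity holds because $E\cap\ker L=\{0\}$. This survives the modification of $E$: the modified space is the image of the original complement under $I-\calP\circ\Pi_W\circ\mathcal R$, and this map only changes elements by elements of $\ker L$.
\item Surjectivity follows from Proposition~\ref{nonDbsurj} for non-indicial $\mu$. Given $f$, pick any $u$ with $Lu=f$ and take its $E$-component in the direct sum; this still solves $Lu=f$.
\item Boundedness of the inverse then follows from the open mapping theorem, since $E$ is closed.
\end{itemize}

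With these in hand, the implicit function theorem gives $\eps>0$ and a $\calC^1$ map $\phi\mapsto v(\phi)\in E$ on $\{\phi\in W:\|\phi\|_{2,\alpha}<\eps\}$ such that $F(\phi,v(\phi))=0$. Setting $u=\calP(\phi)+v(\phi)$ gives the required solution. Alternatively, one can run a contraction mapping for $v = -(L|_E)^{-1}Q(\calP(\phi)+v)$ on a small ball in $E$, using the quadratic estimate above.
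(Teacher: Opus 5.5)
Your proposal is correct and is essentially the paper's argument. The paper applies the implicit function theorem to $\calH$ on $(\ker L\cap r^\mu\calC^{2,\alpha}_b)\oplus E$, using surjectivity of $L$ (Proposition~\ref{nonDbsurj}) and the complement $E$ from Proposition~\ref{compker}, and then identifies the kernel with $W$; your parametrization $u=\calP(\phi)+v$, with the emended $E$ enforcing $\Pi_W(u|_{r=1})=\phi$, is the same construction made explicit.

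One small slip in your scaling remark: the scale-invariant form is that $rQ(u)$ is a smooth, quadratically vanishing function of $r^{-1}u$, $\nabla u$ and $r\nabla^2u$, not that $r^2Q(u)$ is such a function of $r^{-1}u,\nabla_b u,\nabla_b^2u$. It is the former that yields your $r^{\mu-2}\cdot r^{\mu-1}$ bound.
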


\begin{proof}
Given all of the linear tools introduced above, this amounts to observing the following. First, $\calH$ defines a smooth
map from a neighborhood of $0$ in $r^{\mu}\calC^{2,\alpha}_b$ to $r^{\mu-2}\calC^{0,\alpha}_b$.  The only point to note for this
is that in the non-linearities of $\mathcal H$, there are no quadratic expressions in second derivatives. 

The linearization of $\mathcal H$ at $u=0$ is the Jacobi operator $L$ and, by Proposition~\ref{nonDbsurj}, this linearization 
is surjective. Proposition \ref{compker} shows that  its kernel has a topological complement, hence the implicit function theorem
gives a smooth map $S$ from a neighborhood of $0$ in $\mathrm{ker} L \cap r^{\mu} \calC^{2,\alpha}_b$ to its complement $E$ such
that $\mathcal H( w + S(w)) = 0$, and all solutions near to $0$ are of this form.  We recall finally, from Proposition \ref{nullspace}, 
that elements of this nullspace are in bijective correspondence with elements of $W$.
\end{proof}

\subsection{Eigenfunctions on $S^2$ with branch points}\label{sec:harmonic} 
A minimal deformation of a multiplicity two submanifold may of course consist of two distinct minimal submanifolds, but our interest here is in constructing  
deformations which are branched. The infinitesimal version of a (suitably nice) branched minimal surface over a disk is a multi-valued harmonic function which vanishes  
to a certain order along its branching set. In this subsection, we review some facts about two-valued harmonic functions, and introduce the 
main examples used later.

In the present context there are two different ways to define two-valued harmonic functions. The first, systematically developed by Almgren \cite{Alm}, uses functions taking values 
in $\calA_2(\R^k)$, the set of sums of two Dirac masses in $\R^k$. This definition works well in low regularity situations where less structure is known 
a priori. The second, which is the one we use here as we have better a priori regularity, is to regard such functions as sections of a certain flat rank $1$ real line bundle away from a 
(codimension $2$) branching locus (cf.~\cite{Don, TW20}).

In general the problem may be phrased as follows. Given a Riemannian manifold $(M^n,g)$ and a closed subset $\Sigma$ of codimension $2$, we are 
interested in finding multi-valued harmonic functions on $M\setminus \Sigma$ which branch along $\Sigma$. For simplicity of exposition, assume 
that $\Sigma$ is either a smooth $(n-2)$-dimensional submanifold or, at worst, a smoothly stratified subset of $M$ with the principle stratum being of 
dimension $n-2$. (In applications where these multi-valued functions arise, see e.g.\ \cite{Tau, Zha}  for example, it is only known that $\Sigma$ is 
countably $(n-2)$-rectifiable. However, we are not concerned here with this most general setting.)

Fix a flat real metric line bundle $\calI$ over 
$M\setminus \Si$ with monodromy $-1$ on small loops around $\Sigma$. In other words, a section of $\calI$ is locally represented by a real-valued 
function, but parallel transport of this function as a section of $\calI$ on a loop encircling $\Sigma$ once acts by multiplicity by $-1$. We can 
thus regard any section of $\calI$ as a two-valued scalar function where the value of $u$ at any point $p\not\in\Sigma$ is indistinguishable 
from $-u(p)$. The prototypical example is the function $u(z) = \mathrm{Re}\,\sqrt{z}$ on $\CC\cong\R^2$. We call $\Sigma$ the \emph{branching set} of $u$.  
For readers less familiar with this bundle formulation, a simple example to gain intuition is provided by the non-orientable $\RR$-bundle over $S^1$ 
(i.e., the M\"obius strip).  A section of this corresponds to a function $u$ on $\RR$ which transforms by $u(x + 2\pi) = -u(x)$. Thus the value 
$u(x)$ at a point $x$ is indistinguishable from its value $-u(x)$ at the equivalent point $x+2\pi$ in $\RR/2\pi \ZZ$.  
This is equivalent to Almgren's formulation where $u$ is a map $M\to \calA_2^0(\R)$ into the set of sums of two Dirac masses 
in $\R$ with average zero.  Donaldson \cite{Don} gives a slightly more general definition  where $\calI$ is a flat affine bundle; in the present 
setting, where the only monodromy comes from the $-1$ action around $\Sigma$, this is equivalent to Almgren's definition.

Since $\calI$ is flat, the notion of a harmonic section is well-defined (and independent of the choice of $\calI$), and we call such an object a
\emph{$\ZZ_2$-harmonic function} or \emph{two-valued harmonic function}. We will impose certain decay restrictions on $u$ at $\Sigma$ below. 

There are natural orders of blow up or decay at $\Sigma$ for solutions of this problem. Indeed, when $\Sigma$ is a \emph{smooth} codimension $2$ submanifold, 
the frequency of a $H^1_{\text{loc}}$ $\Z_2$-harmonic function $u$ at points in $\Sigma$ takes values in $\{\tfrac{1}{2},\tfrac{3}{2},\tfrac{5}{2},
\dotsc\}$. This follows from a stronger and more general statement, namely in cylindrical Fermi coordinates $(r,\theta,y)$ around $\Sigma$ 
(here, $y\in \Sigma$), any $L^2_{\text{loc}}$ $\Z_2$-harmonic function $u$ has an asymptotic expansion of the form 
\begin{equation}
\begin{split}
 & u(r,\theta, y) \sim \,  r^{-1/2}( a_0(y) \cos(\theta/2) + b_0(y) \sin(\theta/2))\\
 & \hspace{10em} + r^{1/2} (a_1(y) \cos(\theta/2) + b_1(y) \sin(\theta/2))\\ 
 & \hspace{15em}  + r^{3/2} (a_2(y) \cos (3\theta/2) + b_2(y) \sin( 3\theta/2)) + \cdots
\end{split}
\label{asymp}
\end{equation}
This is explained more carefully in \cite{HMT23}, but see \cite[Section 7]{Maz} for a proof. The individual terms correspond to, of course, the functions
$(\sqrt{z})^{2k+1}$ and $(\sqrt{\bar{z}})^{2k+1}$, $k\in \{-1,0,1,\dotsc\}$, where $z$ is a complex coordinate in the normal variables in
a Fermi coordinate system around $\Sigma$. 

As a matter of general background, we recall from \cite{HMT23, Maz} that there is an infinite dimensional space of $\ZZ_2$-harmonic functions 
which blow up like $r^{-1/2}$ at $\Sigma$. When $M$ is a closed manifold, these are in bijective correspondence with pairs of leading coefficients $(a_0,b_0) 
\in H^{-1/2}(\Sigma) \oplus H^{-1/2}(\Sigma)$. Finding a two-valued harmonic function with specified leading coefficients $(a_0,b_0)$ is tantamount 
to solving an inhomogeneous Dirichlet problem in this setting, with $(a_0, b_0)$ serving as the Dirichlet data. By the maximum principle, there are no (non-zero)
two-valued solutions of $\Delta u = 0$ which have both $a_0 = b_0 = 0$.  We also remark that this expansion only holds in
a weak sense if $a_0, b_0 \not\in \calC^\infty$, but we refer to \cite{Maz} for a careful description of that phenomenon since it does not 
arise in our particular situation.  In the situations considered here, $a_0, b_0 \in \calC^\infty$ and the expansions are polyhomogeneous 
as in Definition~\ref{polyhom}.

If $M$ is a manifold-with-boundary, e.g., $M = B^3$ (as in our setting), and $\Sigma$ is transverse to the boundary, then there are infinitely many 
$\ZZ_2$-harmonic functions which vanish like $r^{1/2}$ on $\Sigma$. These are determined by specifying  boundary values at $\del M$. Note
that these boundary values are themselves $\ZZ_2$-valued, and branch on $\Sigma \cap \del M$. 

The $\ZZ_2$-harmonic functions which are important in geometric applications, including the one here, are not only bounded, i.e.,
omit the $r^{-1/2}$ term in their expansion, but vanish to order $3/2$ or faster. In other words, they also omit the $r^{1/2}$ terms,
and hence the coefficients $a_0, b_0, a_1, b_1$ all vanish.   
This extra requirement is tantamount to imposing that the \emph{entire} Cauchy data of $u$ on $\Sigma$ vanishes, which then
makes this a sort of free-boundary problem.  As such, it is highly ill-posed, and one should only expect solutions to exist in exceptional 
circumstances. In particular, the set of pairs $(M,\Sigma)$ (and underlying metric $g$ on $M$) for which such a non-trivial solution is supported
should be highly constrained. This instability is prominent in Donaldson's investigation \cite{Don} of the deformation theory of these 
special $\Z_2$-harmonic functions.

We henceforth restrict attention to a particular subclass of functions on the ball obtained by imposing a discrete symmetry; this turns
out to create a significant simplification.  The relevance of this restriction is a central feature in the paper of Taubes--Wu \cite{TW20}. 
Their interest in that paper is in constructing $\ZZ_2$-harmonic functions decaying at order $3/2$ or greater along branching sets which
are more complicated than simple curves. They observe that for a small list of suitable group actions, the $\ZZ_2$-harmonic functions which are 
in $L^2$ and equivariant with respect to one of these actions necessarily satisfy this faster decay at the branching set, which
is equal to the set of points where the isotropy subgroup is non-trivial.

The simplest example of their construction is described as follows. Let $\Gamma$ be the group of all (not necessarily orientation-preserving) 
symmetries of the regular tetrahedron in $\R^3$ (this can be identified with $S_4$, the group of symmetries on four letters by permuting 
the vertices $p_1,\dotsc,p_4$).  `Inflating' the regular tetrahedron into a spherical tesselation by equilateral triangles $\{T_\ell\}_{\ell=1}^4$,
then $\Gamma$ is generated by reflections across the edges of these triangles. (One can thereby alternately think of $\Gamma$ as acting by rigid motions
on the sphere or on $\RR^3$.) The isotropy group of this action on $\R^3$ is non-trivial on each of the four rays $R_j = \{rp_j:r\geq 0\}$, 
and the $\ZZ_2$-harmonic functions produced by Taubes--Wu have decay rate $3/2$ on the branching set $\mathcal{R} := R_1\cup\cdots\cup R_4$. 

Roughly speaking, their method is to first find $\Z_2$-harmonic functions which branch along $\mathcal{R}$, but only decay to order $1/2$ there, and 
then use the group symmetry to improve the decay rate to order $3/2$ or greater. 

In detail, they seek harmonic functions which are homogeneous on $\R^3$, i.e., have the form $u(r,y) = r^\gamma\psi(y)$, where $r\geq 0$ and 
$y\in S^2$ are spherical coordinates. Here $\psi$ is itself a $\Z_2$-valued function on $S^2$ which branches at the points $p_j$. As in the 
single-valued case, an easy calculation shows that $\Delta u = 0$ is equivalent to the eigenfunction equation
\begin{equation}\label{quad}
\Delta_{S^2} \psi  + \gamma(\gamma-1) \psi = 0.
\end{equation}
This reduces the problem to seeking $\Z_2$-eigenfunctions of $\Delta_{S^2}$ which branch at the $p_j$; a subsequent argument then shows that these 
in fact vanish at a faster rate at each such point. 

We briefly pause to note that there is a slightly more general formulation where the first part of this may be carried out. Namely, fix an even number 
of points $p_1,\dotsc,p_{2k}$ in $S^2$ (there must be an even number to support a line bundle with the correct holonomy) and search for $\Z_2$-eigenfunctions 
of $\Delta_{S^2}$ which decay to order $1/2$ and branch at exactly these points. 
This can be done abstractly by considering the Friedrichs extension of the Laplacian (acting on sections of $\calI$), which is obtained from the clossure
of $\calC^\infty_0( S^2 \setminus \{p_1, \ldots, p_{2k}\})$ using the quadratic form $Q(u) = \int |\nabla u|^2$. This general process produces a
self-adjoint operator with discrete spectrum, and each of its eigenfunctions branches and decays to order $1/2$ at every one of the $p_j$ (this is 
built into the Friedrichs construction).  However, it is still quite rare to find configurations of points where at least some of these eigenfunctions
vanish to order $3/2$ at least at all of the $p_j$. We refer to \cite{He,TW24,FS25} for more on this.

There are two equivalent ways to think about the $\Gamma$-equivariance: 
\begin{enumerate}
\item [(1)] In the first, again let $\mathcal{T} = \{T_\ell\}_{\ell=1}^4$ be the four spherical triangles (each of which is a fundamental domain for the action 
of $\Gamma$ on $S^2$).  Let $\psi_1$ be the ground state eigenfunction of the restriction of $\Delta_{S^2}$ to $T_1$ with Dirichlet conditions 
at $\del T_1$. Now extend $\psi_1$ to the rest of $S^2$ by successive reflection across the sides of $T_1$, each time defining the extension by 
odd reflection. The Schwarz reflection principle ensures that this extended function is smooth away from the $p_j$. Since each $p_j$ is a vertex of 
exactly three triangles, this continued reflection produces a function which only ``closes up'' after two rotations around each $p_j$. This extended
eigenfunction is a section of $\calI$, and since it is bounded, it must lie in the Friedrichs domain.
\item [(2)] The second is to consider the lift of the action of $\Delta_{S^2}$ to the branched double-cover $\widetilde{M} \to S^2$, where the
branching set consists of the four points $p_j$. This cover is a torus with four conic points, each with cone angle $4\pi$. We may consider the
Friedrichs domain of the Laplacian on this conic surface, and then restrict to the subspace of functions which transform by $-1$ when reflected 
across the edges of each of the lifts of the triangles $T_\ell$. This is the same as restricting to the subspace of functions which transform 
according to the representation $\Gamma\to \Z_2$ which sends $\gamma\mapsto \text{sign}(\gamma)$ to the sign of the permutation in 
$S_4$ corresponding to $\Gamma$.
\end{enumerate}
In either formulation, one obtains an infinite collection of $\Z_2$-valued eigenfunctions $\{\phi_\ell\}_{\ell=1}^\infty$ on $S^2$ which branch at the $p_j$ and are bounded.
Any such $\phi_\ell$ has an asymptotic expansion near each $p_j$ of the same form as in \eqref{asymp},  but where the distance function $r$ to $\Sigma$
is now replaced by the spherical distance $\rho$ to $p_j$ (this is defined and smooth in some small disk around each $p_j$).  Since $\phi_\ell$ is bounded,
its expansion can contain only terms of the form $\rho^{ k+\frac12}$ with $k \geq 0$.  However, the coefficients $e^{\pm i \theta/2}$ of $\rho^{1/2}$
do not transform correctly with respect to the action of the subgroup of $\Gamma$ which fixes $p_j$. Namely, since $u$ must transform by $-1$ 
under $\theta \mapsto \theta + 2\pi/3$, the same must be true for the coefficient of each $\rho^{k+1/2}$. Hence the coefficients $a_1, b_1$ vanish.  
Notice that the coefficients $e^{\pm 3i\theta/2}$ of $\rho^{3/2}$ do transform correctly under this action, hence are not excluded.  This proves the following:
\begin{lemma}
Denote by $L^2_\Gamma(S^2, \calI)$ the subspace of $L^2$ sections of $\calI$ which satisfy the above $\Gamma$-equivariance property. Then each
eigenfunction $\phi_\ell\in L^2_\Gamma(S^2;\calI)$ of $\Delta_{S^2}$ has a polyhomogeneous expansion at each $p_j$ with leading term 
$\rho^{3/2}(a_2\cos(3\theta/2) + b_2\sin(3\theta/2))$.  In fact, choosing the angular coordinate so that one edge of the triangle $T_1$ 
corresponds to $\theta = 0$, the coefficient of $\rho^{3/2}$ is simply $b_2 \, \rho^{3/2} \sin (3\theta/2)$. 
\end{lemma}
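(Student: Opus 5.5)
Near a fixed vertex $p_j$, I will start from the local expansion of a bounded section of $\calI$, use the isotropy at $p_j$ to remove the $\rho^{1/2}$ modes, and then use the reflection symmetry to fix the phase of the $\rho^{3/2}$ mode.

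\emph{Step 1: polyhomogeneous expansion.} Work in geodesic polar coordinates $(\rho,\theta)$ centred at $p_j$. The eigenfunction equation $\Delta_{S^2}\phi_\ell+\lambda\phi_\ell=0$ is then a conic elliptic equation, with
\[
\rho^2(\Delta_{S^2}+\lambda)=(\rho\del_\rho)^2+\del_\theta^2+O(\rho)\text{-terms}.
\]
It acts on functions antiperiodic in $\theta$ with period $2\pi$. The indicial roots are therefore $\pm(k+\tfrac12)$, with angular modes $e^{\pm i(k+1/2)\theta}$.

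By Proposition~\ref{prop:reg}, applied in this two-dimensional conic setting (or by the expansion \eqref{asymp} with $\Sigma=\{p_j\}$), $\phi_\ell$ is polyhomogeneous at $p_j$. Its expansion has terms $\rho^{k+1/2+m}$ times trigonometric polynomials in $\theta$. The $\rho^{-1/2}$ term is absent because $\phi_\ell$ is bounded, being in the Friedrichs domain. I expect log terms cannot occur: the only possible coincidences are between $k+\tfrac12+m$ and another indicial root. I will check the first few terms by hand. The correction terms $\rho^{1/2+m}$ are generated only by the $\rho^{1/2}$ coefficient, so they vanish once $a_1=b_1=0$.

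\emph{Step 2: kill the $\rho^{1/2}$ term.} The isotropy subgroup of $p_j$ contains the rotation $R$ by $2\pi/3$ about $p_j$, which is an even permutation (a 3-cycle). Following the equivariance described above, $\phi_\ell\circ R$ equals $\phi_\ell$ after parallel transport through angle $2\pi/3$, up to the sign convention of formulation (2). Either way the induced action on the antiperiodic mode $e^{\pm i\theta/2}$ is multiplication by $e^{\pm i\pi/3}$, which is neither $+1$ nor $-1$. Since the expansion is unique, term by term, the $\rho^{1/2}$ coefficient must be invariant under this action up to the overall sign. This forces $a_1=b_1=0$.

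For the $\rho^{3/2}$ mode the action is multiplication by $e^{\pm i\pi}=-1$, so this mode survives. The same argument applies to every subsequent term, so the leading term is $\rho^{3/2}(a_2\cos(3\theta/2)+b_2\sin(3\theta/2))$.

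\emph{Step 3: fix the phase.} Choose $\theta=0$ along an edge of $T_1$. Odd reflection across that edge, $\theta\mapsto-\theta$, gives $\phi_\ell(\rho,-\theta)=-\phi_\ell(\rho,\theta)$ for the appropriate branch. Alternatively, the Dirichlet condition on the edge $\theta=0$ holds on $T_1$ directly. Either way the cosine coefficient vanishes, so $a_2=0$.

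\emph{Main obstacle.} The delicate step is the bookkeeping of how $\Gamma$ acts on sections of $\calI$ rather than on functions: the sign representation versus the monodromy sign under rotation about $p_j$. I would pin this down by tracking the reflection construction explicitly. Three successive reflections around $p_j$ through the three triangles compose to the rotation, and six reflections return the function with a sign $-1$ (the monodromy). This makes the phase $e^{\pm i\pi/3}$ concrete, and shows it cannot be compatible with any consistent sign.
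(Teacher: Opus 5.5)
Your proposal is correct and follows the paper's argument: the expansion \eqref{asymp} at $p_j$, boundedness to remove $\rho^{-1/2}$, the action of the order-three rotation about $p_j$ by a sign to remove the $\rho^{1/2}$ modes, and the odd/Dirichlet condition on the edge $\theta=0$ (left implicit in the paper) to remove $\cos(3\theta/2)$. One correction to your closing bookkeeping remark, which Step 2 does not actually rely on: three reflections compose to a reflection, not to a rotation (the rotation by $2\pi/3$ is a product of two reflections), and the monodromy $-1$ is acquired after three edge crossings, i.e.\ one full turn around $p_j$, whereas six crossings close up with sign $+1$.
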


\begin{defn} To simplify notation below, for any of the function spaces considered here consisting of sections of the line bundle $\calI$ with 
various regularities, we add a subscript $\Gamma$ to indicate the subspace consisting of elements which transform by $-1$ with respect
to the reflections in $\Gamma$.  Thus we shall work entirely within the subclass of $\Gamma$-equivariant sections. Notice that any one
of these sections necessarily vanishes on the boundaries of each of the spherical triangles $T_j$ (or $C(T_j)$ in $B^3$). 
\end{defn}

We next examine the behaviour at $0$ of any $\Z_2$-harmonic function obtained this way. Recall the initial ansatz that $u = r^\gamma \psi(y)$ where $\psi$ is 
a $\ZZ_2$-eigenfunction of $\Delta_{S^2}$.  Following \eqref{quad}, the exponents are $\gamma_\ell^\pm = -\frac{1}{2}\pm \sqrt{\frac{1}{4}+\lambda_\ell}$.  
Our solutions must be bounded, so the roots $\gamma_\ell^-$ are discarded. In fact, we are only interested in solutions which vanish to order greater than $1$ 
at $0$ since in the corresponding minimal surface problem, this corresponds to solutions where the tangent cone of the graph is the original cone. To that end, 
observe that
$$
\gamma_\ell^+ > 1 \quad \Longleftrightarrow \quad \lambda_\ell>2.
$$
The beautiful fact is that this is no restriction at all! Each spherical triangles $\mathcal{T}_i$ lies in a hemisphere and each eigenfunction $\phi_\ell$ 
vanishes on $\del T_j$. The lowest Dirichlet eigenvalue of this hemisphere is $2$, hence by domain monotonicity of Dirichlet eigenvalues, the condition 
$\lambda_\ell>2$ holds for every $\ell$. This is the key to our main non-linear perturbation result in Section \ref{sec:non-linear-perturbation}.

We note for later purposes that while the lowest eigenvalue of this equilateral spherical triangle is not known explicitly, numerical computations indicate that 
it is approximately equal to $5.159$, cf.\ \cite{RatTre}.  From this it follows that 
$$
\gamma_\ell^+>3/2 \qquad \text{for all }\ell.
$$
One consequence is that in these examples the frequency function is discontinuous at $0$.

Using all of this, we can now define the Poisson map action on sections of $\calC^{2,\alpha}_{b,\Gamma}(S^2;\calI)$ by
$$
\mathcal{P}(\phi) := \sum^\infty_{\ell=1}a_\ell r^{\gamma_\ell^+}\phi_\ell
$$
where $\phi = \sum_\ell a_\ell\phi_\ell$. It is straightforward to check using a barrier argument that for $\mu\in (1,\gamma_1^+)$ we have
$$
\mathcal{P}:\calC^{2,\alpha}_{b,\Gamma} (S^2;\calI)\longrightarrow r^\mu \calC^0_\Gamma(B^3).
$$
There is a much sharper regularity statement for solutions which we defer to Proposition \ref{poissonedge} and Proposition \ref{polyprop} below. A 
weaker statement following from the results above is the following:
\begin{prop}
Each $\Gamma$-equivariant $\Z_2$-harmonic function $u$ obtained as above satisfies
\begin{enumerate}
\item [(i)] $|u(r,y)|\leq C\dist(ry,\mathcal{R})^{3/2}$ near the (branching) rays $R_j$;
\item [(ii)] $|u(r,y)|\leq Cr^{3/2+\eps}$ as $r\to 0$, for some $\eps>0$.
\end{enumerate}
\end{prop}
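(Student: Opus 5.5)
We take $u = \mathcal{P}(\phi) = \sum_\ell a_\ell r^{\gamma_\ell^+}\phi_\ell$ with $\phi\in\calC^{2,\alpha}_{b,\Gamma}(S^2;\calI)$. Both estimates reduce to uniform control of the eigenfunction series, using the preceding Lemma for the local structure at the $p_j$ and the spectral facts $\lambda_\ell \ge \lambda_1 \approx 5.159$, which give $\gamma_\ell^+\ge\gamma_1^+>3/2$.

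\emph{Step 1: the estimate (ii) at the origin.} Since $u$ is harmonic in $B^3\setminus\mathcal{R}$, lies in $L^2_\Gamma$, and all exponents satisfy $\gamma_\ell^+\ge \gamma_1^+$, we split off the first mode. For $r\le 1/2$,
\[
|u(r,y)| \le r^{\gamma_1^+}\sum_\ell |a_\ell|\, (r/1)^{\gamma_\ell^+-\gamma_1^+}\|\phi_\ell\|_{L^\infty}.
\]
Weyl's law gives $\lambda_\ell\sim c\ell$, so $\gamma_\ell^+\sim\sqrt{c\ell}$. Standard elliptic estimates on the double cover give $\|\phi_\ell\|_{L^\infty}\le C\lambda_\ell^{N}$, and $|a_\ell|\le\|\phi\|_{L^2}$. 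The factor $2^{-(\gamma_\ell^+-\gamma_1^+)}$ decays like $e^{-c'\sqrt\ell}$, so the series converges. This yields $|u|\le C r^{\gamma_1^+}$ with $\gamma_1^+ = 3/2+\eps$.

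Alternatively, (ii) follows from the barrier argument already mentioned: the comparison function $C r^{\mu}\psi$, with $\psi$ a positive supersolution on $T_1$ (the first Dirichlet eigenfunction of a slightly larger triangle), dominates $|u|$ on each cone $C(T_j)$ by the maximum principle. This works because $u$ vanishes on $\partial C(T_j)$ by $\Gamma$-equivariance.

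\emph{Step 2: the estimate (i) near the rays.} The key observation is that by equivariance, $u$ restricted to $C(T_1)$ is an ordinary single-valued harmonic function vanishing on the three planar faces of the cone. Near an interior point of $R_j$ (with $r\in[\delta,1]$), the cone $C(T_1)$ is locally a wedge of opening angle $2\pi/3$ times an interval. A harmonic function vanishing on both faces of such a wedge is bounded by $C\rho^{\pi/(2\pi/3)} = C\rho^{3/2}$, where $\rho=\dist(\cdot,R_j)$. This follows by comparison with the barrier $\rho^{3/2}\sin(3\theta/2)$, suitably corrected, or directly from the expansion \eqref{asymp} combined with the Lemma. The constant depends on $\sup|u|$ on a slightly larger region, which is finite.

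To make the constant uniform as $r\to0$, we use scaling. Apply the wedge estimate to the rescaling $u_\lambda(x)=u(\lambda x)/\lambda^{\gamma_1^+}$ on the annulus $\{1/2\le|x|\le2\}$; by Step 1 these are uniformly bounded. This gives
\[
|u|\le C r^{\gamma_1^+-3/2}\dist(\cdot,\mathcal{R})^{3/2}\le C\dist(\cdot,\mathcal{R})^{3/2}.
\]
Near $r=1$, boundedness of $u$ up to the boundary, using the $\calC^{2,\alpha}$ regularity of $\phi$, handles the remaining region.

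\emph{Main obstacle.} The delicate point is uniformity: obtaining the $3/2$ decay along the rays with constants independent of $r$, and justifying convergence of the series up to $r=1$. I would handle both by the dilation-invariant ($b$-type) scaling argument above, together with the maximum principle on the single fundamental cone $C(T_1)$. This avoids delicate pointwise bounds on the eigenfunctions $\phi_\ell$.
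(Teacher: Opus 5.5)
Your argument is essentially correct. For (ii) it matches the paper, but for (i) it takes a different route.

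The paper gives no separate proof. It presents the proposition as a consequence of three preceding facts:
\begin{itemize}
\item the Lemma that each $\Gamma$-equivariant eigenfunction $\phi_\ell$ has leading term $b_2\rho^{3/2}\sin(3\theta/2)$ at every $p_j$, so each term of $u=\sum_\ell a_\ell r^{\gamma_\ell^+}\phi_\ell$ is $O(\dist^{3/2})$;
\item the barrier bound $\mathcal{P}:\calC^{2,\alpha}_{b,\Gamma}(S^2;\calI)\to r^\mu\calC^0_\Gamma(B^3)$ for $\mu\in(1,\gamma_1^+)$;
\item $\gamma_1^+>3/2$, from $\lambda_1\approx 5.159$.
\end{itemize}
Sharper statements are deferred to the edge and iterated-edge theory.

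Your barrier alternative for (ii) is essentially the paper's argument. Your series argument (Weyl's law, polynomial $L^\infty$ bounds, decay of $2^{-(\gamma_\ell^+-\gamma_1^+)}$) is a valid second route, and it gives the sharp rate $r^{\gamma_1^+}$.

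For (i), the paper implicitly sums eigenfunction expansions termwise. Making that rigorous needs uniform-in-$\ell$ control of the $\rho^{3/2}$ coefficients of the $\phi_\ell$, which the paper leaves implicit. You instead observe that $C(T_1)$ is, near $R_j$, a dihedral wedge of opening $2\pi/3$, which gives the exponent $\pi/(2\pi/3)=3/2$ locally. You then get uniformity as $r\to0$ from the dilation invariance of the wedge together with Step 1. This is more elementary and makes the scale invariance explicit. It even gives the stronger bound $|u|\le Cr^{\gamma_1^+-3/2}\dist(\cdot,\mathcal{R})^{3/2}$.

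Two points need tightening.

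First, you need the local wedge estimate with the \emph{exact} exponent $3/2$ and a constant depending only on $\sup|u|$. This does not follow from $\rho^{3/2}\sin(3\theta/2)$ ``suitably corrected'' in any obvious way. That function vanishes on the faces, so it cannot dominate $u$ on the caps transverse to the edge. Positive barriers built on a slightly larger wedge, as in the proof of Theorem~\ref{thm:existence}, only give $\rho^\gamma$ with $\gamma<3/2$. Instead, rely on the expansion \eqref{asymp} combined with the equivariance argument of the Lemma. Alternatively, bootstrap from the $\rho^\gamma$ bound:
\begin{itemize}
\item tangential derivatives along $R_j$ are harmonic, vanish on the faces, and are bounded, since $|\nabla u|\le C\rho^{\gamma-1}$ by rescaling;
\item so they obey the same $\rho^\gamma$ bound;
\item solving the two-dimensional sector problem slice by slice then gives $\rho^{3/2}$.
\end{itemize}

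Second, your final sentence does not work. Near the point where $R_j$ meets $S^2$, boundedness of $u$ gives nothing. For general $\phi\in\calC^{2,\alpha}_{b,\Gamma}$ the bound (i) is actually false at $r=1$. An example is $\phi=\chi(\rho)\sin(3\theta/2)$ near $p_j$, which is $\Gamma$-equivariant and in the $b$-space but does not decay at $p_j$. So (i) should be understood on $B_{1-\delta}$, where your wedge estimate plus boundedness of $u$ suffices. Otherwise you must assume $|\phi|\le C\rho^{3/2}$ near the $p_j$ and add a corner barrier at $R_j\cap S^2$.
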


To conclude, although we have restricted to the particular case of the tetrahedral tiling of $S^2$. Taubes--Wu discuss a handful of other examples, 
where the discrete group $\Gamma$ is replaced by a few other possible examples. For each of these the overall picture remains the same. 
This entire discussion can also be generalized to certain regular tilings of $S^{n-1}$. To keep the focus of this section simpler, we have restricted 
to these examples on $S^2$ and $B^3$, and defer discussion of the higher dimensional cases to Section \ref{sec:hypersurface}.

\subsection{An extension to cylinders}

There is one fairly easy extension of the results in Section \ref{sec:harmonic} to higher dimension, which we detail here. The discussion in 
Section \ref{sec:harmonic} gives two-valued harmonic functions on $B^3$ satisfying the extra vanishing condition at the branch set. In fact, this 
immediately generalizes to define two-valued harmonic functions on products of the form: $B^3\times Y$, where $Y$ is any compact manifold; 
or $B^3\times\R^N$ for some $N\geq 1$. In these cases, the group acts just on the first factor. We take an intermediate route here and consider 
the problem on $B^3\times\bbT^N$, where $\bbT^N$ is the standard (flat) $N$-torus.  This immediately yields results about two-valued harmonic 
functions on $B^3\times\R^N$ which are periodic in the $\RR^N$ factor.  It seems quite likely that one can take limits of sequences of such 
solutions, allowing the generators of the lattice defining the torus to tend to infinity, to obtain non-periodic solutions on $B^3 \times \RR^N$
which have some specified decay rate at infinity. However, we leave any such potential generalization to the reader. 

Recall that $p_1,\dotsc,p_4\in S^2$ are the vertices of the regular tetrahedron with corresponding rays $R_1,\dotsc,R_4$ and spherical triangles 
$T_1,\dotsc,T_4\subset S^2$. We write $(r,y)$ for the standard polar coordinates on $B^3$, with $y\in S^2$, and $z$ for the coordinates on 
$\bbT^N$. The extension of the results in Section \ref{sec:harmonic} to this setting can be stated as follows.

\begin{prop}\label{poissonedge}
Fix $\mu \in (1, \gamma_0^+)$. Let $\phi\in \calC^{2,\alpha}_{e,\Gamma}(S^2\times\bbT^N;\calI)$ be any two-valued function which branches along 
$\cup_{j}(p_j\times\bbT^N)$ and is odd with respect to reflections across any face $(\del T_j)\times\bbT^N$ in $S^2\times \bbT^N$.  Then there exists 
a harmonic function $u\in r^\mu \calC^{2,\alpha}_{\ie, \Gamma}(B^3\times\bbT^N;\calI)$ which satisfies $u|_{S^2\times \bbT^N} = \phi$ and branches 
along $\cup_{j}(R_j\times \bbT^N)$.
\end{prop}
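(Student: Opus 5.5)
The plan is to separate variables in the $\bbT^N$ factor and reduce to a family of problems on $B^3$, each handled by the $\Gamma$-equivariant spectral theory of Section~\ref{sec:harmonic}.

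Expand $\phi$ in Fourier modes along the torus and in the $\Gamma$-equivariant eigenfunctions $\phi_\ell$ on $S^2$:
\[
\phi(y,z)=\sum_{\ell,\kappa} a_{\ell\kappa}\,\phi_\ell(y)e^{i\kappa\cdot z}, \qquad \kappa\in\Z^N .
\]
For each mode we solve the radial ODE
\[
f''+\tfrac{2}{r}f'-\tfrac{\lambda_\ell}{r^2}f-|\kappa|^2f=0 .
\]
Its regular solution is the modified spherical Bessel function $f_{\ell\kappa}(r)=i_{\gamma_\ell^+}(|\kappa|r)/i_{\gamma_\ell^+}(|\kappa|)$, which behaves like $r^{\gamma_\ell^+}$ at $0$ and equals $1$ at $r=1$. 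We then set
\[
u=\sum a_{\ell\kappa}f_{\ell\kappa}(r)\phi_\ell(y)e^{i\kappa\cdot z}.
\]
Equivariance and the branching along $R_j\times\bbT^N$ are automatic, since each term is a $\Gamma$-odd section of $\calI$. Because every $\lambda_\ell>2$, we get $\gamma_\ell^+\ge\gamma_1^+>\mu>1$. This gives the formal solution and shows it is harmonic with the correct boundary values, at least in $L^2$.

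An equivalent and more robust route, which I would actually use for the estimates, is to solve a Dirichlet problem for the Friedrichs Laplacian on $\Gamma$-equivariant sections. First extend $\phi$ to some $\tilde\phi$ supported near $r=1$, and set $f=\Delta\tilde\phi$, which vanishes near $r=0$. Then solve $\Delta w=f$ with $w=0$ on $S^2\times\bbT^N$, using the quadratic form on the fundamental domain $C(T_1)\times\bbT^N$ with Dirichlet conditions on the walls. Finally put $u=\tilde\phi-w$. Existence in $H^1$ follows from Lax--Milgram, since the Dirichlet form is coercive on this domain.

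The real content is the regularity statement $u\in r^\mu\calC^{2,\alpha}_{\ie,\Gamma}$, and I expect this to be the main obstacle. This is an iterated-edge space. Near the edge $R_j\times\bbT^N$ away from $r=0$, the operator is an edge operator with edge $\R_+\times\bbT^N$. Its indicial roots for $\Gamma$-odd sections are $3/2,\,9/2,\dots$ (together with negative roots), because the $r^{1/2}$ terms are killed by the equivariance exactly as in the Lemma above. Edge elliptic theory, as in \cite{Maz}, then gives edge H\"older regularity and the vanishing $\dist^{3/2}$.

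Near $r=0$, the leading model is the conic operator $\partial_r^2+\tfrac{2}{r}\partial_r+r^{-2}\Delta_{S^2}$ acting on the link, with the $\bbT^N$ directions contributing only a lower-order term $r^2\Delta_z$ after multiplying by $r^2$. So the indicial roots at the cone tip are $\gamma_\ell^\pm$. A weighted estimate on dyadic annuli, using the rescaling of Definition~\ref{bHolder} combined with the interior edge estimates, then gives $u\in r^\mu\calC^{2,\alpha}$ whenever $\mu<\gamma_1^+$. The decay at the tip can be confirmed either from the Bessel expansion, where each mode is $O(r^{\gamma_\ell^+})$ uniformly, or by a barrier of the form $Cr^{\mu}\psi_1(y)$ with $\psi_1$ the ground state on $T_1$. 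This barrier is a supersolution up to the $z$-Laplacian, which is harmless because the barrier is $z$-independent.

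The delicate point is to patch the edge estimates, which are uniform under dilation thanks to the conic scaling of the edges, with the conic weight at $0$. I would first try doing this through the dyadic decomposition, applying the standard edge Schauder estimate on each rescaled annulus $A_j\times\bbT^N$. The torus directions shrink relative to the annulus under rescaling, which only improves the constants.
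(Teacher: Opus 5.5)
Your architecture is the paper's: expand in the $\Gamma$-equivariant eigenfunctions $\phi_\ell$, solve the radial problems, get the $r^\mu$ decay at $\{0\}\times\bbT^N$ by a barrier, and get regularity from local theory. Your modified spherical Bessel functions are just the explicit form of the paper's ``integral formula'' after a further Fourier decomposition in $z$. But the step that carries the weight, the decay estimate, does not work as you set it up.

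The barrier $Cr^\mu\psi_1(y)$ has to dominate $u$ on the cap $r=1$, so you need $|\phi(y,z)|\le C\psi_1(y)$ on $T_1$. Since $T_1$ has angle $2\pi/3$ at its vertices, $\psi_1\sim\rho^{3/2}\sin(3\theta/2)$ there, and nothing forces $\phi$ to vanish at that rate. An element of $\calC^{2,\alpha}_{e,\Gamma}$ may look like $\sin(3\theta/2)$ near $p_j\times\bbT^N$. Even the weighted data $\rho^\nu\calC^{2,\alpha}_{e,\Gamma}$, $\nu\in(1,3/2)$, used later in the paper are not $O(\rho^{3/2})$.

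The same example defeats your Lax--Milgram route. There $|\nabla\tilde\phi|\sim\rho^{-1}$, so the Dirichlet energy diverges logarithmically along $R_j\times\bbT^N$ near $r=1$; use Perron's method on $C(T_1)\times\bbT^N$ for existence instead.

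Your other option, summing the $(\ell,\kappa)$ series, needs absolute summability of the $z$-Fourier coefficients, which edge regularity of $\phi$ does not provide. The argument that works, and is what the paper intends, keeps $z$ and compares mode by mode. Since $r^{\gamma_\ell^+}$ is an exact $z$-independent solution of $L_\ell$, the maximum principle (using $\epsilon r^{\gamma_\ell^-}$ with $\gamma_\ell^-<0$ to dispose of $r=0$) gives $|u_\ell(r,z)|\le \sup_z|\psi_\ell|\,r^{\gamma_\ell^+}\le C\|\phi\|_{L^\infty}r^{\gamma_\ell^+}$. Because $\|\phi_\ell\|_{L^\infty}$ grows only polynomially in $\lambda_\ell$ while $\gamma_\ell^+\to\infty$, summing yields $|u|\le Cr^{\gamma_1^+}$ for $r\le 1/2$, and $|u|\le\|\phi\|_{L^\infty}$ elsewhere. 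Alternatively, replace $\psi_1$ by a positive supersolution of $\Delta_{S^2}+\mu(\mu+1)$ on $T_1$ that behaves like $\sin(3\theta/2)$ at the vertices. Such a function can be built because $\rho^\beta\sin(3\theta/2)$ is superharmonic in the $2\pi/3$-sector for $|\beta|<3/2$.

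Your rescaling remark is also backwards. Dilating $A_j$ to unit size turns $\bbT^N$ into $2^j\bbT^N$. If instead you rescale $r$ alone, you get $\partial_R^2+\tfrac2R\partial_R+R^{-2}\Delta_{S^2}+2^{-2j}\Delta_z$, which is not uniformly elliptic, so the constants get worse. You must localize in $z$ at scale $2^{-j}$, as the cubes $\widetilde Q_{i,j}$ of Definition~\ref{ieHolder} do.

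With that fix there is no delicate patching. $\calC^{2,\alpha}_{\ie}$ is just the H\"older space rescaled by the distance to $\mathcal R\times\bbT^N$. On balls of that size $u$ is single-valued and harmonic, and it is smooth across the walls by odd reflection. Interior or boundary Schauder estimates there, with rescaled data bounded by $\|\phi\|_{e;2,\alpha}$, upgrade $|u|\le Cr^\mu$ to $u\in r^\mu\calC^{2,\alpha}_{\ie,\Gamma}$. This is more elementary than the paper's alternative appeal to the edge calculus of \cite{Maz}, which is needed only for the polyhomogeneity statements.
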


The function space $\calC^{2,\alpha}_{\ie}(B^3\times\bbT^N;\calI)$ is defined in Definition \ref{ieHolder} below, and $\calC^{2,\alpha}_{\ie,\Gamma}$
consists of the $\Gamma$-equivariant elements, as usual. Similarly, see Definition \ref{eHolder} for $\calC^{2,\alpha}_{e}$.

Proposition \ref{poissonedge} can be proved using $L^2$ techniques, using the eigendecomposition $\{\phi_\ell(y)\}_{\ell=1}^\infty$ for 
$\Delta_{S^2}$ (with corresponding eigenvalues $\{\lambda_\ell\}_\ell$) acting on two-valued functions on $S^2$ to reduce to solving a sequence of 
equations on each eigenmode. Indeed,  
decomposing $u = \sum_\ell u_\ell(r,z)\phi_\ell(y)$ and $\phi = \sum_\ell\psi_\ell(z)\phi_\ell(y)$, then we reduce the problem to solving 
$$
L_\ell u_\ell := \left(\del_r^2 + \frac{2}{r}\del_r - \frac{\lambda_\ell}{r^2}+\Delta_{\bbT^N}\right)u_\ell(r,z) = 0 \qquad \text{with} 
\qquad u_\ell(1,z) = \psi_\ell(z)\in \calC^{2,\alpha}(\bbT^N).
$$
To solve this, there is an integral formula expressing $u_\ell$ in terms of $\psi_\ell$, and subsequently $\sum_\ell u_\ell\phi_\ell$ can be estimated in 
$L^\infty$ using appropriate barrier functions, giving the claimed rate of decay at $0$. This procedure can be viewed as giving the solution via an 
appropriate Poisson extension $u = \mathcal{P}(\phi)$. Alternatively, one can appeal to the systematic theory from \cite{Maz} (and further references 
cited below), which leads to significantly sharper results.

Next, in order to develop the perturbation theory for our setting which mirrors that we used in our proof of Theorem \ref{CHS} (the result 
of \cite{CHS}), we need to develop the corresponding Fredholm theory. In particular, we need to define the basic Hölder spaces on which 
$\Delta_{B^3\times\bbT^N}$ acts, and then we can state the main mapping and regularity results on these spaces. For these we follow \cite{Maz}, 
although in the interests of brevity we do not describe the pseudodifferential operator constructions used to prove these results. 
For simplicity, we refer to this operator simply as $\Delta$ below.

Much as in Definition \ref{bHolder}, the degeneracy structure of the operator serves as a 
guide in these definitions. We do this in two stages. In the first, we consider the simpler edge degeneracy exhibited by $\Delta$ near the edges 
$R_i\times\bbT^N$, but away from $\{0\}\times\bbT^N$, and in the second we consider the `deeper' degeneracy at this inner layer. In both these 
cases we use the expression 
$$
\Delta = \del_r^2 + \frac{2}{r}\del_r + \frac{1}{r^2}\left(\del_\rho^2+\frac{\cos(\rho)}{\sin(\rho)}\del_\rho + \frac{1}{\sin^2(\rho)}\del_\theta^2\right) + \Delta_z
$$
where we have written the Laplacian in terms of the polar variable $r\in [0,1]$ and polar coordinates $(\rho,\theta)$ on $S^2$ near any of the points $p_j$.
(The reader should keep in mind that because of the monodromy around each of the points $p_j$ on $S^2$, there is a genuine singularity at $\rho=0$.)

Thus for the first stage, consider the restriction of $\Delta$ to $\{c<r<1\}$ for some $c>0$. Observe that in this region,
\begin{equation}\label{1st}
\sin^2(\rho)\Delta = \sin^2(\rho)\left(\del_r^2+\frac{2}{r}\del_r+\Delta_z\right) + \frac{1}{r^2}\Big(\sin^2(\rho)\del_\rho^2 + 
\sin(\rho)\cos(\rho)\del_\rho + \del_\theta^2\Big),
\end{equation}
which is an elliptic combination of the vector fields $\rho\del_\rho$, $\rho\del_r$, $\rho\del_{z_i}$, and $\del_\theta$, with coefficients which are smooth 
for $\rho\geq 0$ (note that here we are using that $r>c$). The goal again is to define function spaces which respect the symmetries of these vector fields. 
Observe that since we are bounding $r$ away from $0$, these operators are locally invariant with respect to the dilation $(r,\rho,\theta,z)\mapsto 
(\lambda r, \lambda\rho,\theta,\lambda z)$ and translations $(r,\rho,\theta,z)\mapsto (r+a,\rho,\theta,z+b)$. Therefore, following the same strategy 
as in Definition \ref{bHolder}, we first decompose the space 
$$
X_{\text{loc}} := (c,1)\times [0,\rho_0)\times S^1\times \bbT^N \ni (r,\rho,\theta,z)
$$
into subdomains which are equivalent to one another via these particular dilations and translations. This is achieved by a Whitney cube decomposition. Indeed, for an appropriate 
sequence of centers $(r_i,z_i)$ and dyadic radii $2^{-j}$, we define the (overlapping) Whitney cubes 
$$
Q_{i,j} := \{(r,\rho,\theta,z): |r-r_i|\leq 2^{-j+1},\ \ 2^{-j-1}\leq \rho\leq 2^{-j+1},\ \ \theta\in S^1,\ \ |z-z_i|\leq 2^{-j+1}\}.
$$
The actual `center' of $Q_{i,j}$ is $(r_i,2^{-j},z_i)$. As in the $b$-Hölder setting, note that each $Q_{i,j}$ may be transformed to the standard cube of 
unit size, $Q_{0,1}$, via an appropriate map $D_{i,j}:Q_{0,1}\to Q_{i,j}$, which is a composition of a dilation and translation of the form described above. 
Thus, any function $u$ then corresponds to a sequence of functions $u_{i,j} := D_{i,j}^*(u|_{Q_{i,j}}) \equiv (u|_{Q_{i,j}})(D_{i,j}^{-1}(\cdot))$.

\begin{defn}\label{eHolder}
The space $\calC^{0,\alpha}_{e}((c,1)\times [0,1)\times S^1\times\bbT^N)$ consists of all functions for which 
$$
\|u\|_{e;0,\alpha}:= \sup_{i,j}\|u_{i,j}\|_{0,\alpha}<\infty.
$$
\end{defn}
The norm on the right-hand side here is the standard Hölder norm on $Q_{0,1}$. As before, we define $\calC_e^{k,\alpha}$ to consist of those functions 
such that up to $k$ derivatives with respect to combinations of $\rho\del_\rho$, $\rho\del_r$, $\rho\del_{z_j}$, and $\del_\theta$, lie in 
$\calC^{0,\alpha}_e$. Finally, we also define the weighted spaces $\rho^\nu \calC^{k,\alpha}_e$.

It is immediate from the definition that
$$
\Delta:\rho^\nu \calC^{2,\alpha}_e(X_{\text{loc}};\calI)\longrightarrow \rho^{\nu-2}\calC^{0,\alpha}_e(X_{\text{loc}};\calI)
$$
is bounded for any $\nu\in \R$.

We now come to the analogous definitions on the entire space $B^3\times \bbT^N$, where we also take into account the degeneracy at $r=0$. For 
this observe that 
$$
r^2\sin^2(\rho)\Delta = \sin^2(\rho)\left(r^2\del_r^2+2r\del_r\right) + \left[\sin^2(\rho)\del_\rho^2 + \sin(\rho)\cos(\rho)\del_\rho + 
\del_\theta^2\right] + r^2\sin(\rho)^2\Delta_z
$$
and note that this is an elliptic combination of the vector fields $\rho r\del_r$, $\rho\del_\rho$, $\del_\theta$, and $\rho r\del_{z_i}$, with coefficients 
which are smooth on $\{r\geq 0,\rho\geq 0\}$. It is also useful to write 
\begin{equation}\label{2nd}
	r^2\sin^2(\rho)\Delta = \sin^2(\rho)\left(r^2\del_r^2 + 2r\del_r\right) + \sin^2(\rho)\Delta_{S^2} + r^2\sin^2(\rho)\Delta_z,
\end{equation}
where $\Delta_{S^2}$ is the Laplacian on $S^2$ (acting on sections of $\calI$).

The upshot is that we can treat \eqref{2nd} as analogous to \eqref{1st}, but where the `regular' operator is replaced by the singular operator $\Delta_{S^2}$. 
In other words, we can treat \eqref{2nd} as an edge operator, where the former compact cross-section $S^1$ is now replaced by the singular cross-section
$(S^2,\calI)$ due to the singularity at $\rho=0$. We can therefore give an almost identical definition of the \emph{iterated edge Hölder spaces}. Indeed,
first decompose the full space $B^3\times\bbT^N$ into a union of Whitney cubes 
$$
\widetilde{Q}_{i,j}:= \{(r,y,z)\in [0,1)\times S^2\times \bbT^N: 2^{-j-1}\leq r\leq 2^{-j+1},\ \ y\in S^2,\ \ |z-z_i|\leq 2^{-j+1}\}
$$
(here, $z_i$ is as before). Notice that up to a map $D_{i,j}:\widetilde{Q}_{0,1}\to \widetilde{Q}_{i,j}$ made up from the composition of an appropriate 
dilation $(r,y,z)\mapsto (\lambda r,y,\lambda z)$ and translation $(r,y,z)\mapsto (r,y,z+b)$, these are all equivalent to the standard cube 
$\widetilde{Q}_{0,1}$. Again, write $u_{i,j}$ for the pullback of $u|_{\widetilde{Q}_{i,j}}$ from $\widetilde{Q}_{i,j}$ to $\widetilde{Q}_{0,1}$.
\begin{defn}\label{ieHolder}
The space $\calC^{0,\alpha}_{\ie}(B^3\times\bbT^N)$ consists of all functions for which
$$
\|u\|_{\ie;0,\alpha} := \sup_{i,j}\|u_{i,j}\|_{b;0,\alpha}<\infty.
$$
\end{defn} 
Note that the norms on the right-hand side are the $b$-Hölder norms on $\widetilde{Q}_{0,1}$. In other words, we are taking advantage of the 
iterative structure of the singularity to define these function spaces in terms of the simpler $b$-Hölder spaces at the previous `level' of degeneracy. 
We can of course also define $\calC^{k,\alpha}_{\ie}$ to consist of those functions for which up to $k$ derivatives with respect to combinations of
$\rho r\del_r$, $\rho \del_\rho$, $\del_\theta$, and $\rho r\del_{z_i}$ lie in $\calC^{0,\alpha}_{\ie}$, and finally also the weighted spaces 
$r^\mu\rho^\nu \calC^{k,\alpha}_{\ie}$.

As before, the map
\begin{equation}\label{mapiedge}
\Delta : r^\mu\rho^\nu \calC^{2,\alpha}_{\ie}(B^3\times \bbT^N;\calI) \longrightarrow r^{\mu-2}\rho^{\nu-2}\calC^{0,\alpha}_{\ie}(B^3\times\bbT^N;\calI)
\end{equation}
is bounded.

\begin{remark} In Definition \ref{eHolder} and Definition \ref{ieHolder} (and the corresponding use of Definition \ref{bHolder}) the Hölder semi-norm 
of the two-valued function involves the difference of values of $u$ at two nearby points. This is interpreted in the usual manner, namely by taking 
the minimum of the Hölder quotient between when the two points are on the same branch or on different branches.
\end{remark}

\smallskip

We conclude this subsection by stating the main Fredholm and regularity result we use for this rest of this section.  This is a generalization
of Proposition \ref{prop:fredholm}. However, the extension from the theory of $b$-pseudodifferential operators to the broader class of
iterated edge pseudodifferential operators is not straightforward.  A discussion of the parametrix construction for ``depth $2$'' iterated
edge operators, a class sufficiently broad to include the case studied here, may be found in \cite{MazWit}, but the forthcoming
article \cite{AMU} will contain all details in a form immediately recognizable as covering the present circumstances. 

\begin{prop}\label{polyprop}
Fix $\mu \in (\gamma_1^-,\gamma_1^+)$ and $\nu\in (-3/2, 3/2)$. Then the map
$$
\Delta: r^\mu\rho^\nu \calC^{2,\alpha}_{\ie,D, \Gamma}(B^3\times \bbT^N;\calI)\longrightarrow r^{\mu-2}\rho^{\nu-2}\calC^{0,\alpha}_{\ie,\Gamma}(B^3\times\bbT^N;\calI)
$$
is an isomorphism. Furthermore, if $f\in r^{\mu-2}\rho^{\nu-2}\calC^{0,\alpha}_{\ie,\Gamma}$ is polyhomogeneous at the branching 
set $\mathcal{R}\times \bbT^N$, then the solution $u$ to $\Delta u = f$ is also polyhomogeneous at $\mathcal{R}\times\bbT^N$. 
\end{prop}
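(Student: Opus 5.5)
We plan to prove Proposition~\ref{polyprop} by treating $\Delta$ as a depth-two iterated edge operator, following the scheme of \cite{MazWit, AMU}. The whole argument reduces to two model problems, one for each stratum: the edge stratum $\mathcal{R}\times\bbT^N$ (with $r>0$), and the deeper stratum $\{0\}\times\bbT^N$. Fredholmness on the weighted spaces is governed by the indicial roots and normal operators at these two strata. Throughout we work in the $\Gamma$-equivariant subspace, which removes certain modes.

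\textbf{Edge stratum.} In the coordinates of \eqref{1st}, the indicial operator at $\rho=0$ is $(\rho\del_\rho)^2+\del_\theta^2$ acting on sections of $\calI$. Its indicial roots are $\pm(k+\tfrac12)$. By the Lemma on $\Gamma$-equivariant expansions, the roots $\pm\tfrac12$ are excluded in the $\Gamma$-subspace, because $e^{\pm i\theta/2}$ does not transform correctly under the rotation by $2\pi/3$. The roots closest to $0$ are therefore $\pm\tfrac32$. This explains the window $\nu\in(-3/2,3/2)$.

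The edge normal operator is the Laplacian on $\RR^2\times\RR^{1+N}$ with the $\calI$-twist. We need it to be invertible on $\rho^\nu$-weighted spaces for $\nu$ in this window. We will show this by Fourier transform in the edge variables, which reduces it to a Bessel-type ODE on each angular mode $e^{i(k+1/2)\theta}$ with $|k+\tfrac12|\ge\tfrac32$. Standard Bessel estimates then give injectivity and surjectivity, and the window is exactly where $\nu$ lies between the two indicial roots. This yields an edge parametrix near $\mathcal{R}\times\bbT^N$, away from $r=0$.

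\textbf{Deeper stratum.} Using \eqref{2nd}, the indicial family at $r=0$ is $(r\del_r)^2+r\del_r+\Delta_{S^2}$. Here $\Delta_{S^2}$ is taken as the Friedrichs extension on $L^2_\Gamma(S^2;\calI)$ with domain matched to $\rho^\nu$, which for $\nu\in(-3/2,3/2)$ is independent of $\nu$ by the edge step. Its roots are $\gamma_\ell^\pm$, so the condition $\mu\in(\gamma_1^-,\gamma_1^+)$ puts $\mu$ between the two roots. The normal operator at $\{0\}\times\bbT^N$ is the Laplacian on $\RR^3\times\RR^N$ with branching along the four rays. We would prove it is invertible on the $r^\mu\rho^\nu$ spaces by an $L^2$ argument on the commensurate weighted Sobolev spaces: Fourier transform in $z$ and expand in the eigenfunctions $\phi_\ell$. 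Each mode then becomes a modified Bessel equation in $r$, and the choice of $\mu$ gives invertibility. Boundedness is transferred back to H\"older spaces as in the discussion after Proposition~\ref{compker}.

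\textbf{Global parametrix, injectivity and surjectivity.} We glue the two parametrices into a global one, with a remainder that is compact on the given spaces because of the Dirichlet condition at $r=1$. This shows the map is Fredholm. For injectivity, suppose $u$ is in the kernel. Then $u$ is bounded and is $o(r)$ as $r\to0$ and $o(\rho)$ as $\rho\to0$, so integration by parts is legitimate. This gives $\int|\nabla u|^2=0$, and since $u=0$ at $r=1$ we conclude $u=0$. Surjectivity follows by the duality argument given earlier: the dual weights are $\mu'=3-\mu$ and $\nu'=-\nu$, rescaled as in the Sobolev commensuration, and these still lie in the same symmetric windows. The dual problem is therefore again injective. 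Hence the map is an isomorphism.

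\textbf{Polyhomogeneity.} Since the generalized inverse $G$ is an iterated edge pseudodifferential operator with polyhomogeneous Schwartz kernel, $u=Gf$ is polyhomogeneous whenever $f$ is, by the pushforward theorem. Concretely, we can also iterate the indicial-root expansion: solve away successive terms $\rho^{k+1/2}$ and $r^{\gamma_\ell^+ +m}$, as in Proposition~\ref{prop:reg}, at each face, and at the corner use a product-type expansion.

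The main obstacle is the deeper normal operator. It requires uniform control of the edge problem in the $z$-Fourier variable together with the singular spherical operator. We would first attempt it through the explicit eigenmode and Bessel decomposition.
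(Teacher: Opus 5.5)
Your outline follows the same route as the paper. The paper gives no argument of its own here and simply defers to the depth-two iterated edge parametrix construction of \cite{MazWit} and \cite{AMU}. Your two model problems are exactly what that construction consists of: the $\Gamma$-twisted Laplacian on $\RR^2\times\RR^{1+N}$ with equivariant roots $\pm\tfrac32$, and the deeper normal operator diagonalized by the $\phi_\ell$ together with a Fourier transform in $z$. The same goes for the gluing and the pushforward argument for polyhomogeneity. Two steps, however, need repair.

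\textbf{Dual weight.} The dual weight at $\{0\}\times\bbT^N$ is not $3-\mu$. Pair $\Delta u$ with $v$ against the Riemannian measure $r^2\,dr\,dV_{S^2}\,dz$, where $u\sim r^\mu\rho^\nu$, $v\sim r^{\mu'}\rho^{\nu'}$, and $dV_{S^2}\approx\rho\,d\rho\,d\theta$ near $p_j$. This pairing is borderline exactly when $\mu+\mu'=-1$ and $\nu+\nu'=0$. So $\mu'=-1-\mu$, the reflection about $-\tfrac12$, which is the centre of the roots $\gamma_\ell^\pm=-\tfrac12\pm(\tfrac14+\lambda_\ell)^{1/2}$. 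It is this reflection that preserves $(\gamma_1^-,\gamma_1^+)$. The map $\mu\mapsto 3-\mu$ does not: it sends e.g.\ $\mu=0$ to $3>\gamma_1^+\approx1.83$. As written, your surjectivity step therefore fails, since injectivity at the larger weight $3-\mu$ does not imply injectivity at $-1-\mu$.

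\textbf{Injectivity for negative weights.} When $\mu<0$ or $\nu<0$, a kernel element is not a priori bounded or in $H^1$, so the energy identity is not yet justified. You should use that the window contains no indicial roots. Hence kernel and cokernel are independent of $(\mu,\nu)$ there; equivalently, a kernel element expands starting at $r^{\gamma_1^+}$ and $\rho^{3/2}$. Then run the integration by parts at some $\mu\in(1,\gamma_1^+)$, $\nu\in(1,\tfrac32)$, where it is legitimate.

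With these fixes, injectivity on the whole window plus invariance of the window under $(\mu,\nu)\mapsto(-1-\mu,-\nu)$ gives surjectivity, and the argument goes through.
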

In particular, if $f$ is compactly supported away from $\mathcal{R}\times\bbT^N$, then $u$ is polyhomogeneous at $\mathcal{R}\times\bbT^N$.

In this setting, polyhomogeneity must be interpreted as follows.  The space $B^3 \times \bbT^N$ has an iterated edge singularity, with 
deepest stratum $\{0\} \times \bbT^N$ and intermediate strata $R_j \times \bbT^N$.  This can be blown up to a manifold with corners,
first replacing $B^3 \times \bbT^N$ by $S^2 \times [0,1] \times \bbT^N$ (namely, blow-up the origin in each $B^3$), and then blow
up each of the submanifolds $R_j \times \bbT^N$.  There are now two types of boundary faces: the first, obtained in the first step,
is the set where the radial variable $r$ in $B^3$ vanishes. This is a copy of $S^2$, but blown up at each of the four points $p_j$.
The second step also introduces four new boundary faces, which are the spherical normal bundles of $R_j \times \bbT^N$.  As we 
have been doing above, the variable $\rho$ can be chosen locally near any one of these faces and that face is defined by $\rho = 0$.
In other words, this two-step blow-up of $B^3 \times \bbT^N$ is a manifold with corners of codimension $2$, with boundary
defining function $r$ and $\rho$, and polyhomogeneity then means the existence of polyhomogeneous expansions as either
$r \to 0$ or $\rho \to 0$, and product type polyhomogeneous expansions as both $r, \rho \to 0$. 

This regularity statement is local, namely if $f$ is polyhomogeneous in some relatively open subset $U\cap (\mathcal{R}\times\bbT^N)$, then 
$u$ is polyhomogeneous in that same set.

\subsection{Minimal submanifolds with stratified branching set}\label{sec:non-linear-perturbation}

We now combine the results of the last subsections to produce branched minimal hypersurfaces as two-valued graphs of solutions to the minimal
surface equation over $B^3$ which are small in a suitable norm. In fact, it only involves small notational changes to allow $u$ to take values in 
some $\R^d$, which then yields branched minimal submanifolds of codimension $d$ in $B^3\times\bbT^N\times\R^d$. 
In fact, the reader will see that the modifications needed to treat the case $d > 1$ are only notational, so for simplicity of
presentation, we state the result for arbitrary $d$, but present the proof assuming $d=1$.

The main theorem of this section is the following. Recall that $\Gamma\subset O(3)$ is the symmetry group of the spherical tetrahedron acting 
on $S^2$, with corresponding spherical triangles $\{\mathcal{T}_\ell\}_{\ell=1}^4$ with vertices $\{p_\ell\}_{\ell=1}^4$ and rays 
$\{\mathcal{R}_\ell\}_{\ell=1}$, with $\mathcal{R} = \cup_\ell \mathcal{R}_\ell$. 
\begin{theorem}\label{thm:main-perturbative}
There exists an $\eps  > 0$, depending on the geometric and analytic data, such that if $\phi\in \calC^{2,\alpha}_{e,\Gamma}(S^2\times\bbT^N;\calI)$
has $\|\phi\|_{e;2,\alpha}<\eps$, then there exists a unique section $u\in \calC^{2,\alpha}_{\ie,\Gamma}(B^3\times\bbT^N;\calI)$ 
such that
\[\calH(u) = 0 \qquad \text{and} \qquad u|_{r=1}=\phi.
\]
Here $\mathcal{H}$ is the minimal surface operator. This solution $u$ branches along $\mathcal{R}\times\bbT^N$, 
and vanishes to order at least $3/2$ at $(\mathcal{R}\setminus\{0\})\times\bbT^N$ and to order at least $3/2+\tau$ for some $\tau > 0$ 
at $\{0\}\times\bbT^N$.  
\end{theorem}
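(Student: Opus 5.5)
We prove this by the scheme of Theorem~\ref{CHS}, now in the $\Gamma$-equivariant iterated edge spaces. The decisive difference is that the problem is unobstructed: since every eigenvalue satisfies $\lambda_\ell>2$, every indicial root $\gamma_\ell^+$ at $r=0$ exceeds $1$. In fact $\gamma_1^+>3/2$, so there is no cokernel to project away, and we may prescribe $u|_{r=1}=\phi$ exactly rather than only $\Pi_W(u|_{r=1})$.

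Fix weights $\mu\in(3/2,\gamma_1^+)$ and $\nu\in(1,3/2)$, and let $X:=r^\mu\rho^\nu\calC^{2,\alpha}_{\ie,\Gamma}(B^3\times\bbT^N;\calI)$, with $X_D$ the subspace vanishing at $r=1$ and $Y:=r^{\mu-2}\rho^{\nu-2}\calC^{0,\alpha}_{\ie,\Gamma}$. We split $u=\calP(\phi)+v$, where $\calP(\phi)$ is the harmonic extension of Proposition~\ref{poissonedge} and $v\in X_D$. The first step is to upgrade Proposition~\ref{poissonedge} to $\calP(\phi)\in X$ with $\|\calP(\phi)\|_X\le C\|\phi\|_{e;2,\alpha}$. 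To do this, extend $\phi$ to $\tilde\phi$ supported near $r=1$. Near the edges $R_j\times\bbT^N$ with $r>c$, the $\Gamma$-equivariance kills the $\rho^{1/2}$ coefficients (the argument of the Lemma), so $\tilde\phi$ is an equivariant function vanishing like $\rho^{3/2}$. Correct this by $G(\Delta\tilde\phi)$, where $G$ is the inverse from Proposition~\ref{polyprop}. Equivariance is preserved because $\Delta$ commutes with $\Gamma$.

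The nonlinear equation becomes
\[
F(v,\phi):=\calH(\calP(\phi)+v)=\Delta v+Q(\calP(\phi)+v)=0,\qquad v\in X_D.
\]
Here $Q$ collects the nonlinear terms of the minimal surface operator, which are of the schematic form $a(\nabla w)\,\nabla w\,\nabla w\,\nabla^2 w$, i.e.\ at least cubic, with no products of two second derivatives. We also need $F$ to be $\Gamma$-equivariant. For $d=1$ the graph $x\mapsto \pm w$ is invariant under reflections and under $w\mapsto -w$; the operator $\calH$ is odd in $w$ and commutes with isometries, so $F$ maps equivariant sections to equivariant sections and $Q$ maps $X$ into $Y$.

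The main obstacle is showing that $Q:X\to Y$ is smooth near $0$ with $Q(0)=0$ and $DQ(0)=0$. The natural scaling of the $\ie$-spaces says that $w\in X$ has $\nabla w\sim r^{\mu-1}\rho^{\nu-1}$ and $\nabla^2 w\sim r^{\mu-2}\rho^{\nu-2}$, with Hölder control on unit Whitney cubes after rescaling. Then $|\nabla w|^2\nabla^2 w\in r^{3\mu-4}\rho^{3\nu-4}\calC^{0,\alpha}_{\ie}\subset Y$, because $\mu,\nu>1$ and $r,\rho\le1$. The same extra factor $r^{2(\mu-1)}\rho^{2(\nu-1)}$ gives the multilinear estimates in the rescaled cubes, and smoothness of the coefficient $a$ in $\nabla w$ then follows since $\nabla w$ is small in sup norm. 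The one point requiring care is that the Hölder quotients of two-valued functions (the Remark) behave under products. This holds because the products involved are even in the branch, so they are genuine single-valued functions locally.

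With $D_vF(0,0)=\Delta:X_D\to Y$ an isomorphism by Proposition~\ref{polyprop}, the implicit function theorem gives $\eps>0$ and a unique small $v(\phi)$ with $F(v(\phi),\phi)=0$ for $\|\phi\|_{e;2,\alpha}<\eps$. Then $u=\calP(\phi)+v(\phi)$ solves $\calH(u)=0$ with $u|_{r=1}=\phi$. Membership $u\in X$ gives vanishing of order $\nu$ at the edges and $\mu$ at $r=0$. To sharpen the edge rate to exactly $3/2$, apply Proposition~\ref{polyprop} to $\Delta u=-Q(u)$ and bootstrap. Since $Q(u)$ vanishes faster, $u$ has a polyhomogeneous expansion at $\rho=0$ whose terms are $\rho^{k+1/2}$; the $\rho^{1/2}$ term is excluded by equivariance, so the rate is $\rho^{3/2}$. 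Finally, setting $\tau:=\mu-3/2>0$ gives the rate $3/2+\tau$ at $\{0\}\times\bbT^N$.
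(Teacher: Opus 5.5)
\textbf{Overall.} Your architecture is the paper's: the split $u=\calP(\phi)+v$ with $v$ in the Dirichlet subspace, weights $\mu\in(3/2,\gamma_1^+)$ and $\nu\in(1,3/2)$, smoothness of $\calH$ from the count $r^{3\mu-4}\rho^{3\nu-4}$ versus $r^{\mu-2}\rho^{\nu-2}$, and the implicit function theorem with $\Delta$ inverted by Proposition~\ref{polyprop}.

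\textbf{The gap.} Your first step fails. You assert that $\Gamma$-equivariance forces every $\phi\in\calC^{2,\alpha}_{e,\Gamma}(S^2\times\bbT^N;\calI)$ to vanish like $\rho^{3/2}$ at $p_j\times\bbT^N$. From this you deduce $\|\calP(\phi)\|_X\le C\|\phi\|_{e;2,\alpha}$ for your space $X$. This is false. Equivariance only forces oddness across the faces $\del T_j\times\bbT^N$; it gives no rate at the vertices.

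For a counterexample, measure $\theta$ from an edge and let $\chi$ be a cutoff. Then $\chi(\rho)\rho^{s}\sin(3\theta/2)$ has the right monodromy and reflection symmetries. It lies in $\calC^{\infty}_{e,\Gamma}$ for every $s\ge0$, since its $\rho\del_\rho$ and $\del_\theta$ derivatives are of the same form and its rescaled H\"older norms on Whitney cubes are uniformly bounded. Yet it vanishes only to order $s$, and for $s=0$ it does not vanish at all.

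The argument of the Lemma in Section~\ref{sec:harmonic} works only because an eigenfunction is a bounded solution of an elliptic equation. It therefore has an expansion in the powers $\rho^{k+1/2}$, and equivariance can kill individual coefficients. General edge-H\"older data has no such expansion.

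Traces at $r=1$ of elements of $X$ lie in $\rho^\nu\calC^{2,\alpha}_{e}$, so your estimate cannot hold. For such data the scheme breaks down entirely: at a weight $\nu<1$, the cubic term $|\nabla w|^2\nabla^2w\sim\rho^{3\nu-4}$ is more singular than $\rho^{\nu-2}$, so the nonlinear map no longer lands in your target space.

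\textbf{The repair.} Build the decay into the hypothesis, which is what the paper's proof does implicitly. Its Poisson operator is defined on $\rho^\nu\calC^{2,\alpha}_{e,\Gamma}(S^2\times\bbT^N;\calI)$ with $\nu\in(1,3/2)$, and smallness of $\phi$ should be measured in that weighted norm. With $\phi$ in that space, your construction $\calP(\phi)=\tilde\phi-G\Delta\tilde\phi$ is legitimate. It is the device from the proof of Proposition~\ref{nullspace}, used in place of the paper's eigenfunction expansion. The rest of your argument then coincides with the paper's.

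\textbf{A smaller point.} In the last step you cannot apply the polyhomogeneity clause of Proposition~\ref{polyprop} to $\Delta u=-Q(u)$, since that clause presupposes $Q(u)$ is polyhomogeneous. The bootstrap should use only the improved decay $Q(u)=O(\rho^{3\nu-4})$, with $3\nu-2>\nu$. Iterate this through the edge partial-expansion theory until the weight passes the first equivariant indicial root $3/2$. The paper defers this to the proposition following the theorem.
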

\begin{remark}
Note that we are using the edge spaces $\calC^{2,\alpha}_e$ for $\phi$ here rather than the $b$ spaces. This is only because of the extra torus
factor, so the branching set in $S^2 \times \bbT^N$ is a collection of submanifolds rather than a discrete set.  When the torus factor is absent,
the correct hypothesis is that $\phi \in \calC^{2,\alpha}_{b,\Gamma}$.
\end{remark}
\smallskip
The argument follows the same lines as in Theorem \ref{CHS}, but is even simpler since the problem is unobstructed, so there is no need to restrict 
to a proper subspace of boundary values (with the appropriate symmetries).
\begin{proof}
Using the eigendata for the Laplacian with Dirichlet conditions on $\calT_1 \times \bbT^N$, or equivalently for the Laplacian on $S^2 \times \bbT^N$
on  the subspace of functions satisfying the odd periodicity condition and decaying like $\rho^\nu$ at each $p_j$, we can define the Poisson operator
\[
\calP: \rho^\nu \calC^{2,\alpha}_{e,\Gamma}(S^2 \times \bbT^N; \calI) \longrightarrow r^\mu \rho^\nu \calC^{2,\alpha}_{\ie,\Gamma}(B^3 \times \bbT^N;\calI),
\]
where $v = \calP(\phi)$ is the unique solution to $\Delta v = 0$, $v|_{r=1} = \phi$ satisfying the same odd periodicity condition and 
with $|v| \leq C r^\mu$ as $\mu \to 0$.  Here $3/2<\mu<\gamma_1^+$ and $0<\nu<3/2$.  In fact, because of the non-linear nature of
the problem, it is important to restrict $\nu$ to lie in the interval $(1, 3/2)$.

We next invoke the regularity theory of edge operators \cite{Maz} along with the $\Gamma$-equivariance to assert that along the rays $\mathcal R_j$, i.e., 
as $\rho \to 0$, this solution $v$ admits a polyhomogeneous expansion of the form
\[
v \sim \sum_{j=1}^\infty \rho^{j+1/2} e^{i (j + 1/2) \theta} v_j(r,z).
\]  
The uniformity of this vanishing as $r \to 0$ requires the regularity theory in the iterated edge calculus.  This is a somewhat lengthier
argument, but is carried out in full in \cite{MazWit} in a setting which is even slightly more general than the one here. 

Next, observe that by homogeneity considerations, if $1 < \nu < 3/2$, and if $\delta>0$ is sufficiently small, the minimal surface operator 
defines a smooth map 
$$
\mathcal{M}:r^\mu\rho^\nu \calC^{2,\alpha}_{\ie,\Gamma}(B^3\times\bbT^N;\calI)\cap \{u:\|u\|_{\ie;2,\alpha,\mu,\nu}<\delta\}
\longrightarrow r^{\mu-2}\rho^{\nu-2}\calC^{0,\alpha}_{\ie,\Gamma}(B^3\times\bbT^N;\calI).
$$
Finally, the Jacobi operator is simply the Laplacian on $B^3\times\bbT^N$ (if $u$ was instead $\R^d$-valued, the Jacobi operator would instead be $d$ copies of the Laplacian) and 
by Proposition \ref{polyprop} we know that
$$
\Delta:r^\mu\rho^\nu \calC^{2,\alpha}_{\ie,D,\Gamma}(B^3\times\bbT^N;\calI) \longrightarrow r^{\mu-2}\rho^{\nu-2}\calC^{0,\alpha}_{\ie,\Gamma}(B^3\times\bbT^N;\calI)
$$
is bijective. Now consider the map
$$
\widehat{\mathcal{M}}:\calC^{2,\alpha}_\Gamma(S^2\times\bbT^N;\calI)\oplus r^\mu\rho^\nu \calC^{2,\alpha}_{\ie,D,\Gamma}(B^3\times\bbT^N;\calI) 
\longrightarrow r^{\mu-2}\rho^{\nu-2}\calC^{0,\alpha}_{\ie,\Gamma}(B^3\times\bbT^N;\calI)
$$
defined by
$$
\widehat{\mathcal{M}}(\phi,v):= \mathcal{M}(\mathcal{P}(\phi) + v)
$$	
The differential of $\widehat{\mathcal{H}}$ in the second factor at $(0,0)$ is $\Delta$ and is thus bijective. Hence the implicit 
function theorem proves the existence of a smooth map 
$$
\mathcal{G}: \calU \longrightarrow r^\mu\rho^\nu \calC^{2,\alpha}_{\ie,D,\Gamma}(B^3\times\bbT^N;\calI),
$$
where $\calU$ is a neighborhood of $\phi=0$ in $\calC^{2,\alpha}_\Gamma(B^3\times\bbT^N;\mathcal{I})$, such that
$$
\widehat{\mathcal{M}}(\phi,\mathcal{G}(\phi)) \equiv 0.
$$
(Notice that in this exact conic/product situation, the fact that the range of $\calP$ is complemented is straightforward and does not require
an extra argument.)
This produces the family of solutions $u:=\mathcal{P}(\phi) + \mathcal{G}(\phi)$ satisfying all the conclusions of the theorem. 

As usual with the implicit function theorem, all solutions which are graphs of sufficiently small two-valued functions are obtained this way.
\end{proof}

Solutions constructed in Theorem \ref{thm:main-perturbative} enjoy better regularity properties.
\begin{prop}
The solutions $u$ constructed in Theorem \ref{thm:main-perturbative} are polyhomogeneous at the branching locus $\mathcal{R}\times\bbT^N$ and at $0$. More precisely,
near $R_j \times \bbT^N$, $u$ has an expansion of the form 
\[
u \sim \sum_{m=0}^\infty A_m(r, z) \rho^{ i (3m + 3/2) \theta},
\]
where we have chosen the angular coordinate $\theta$ to vanish on one of the edges of a triangle $T$ having a vertex at $p_j$. Furthermore, both $u$
and each coefficient $a_\ell$ in this expansion as $\rho \to 0$ have polyhomogeneous expansions 
\[
u \sim \sum_\ell B_\ell(\rho, \theta, z) r^{\gamma_\ell^+}, \quad A_m(r,z) \sim \sum_\ell C_\ell(z) r^{\gamma_\ell^+}
\]
with $B_\ell$ polyhomogeneous as $\rho \to 0$ and $C_\ell$ smooth. 
Observe that the exponents $3m + 3/2$ in the expansion as $\rho \to 0$ are precisely those for which $\Gamma$-equivariance holds. 
\end{prop}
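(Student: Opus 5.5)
The approach is a bootstrap: rewrite the minimal surface equation as a Poisson equation with a right-hand side that is quadratically small, then feed it into the regularity half of Proposition~\ref{polyprop}, iterating to gain decay one step at a time. Write the minimal surface equation for the two-valued graph $u=\calP(\phi)+\calG(\phi)$ as
\[
\Delta u = Q(u) := \Delta u - \calM(u),
\]
where $Q(u)$ collects the nonlinear terms. For the minimal surface operator these terms are at least cubic in $\nabla u$ and linear in $\nabla^2 u$, schematically $Q(u)=a(\nabla u)\ast\nabla u\ast\nabla u\ast\nabla^2 u$ with $a$ analytic. By Theorem~\ref{thm:main-perturbative}, $u\in r^\mu\rho^\nu\calC^{2,\alpha}_{\ie,\Gamma}$ with $1<\nu<3/2$ and $3/2<\mu<\gamma_1^+$. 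Hence $\nabla u=O(r^{\mu-1}\rho^{\nu-1})$ and $\nabla^2 u = O(r^{\mu-2}\rho^{\nu-2})$, measured in the natural (unscaled) derivatives. Consequently $Q(u)\in r^{\mu'-2}\rho^{\nu'-2}\calC^{0,\alpha}_{\ie,\Gamma}$, where $\mu'-\mu = 2(\mu-1)>0$ and $\nu'-\nu=2(\nu-1)>0$. So the forcing term is strictly better than $u$ itself at both faces, and it remains $\Gamma$-equivariant.

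The key steps proceed as follows.

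\textbf{Conormality.} Since $\calH$ is invariant under the dilations and translations used to define the Whitney cubes $\widetilde Q_{i,j}$ and $Q_{i,j}$, interior (rescaled) Schauder estimates on each cube give stable regularity under $\rho r\del_r$, $\rho\del_\rho$, $\del_\theta$ and $\rho r\del_z$. Differentiating the equation with these vector fields shows that $u$ is conormal on the blown-up manifold with corners.

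\textbf{First-order expansion.} Subtract $\calP(\phi)$, which is polyhomogeneous by Proposition~\ref{polyprop} since $\phi$ lies away from the faces, and solve $\Delta w=Q(u)$ with Dirichlet data. By the mapping properties of the iterated edge parametrix, the solution $w$ is given by:
\begin{itemize}
\item the terms of the expansion of a model solution with exponents in the indicial sets, namely $\{j+1/2\}$ at the $\rho$-face and $\{\gamma_\ell^+\}$ at the $r$-face, lying between the current weight and the improved weight;
\item plus a remainder in $r^{\mu'}\rho^{\nu'}\calC^{2,\alpha}_{\ie}$.
\end{itemize}
This is exactly the statement that the generalized inverse in Proposition~\ref{polyprop} maps conormal functions with partial expansions to functions with the corresponding partial expansions plus indicial terms.

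\textbf{Iteration.} Substituting this partial expansion back into $Q(u)$ produces a forcing term with a partial expansion, now with remainder of even higher order. Iterating gives expansions to arbitrary order. The exponents generated are sums of the indicial exponents, and nonlinear interactions could also produce logarithmic terms, which the paper already allows.

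\textbf{Equivariance selects the exponents.} At the $\rho$-face, each coefficient must transform by $-1$ under the rotation $\theta\mapsto\theta+2\pi/3$ about $p_j$ and must be odd across $\theta=0$. This forces the angular profiles to be $\sin((3m+3/2)\theta)$ with exponent $\rho^{3m+3/2}$, as in the Lemma for eigenfunctions. At the corner, the product-type expansion comes from applying the $r$-face argument to the coefficients $A_m(r,z)$. Near $r=0$, decompose in the $\Gamma$-equivariant eigenbasis $\phi_\ell$: the indicial roots at the $r$-face are the $\gamma_\ell^+$, since the $\gamma_\ell^-$ are excluded by the weight $\mu>\gamma_1^-$. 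Any coefficient left over after the iteration is smooth in $z$ by conormality in $z$.

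The main obstacle is the iterated edge step at the corner $r=\rho=0$. One must know that the parametrix preserves product-type polyhomogeneity jointly, so that the expansion in $\rho$ is uniform as $r\to0$ and the coefficients $A_m$ themselves expand in $r$. I would cite \cite{MazWit,AMU} for the structure of the Schwartz kernel. I would also check carefully that the nonlinear term's exponents, which involve the loss of one power of $\rho$ in $\nabla^2u$ (going from $\rho^{3/2}$ to $\rho^{-1/2}$), still improve at each stage. This requires $\nu>1$, which is exactly why the proof of Theorem~\ref{thm:main-perturbative} restricts to $\nu\in(1,3/2)$.
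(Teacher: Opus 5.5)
\textbf{Conormality does not follow from rescaled Schauder estimates.} Your overall plan matches the paper's: treat $\sum u_iu_ju_{ij}-|\nabla u|^2\Delta u$ as a forcing term better than $\Delta u$ by $\rho^{2(\nu-1)}$, establish conormality, then read off and iterate the expansion. The conormality step, however, fails as you argue it.

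Rescaled Schauder estimates on the cubes $Q_{i,j}$, $\widetilde Q_{i,j}$, together with differentiating the equation along $\rho\partial_\rho$, $\partial_\theta$, $\rho r\partial_r$, $\rho r\partial_z$, only give stable regularity under those iterated edge vector fields. That is, they give $u\in r^\mu\rho^\nu\calC^{k,\alpha}_{\ie}$ for every $k$. Conormality on the blown-up manifold with corners is stable regularity under the $b$-vector fields $r\partial_r$, $\rho\partial_\rho$, $\partial_\theta$, $\partial_z$, and under $\partial_r$ along $R_j\times\bbT^N$ away from $r=0$. This is the bound $|(\rho\partial_\rho)^j\partial_r^p\partial_\theta^q u|\le C\rho^\nu$ that the paper needs.

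Since $\partial_r=\rho^{-1}(\rho\partial_r)$, every unscaled tangential derivative costs a factor $\rho^{-1}$ in any cube-by-cube estimate, so no local scaling argument can produce it. For example, $\rho^\nu\,(g*\eta_\rho)(r)$, with $\eta_\rho$ a mollifier at scale $\rho$ and $g$ merely bounded, is edge-smooth but not conormal.

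This matters for the rest of your argument. By your own description, the expansion step needs conormal input, so without tangential regularity of $Q(u)$ the iteration never starts. You also place the main difficulty at the corner $r=\rho=0$, but this tangential-regularity issue already arises along $R_j\times\bbT^N$ away from $r=0$.

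\textbf{How the paper handles this.} It uses a global argument. With an edge parametrix $G$ for $\Delta$ (inverse up to a residual error), it writes $u$ as a residual term plus $G\big(\sum u_iu_ju_{ij}-|\nabla u|^2\Delta u\big)$. Because $\nu>1$, the nonlinear term lies in $\rho^{\nu-2+\delta}\calC^{0,\alpha}_{e,\Gamma}$. The commutator argument of \cite{Maz-rsyp} then trades that extra $\rho^\delta$ for extra tangential regularity of the output, and a bootstrap gives full conormality. Only after that does the paper derive the expansion, viewing $\calH(u)=0$ as an ODE in $\rho$ with the now harmless tangential derivatives as lower-order terms; the joint expansion at $r=0$ is deferred to \cite{MazWit}. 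So the restriction $\nu>1$ is needed to get tangential regularity in the first place, not only to make the exponents improve.

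In this equivariant setting you could instead try differentiating along genuine symmetries of the problem, namely $\partial_z$ and the dilation generator. These preserve the zero Dirichlet condition on the faces of $C(T_1)\times\bbT^N$, and the resulting solutions of the linearized equation could be controlled by barriers as in the proof of Theorem~\ref{thm:existence}. Either way, some argument of this kind must be supplied.

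\textbf{A smaller point.} $\phi$ is prescribed on $r=1$, which meets $\mathcal R\times\bbT^N$, and is only $\calC^{2,\alpha}_e$ there. So $\calP(\phi)$ is polyhomogeneous only in $r<1$, by the local form of Proposition~\ref{polyprop}, not because $\phi$ lies away from the faces.
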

\begin{proof}
We first address polyhomogeneity at each branching locus $R_j \times \bbT^N$ away from $\{0\}\times\bbT^N$.   We sketch the proof, which is closely modelled
on one used already in many situations (see \cite{Maz-rsyp} for example).  For simplicity, we omit the $\bbT^N$ factor since this only complicates the notation
but does not cause any difficulties, and also restrict to the codimension $1$ case. 

Recall the precise form of the minimal surface operator
\[
\calH(u) = (1 + |\nabla u|^2) \Delta u - \sum_{i,j} u_i u_j u_{ij}
\]
Recalling that $|u| \leq C \rho^\nu$ for some $1 < \nu < 3/2$, we have that $|\nabla u| \leq C \rho^{\nu-1}$, and hence both $|\nabla u|^2 \Delta u$ 
and $u_i u_j u_{ij}$ blow up like $\rho^{2\nu-2 + \nu-2} = \rho^{3\nu - 4}$, which is smaller than $\rho^{\nu-2}$ as $\nu>1$. In other words, we may treat these like
perturbation terms. 

Now, choose a edge parametrix $G$ for $\Delta$. This is a particular sort of pseudodifferential operator which is adapted to the singular structure of
this problem. The paper \cite{Maz} is a comprehensive reference, but for now, we only need to know a few things about this operator.  First, it
satisfies $G \circ \Delta = I - Q$ where $Q$ is residual, i.e, maps any space $\rho^\sigma \calC^0$ to $\rho^N \calC^\infty$ for any $N \geq 0$, and
\[
G: \rho^{\nu-2} \calC^{0,\alpha}_{e,\Gamma} \longrightarrow \rho^\nu \calC^{2,\alpha}_{e,\Gamma}
\]
along $R_j$, at least away from $r=0$.   Rewrite the minimal surface equation $\mathcal{M}(u) = 0$ as $\Delta u = \sum u_i u_j u_{ij} - |\nabla u|^2 \Delta u$ and apply $G$ to this.
We deduce that $u = Qu + G( \sum u_i u_j u_{ij} - |\nabla u|^2 \Delta u)$.  The right hand side is obviously in $\rho^\nu \calC^{2,\alpha}_{e,\Gamma}$ again,
but in fact there is a slight improvement because this non-linear combination of derivatives of $u$ is bounded by $C \rho^{ \nu-2 + \delta}$ for some $\delta > 0$,
as observed above.   We write this simply as $u = Qu + G f$ where $f \in \rho^{\nu-2+\delta} \calC^{0,\alpha}_{e,\Gamma}$. 

The key idea of \cite{Maz-rsyp} is that this extra factor of $\rho^\delta$ in $f$ can be parlayed into extra regularity with respect to the tangential
variable $r$ (or $(r,z)$ if $\bbT^N$ is included) in the output $Gf$. This involves a further investigation into the mapping properties
of pseudodifferential edge operators, but ultimately reduces to a commutator argument.   The upshot is that after a bootstrap, one obtains full
tangential regularity, so in particular $|(\rho\del_\rho)^j \del_r^p \del_\theta^q u| \leq C \rho^\nu$ for all $j, p, q$, i.e.~$u$ is conormal (cf.~Definition~\ref{polyhom}). The final step is then to produce an expansion. For this we simply write the minimal surface
equation $\calH(u) =0 $ as an ODE in the radial variable $\rho$ with coefficients which may involve derivatives with respect to the tangential
variables. The conormality shows that these derivatives may be considered as `lower order', and we can then derive an expansion for
this ODE in the normal variables, with the tangential variables carried along as parameters.   This produces some expansion for $u$,
and we can then put this equation back into the equation $\calH(u) = 0$ to see which of the terms in this expansion are actually needed,
versus those which cannot actually occur in this expansion.

This argument leads to a full proof of polyhomogeneity along the $R_j \times \bbT^N$. It remains to show that there is an expansion as $r \to 0$,
and for this a similar but more involved inductive bootstrapping must be done.  As noted above in the linear setting, this type of two-step polyhomogeneity 
is proved in a related context in \cite{MazWit}.  This proves the polyhomogenity in both $\rho$ and $r$. 
\end{proof}

\section{Branched minimal hypersurfaces with large boundary data}\label{sec:hypersurface}
In this section we construct solutions of the minimal surface equation in codimension one which may have `large' boundary data, 
in contrast to the small-data solutions constructed in Section \ref{sec:perturbative} (which were also in arbitrary codimension). The argument here
is elementary, using only barriers and symmetry considerations.

We now work in the more general setting where $\Gamma$ is a finite group of rotations acting appropriately on the $(n-1)$-sphere $S^{n-1}$ 
and, as before, we will produce solutions with domain $B^n\times\bbT^N$, for $\bbT^N$ the standard $N$-torus (here, $N\geq 0$).

\subsection{Setup}

We consider an isometric tiling of $S^{n-1}$ by convex isometric polytopes with (orientation preserving) symmetry group $\Gamma\subset 
\text{SO}(n)$. Such tilings exist in all dimensions, for instance those associated to the Platonic solids in $\R^3$. The simplest example in 
general dimensions, and the one we shall primarily consider for the sake of concreteness, is the tiling formed by the spherical projection of 
the standard $n$-simplex. Even more concretely, a key example is the tetrahedral tiling of $S^2$ (which has already been discussed at 
length in Section \ref{sec:perturbative}).

Thus, fix $n\geq 3$ and let $P\subset S^{n-1}$ be a closed convex spherical polytope contained within an open hemisphere such that the 
spherical reflections across each of the codimension one faces of $P$ generate a discrete group of isometries $\Gamma\subset \text{SO}(n)$ 
and that the set of images $\{\gamma(P):\gamma\in\Gamma\}$ tiles the sphere by isometric polytopes with disjoint interiors. We denote 
this tiling of $S^{n-1}$ by $\mathcal{T}^{(n-1)}$, and list its distinct elements as $\{P_j\}_{j=1}^G$.

For $0\leq\ell\leq n-1$ write $\mathcal{T}^{(\ell)}$ be the $\ell$-skeleton of this tiling, i.e.~the collection of all $\ell$-dimensional edges 
of the $P_j$. Each $\mathcal{T}^{(\ell)}$ is a finite collection of $\ell$-dimensional convex spherical polyhedra, and every element of 
$\mathcal{T}^{(\ell-1)}$ is contained in at least two elements of $\mathcal{T}^{(\ell)}$. We also assume that the subset
$$\mathcal{T}^{(n-3)}_{\text{odd}} := \{e\in \mathcal{T}^{(n-3)}: \#\{j: e\in \del P_j\} \text{ is odd}\}$$
is non-empty. Indeed, if $e\in \mathcal{T}^{(n-3)}_{\text{odd}}$ and $P_1$ is a polytope containing $e$, then there is a cyclic sequence 
consisting of an odd number of reflected images of $P_1$ which constitute the entire set of $(n-1)$-faces containing $e$ (this can be 
seen by restricting to a copy of $S^2$ perpendicular to $e$ at any point of its interior).

\begin{remark} The regular simplicial tiling of $S^{n-1}$ from the standard simplex has $\mathcal{T}^{(n-3)}_{\text{odd}} = 
\mathcal{T}^{(n-3)}$, so is always non-empty. Indeed, denoting by $v_0,\dotsc,v_n$ the unit vectors in $\R^n$ which are the 
vertices of the regular $n$-simplex, the faces of this tiling are in bijective correspondence with the set of ordered $n$-tuples 
$\{i_1<i_2<\cdots<i_n\}$, where $i_j\in \{0,\dotsc,n\}$, or, dually, with the complements of these sets, namely with the vertices 
which are the `polars' of each face. By this dual enumeration, the $(n-3)$-skeleton consists of those faces which are dual to the 
set of triples $\{i_1,i_2,i_3\}$ of distinct indices, and the isotropy subgroup of $\Gamma$ which fixes a given element $e\in 
\mathcal{T}^{(n-3)}$ induces the full symmetric group action on the corresponding triple, and hence $\#\{j:e\in \del P_j\} = 3$ is odd.
\end{remark}

\medskip

We now pass to the tiling $\widehat{\mathcal{T}}^{(n)}$ of the unit ball $B^n$, where each element of this tiling is the (unit) cone $C_1(P)$ over some $P\in\mathcal{T}^{(n-1)}$. Thus, each $C_1(P)$ is a convex spherical sector. The $(\ell+1)$-skeleton $\widehat{\mathcal{T}}^{(\ell+1)}$ of this 
tiling of $B^n$ is simply 
$$
\widehat{\mathcal{T}}^{(\ell+1)} = \{C_1(V): V\in \mathcal{T}^{(\ell)}\}.
$$
Thus, we are particularly interested in the $n$-dimensional elements $C_1(P_j)$, their (flat) faces $C_1(F)$ for $F\in\mathcal{T}^{(n-2)}$,
and the `spines' $C_1(e)$ for $e\in \mathcal{T}^{(n-3)}$.
Observe that the boundary of each $C_1(P)$ is a collection of elements of $\widehat{\mathcal{T}}^{(n-1)}$ with the outer `cap' $P\subset S^{n-1}$.

Our aim is to construct two-valued minimal graphs over $B^n\times \bbT^N$. To do this, we first construct a single-valued solution $u_1$ solving the 
minimal surface equation over the product $C_1(P_1)\times\bbT^N$ which vanishes at all the codimension one faces $C_1(F)\times\bbT^N\subset C_1(P_1)\times\bbT^N$, 
$F\in \mathcal{T}^{(n-2)}$. Then we extend this solution by odd reflection across each such codimension 1 face $C_1(F)\times \bbT^N$. The function
obtained this way is only well-defined up to a sign. Indeed, given any sequence of reflections corresponding to a sequence of abutting 
sectors $C_1(P_j)\times\bbT^N$ terminating back at $C_1(P_1)\times\bbT^N$, the function obtained by this chain of successive reflection 
coincides with $\pm\gamma^*u_1$ depending on where the number of steps is even or odd (the sign is $+$ if the number of steps is even and $-$ 
if it is odd) for some $\gamma\in \Gamma_1$, where 
$\Gamma_1$ is the isotropy group of $C_1(P_1)$ in $\Gamma$. Thus, provided $|u_1|$ is $\Gamma_1$-invariant, this extended function gives a 
well-defined two-valued function on $B^n\times \bbT^N$, which branches along $\widehat{\mathcal{T}}^{(n-2)}_{\text{odd}}\times\bbT^N$ (where
$\widehat{\mathcal{T}}^{(n-2)}_{\text{odd}}\subset\widehat{\mathcal{T}}^{(n-2)}$ is the set of unit cones over $\mathcal{T}^{(n-3)}_{\text{odd}}$).

\smallskip

Our main theorem in this section is therefore:

\begin{theorem}\label{thm:main}
Fix an isometric tiling $\{P_\ell\}_\ell$ of $S^{n-1}$ with isometry group $\Gamma$ as above. Let $g\in \calC^2(S^{n-1}\times\bbT^N)$ be any two-valued function 
which obeys:
\begin{enumerate}
\item [\textnormal{(i)}] $g$ is odd under any reflection across the codimension one faces of the principle cells $C(P)$;
\item [\textnormal{(ii)}] $|g|$ is invariant with respect to the isotropy group $\Gamma_P$ of each $P\in\mathcal{T}^{(n-1)}\times\bbT^N$;
\item [\textnormal{(iii)}] $g = |\nabla g| = 0$ on $\mathcal{T}^{(n-3)}\times\bbT^N$.
\end{enumerate}
Then there exists a two-valued function $u\in \calC^{1,\alpha}(B^n\times \bbT^N)\cap \calC^\infty((B^n\setminus
\widehat{\mathcal{T}}^{(n-2)})\times\bbT^N)$ which obeys:
\begin{enumerate}
\item [\textnormal{(a)}] $\mathcal{H}(u) = 0$ strongly on the complement of $\widehat{\mathcal{T}}^{(n-2)}\times\bbT^N$ and weakly on 
$B^n\times\bbT^N$; 
\item [\textnormal{(b)}] $u|_{S^{n-1}\times\bbT^N} = g$;
\item [\textnormal{(c)}] $u$ is average-free (i.e.~$u$ is well-defined up to sign) and $|u|$ is invariant with respect to the isotropy group of each 
$C_1(P)$ for $P\in\mathcal{T}^{(n-1)}$.
\end{enumerate}
Furthermore, $u$ has branch locus $\widehat{\mathcal{T}}^{(n-2)}_{\textnormal{odd}}\times\bbT^N$. Here $\alpha \in (0,1)$ depends only on
 $n$, $N$, and the minimal dihedral angle of $P_1$ at its $(n-3)$-skeleton.
\end{theorem}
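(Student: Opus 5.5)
We solve the minimal surface equation on a single fundamental cell and then extend by odd reflection, as in the outline preceding the statement. Let $\Omega := C_1(P_1)\times\bbT^N$. Its boundary consists of the flat faces $C_1(F)\times\bbT^N$, $F\in\mathcal{T}^{(n-2)}$ a face of $P_1$, together with the outer cap $P_1\times\bbT^N$. On $\del\Omega$ we prescribe single-valued data $\varphi$: $\varphi = g$ on the cap (choosing one branch, which is possible since $g$ is single-valued on $P_1$ by (i)) and $\varphi = 0$ on the flat faces. By (i) and (iii) this data is continuous, and it is $\calC^{1}$-compatible at the corners $C_1(e)$ in the sense that $\varphi$ and its gradient vanish there.

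\textbf{Step 1: existence on the cell.} The domain $\Omega$ is not mean-convex in the usual smooth sense: the flat faces are totally geodesic, but the cell has convex corners along the spines, and the cap is part of a sphere. We therefore obtain the solution by Perron's method, or by a continuity/Leray--Schauder argument on smoothed convex approximations $\Omega_k\nearrow\Omega$.

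Convexity of $C_1(P_1)$ gives the Jenkins--Serrin/Finn solvability condition for arbitrary continuous data. Indeed, $C_1(P_1)$ is convex because $P_1$ is convex and lies in an open hemisphere, and the torus factor is flat, so $\Omega$ has nonnegative boundary mean curvature. Uniform $\calC^0$ bounds come from the maximum principle, $|u_1|\le \sup|g|$. Interior gradient bounds are standard (Bombieri--De Giorgi--Miranda). Boundary gradient bounds come from the barriers below. This yields $u_1\in\calC^\infty(\Omega)\cap\calC^0(\overline\Omega)$ solving $\calH(u_1)=0$ with $u_1=\varphi$ on $\del\Omega$.

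Boundary barriers are built separately near each part of $\del\Omega$:
\begin{itemize}
\item Near the cap and near the interiors of the flat faces, we use the usual distance-function barriers for convex domains.
\item Near a spine $C_1(e)\times\bbT^N$, the cell looks like a wedge of opening $\beta<\pi$, the minimal dihedral angle. There we use barriers of the form $\pm A\,\sigma^{\pi/\beta}\sin(\pi\theta/\beta)$ plus a small correction, where $(\sigma,\theta)$ are polar coordinates normal to the spine. These are super/subsolutions of $\calH$ near $\sigma=0$ because $\pi/\beta>1$ makes the gradient small there.
\item At the origin/deepest strata, i.e.\ near $\{0\}\times\bbT^N$ and at the tips where spines meet the cap, we build barriers from the homogeneous harmonic function $r^{\gamma}\psi_1$, with $\psi_1$ the first Dirichlet eigenfunction of $P_1$ and $\gamma>1$ by the hemisphere comparison used in Section~\ref{sec:harmonic}, again perturbed to be super/subsolutions where $|\nabla u|$ is small.
\end{itemize}
Hypothesis (iii) is exactly what lets the boundary data sit below these barriers near $\mathcal{T}^{(n-3)}\times\bbT^N$.

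\textbf{Step 2: symmetry, reflection and regularity.} Uniqueness for the Dirichlet problem (comparison principle), together with hypothesis (ii), gives that $u_1\circ\gamma = \pm u_1$ for $\gamma\in\Gamma_{P_1}$, so that $|u_1|$ is $\Gamma_1$-invariant.

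Across each flat face the boundary value is $0$. The face is totally geodesic in $\R^n\times\bbT^N$, and the minimal surface equation is invariant under the reflection $u\mapsto -u\circ R$. Hence the Schwarz reflection principle for minimal graphs (boundary regularity up to a flat face with zero data, then odd extension) gives a smooth solution across the interior of each face. Iterating the reflections produces the two-valued $u$ described before the theorem. It is well-defined because of the $\Gamma_1$-invariance of $|u_1|$. It branches exactly where an odd number of cells meet, namely on $\widehat{\mathcal{T}}^{(n-2)}_{\rm odd}\times\bbT^N$. Along even spines the reflections close up to a single-valued function, so there is no branching there. Properties (a)--(c) then follow; the weak equation across the spines holds because $u$ is $\calC^1$ and the spine has codimension two, so it carries zero capacity.

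\textbf{Main obstacle: the $\calC^{1,\alpha}$ estimate at the spines and the origin.} We must show $|u_1|\le C\sigma^{1+\alpha}$ near spines and $|u_1|\le Cr^{1+\alpha}$ near $0$, with $\alpha$ controlled by $\min(\pi/\beta,\gamma)-1$. This is exactly what the barriers in Step 1 give, provided they are genuine super/subsolutions of the nonlinear operator and not merely harmonic. The plan is to first get a crude Lipschitz bound from the barriers. With the resulting small gradient near the singular set, $\calH$ becomes a uniformly small perturbation of $\Delta$ there. We then rescale at dyadic scales and compare with the linear wedge/cone harmonic profiles to obtain the $1+\alpha$ decay of $u_1$ and of $\nabla u_1$. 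The H\"older continuity of $\nabla u$ across the reflected sheets then follows from this decay together with interior Schauder estimates on Whitney cubes, in the spirit of Definition~\ref{eHolder}.
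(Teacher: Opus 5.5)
Your outline is the paper's route. You solve on the single convex cell $C_1(P_1)\times\bbT^N$ with data $g$ on the cap and $0$ on the flat faces, using convexity plus Jenkins--Serrin or approximation. You control the spines with a homogeneous wedge barrier of order $\gamma>1$, get the gradient decay from rescaled elliptic estimates, and finish with uniqueness, odd reflection and zero capacity.

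\textbf{The gap.} The step that fails as you justify it is the spine barrier for large data. Your barrier $A\sigma^{\pi/\beta}\sin(\pi\theta/\beta)$ has slope of order $A\sigma^{\pi/\beta-1}$. So the ``small gradient'' regime is only $\sigma\lesssim(\eps/A)^{\beta/(\pi-\beta)}$, and on the outer edge $\sigma=\sigma_0$ of that region the barrier is at most $\eps\sigma_0$. It therefore cannot be placed above a solution whose slope there is not already small, and increasing $A$ only shrinks the region. Your remedy does not repair this: a crude Lipschitz bound gives a bounded gradient near the spine, not a small one, and smallness of $\nabla u$ there is essentially what is to be proved. (The paper's sketch glosses over the same point: its $O(r^{3\gamma-4})$ term carries a factor $B^3$.)

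\textbf{The fix.} Such barriers are supersolutions at every amplitude, for a structural reason rather than smallness. Write $\mathcal{H}(v)=\Delta v+|\nabla v|^2\,\mathrm{tr}\bigl(D^2v|_{\nabla v^\perp}\bigr)$; the trace is nonpositive wherever $v$ has convex superlevel sets. Take $v=\sigma^\gamma\cos(k\theta)$ on the slightly enlarged wedge $|\theta|<\pi/(2k)$, $k=\pi/(\beta+\eps)$, extended constantly along the spine and the torus. Writing a level curve as $\sigma=h(\theta)$, the function $w=1/h\propto(\cos k\theta)^{1/\gamma}$ satisfies $w''+w\le 0$ whenever $1\le\gamma\le k^2$, so the superlevel sets are convex. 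Meanwhile $\Delta v=(\gamma^2-k^2)\sigma^{\gamma-2}\cos k\theta<0$ for $\gamma<k$. Hence $\mathcal{H}(Av)<0$ on the whole wedge for every $A>0$, and $A$ may be taken large.

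Choose also $\gamma\le 2$. Since $v>0$ on the closed faces, $Av$ then dominates boundary data vanishing only quadratically at the spine tips. Your exact exponent $\pi/\beta$, with a barrier vanishing on the faces, would instead require $\pi/\beta\le 2$; this fails for general $\calC^2$ data when five cells meet at a spine.

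With this in place, the Lipschitz-then-perturbation bootstrap is unnecessary. So is the separate origin barrier built from $\psi_1$, which vanishes on the faces and so would need the same enlargement before it could be compared with $u$ on $\{r=r_0\}$. The wedge barrier along a spine line $\ell$ through $0$ already gives $|u(x)|\le C\,\dist(x,\ell)^\gamma\le C|x|^\gamma$. This also keeps $\alpha$ depending only on the dihedral angle, as the statement asserts.
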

Again, $\mathcal{H}(u):= (1+|\nabla u|^2)\Delta u - u_{ij}u_i u_j$ is the (codimension one) minimal surface operator.

\medskip

Before continuing, we make some remarks.

\begin{enumerate}
\item [(1)] As an intermediate regularity statement, although we only find a solution in $\calC^{1,\alpha}$, it follows from \cite{SW16} that 
in fact $u\in \calC^{1,1/2}$. In fact, we will see a stronger polyhomogeneity statement in Proposition \ref{prop:poly-2}. It also follows that 
the branched minimal hypersurface associated to the graph of $u$ is stable for area under ambient deformations.
\item [(2)] When $n=2$, the $(n-3)$-skeleton of the spherical tiling is empty. However, one may still consider two-valued boundary data 
obtained by extending by odd reflection a given function $g$ on an arc of length $2\pi/k$ in $S^1$, where $k$ is odd. In this case, 
Theorem \ref{thm:main} gives an extension to a two-valued solution $u$ of the minimal surface equation on $B^2_1(0)\times\bbT^N$ 
which has a branch point at $\{0\}\times\bbT^N$. This recovers some of the examples constructed in \cite{SW07} (namely, those which 
are average-free).
\item [(3)] If $n\geq 3$ and $\mathcal{T}^{(n-3)}_{\text{odd}}=\emptyset$, then $u$ decomposes as two single-valued functions, each 
satisfying the minimal surface equation, both of which vanish with horizontal tangent at $0$. In this case, $0$ is not a branch point.
\item [(4)] When the branch set of $u$ is non-empty, one can consider the frequency value at any branch point. Indeed, $u$ must have 
a monotone frequency function at all branch points by \cite{SW16} (however, in our setting the symmetries of $u$ reduce this to a 
frequency function in the classical, single-valued, setting). Similarly to Section \ref{sec:perturbative}, the value of the frequency 
function at $0$ is determined by the Dirichlet eigenvalues of the Laplacian on the spherical polygon $P_1\subset S^{n-1}$. 
In the simplest example, for the tetrahedral tiling of $S^2$ and $N=0$, the constructed minimal surfaces can have a frequency value at $0$ which is not equal to 
a two-dimensional frequency (or an integer), and with branch sets consisting of the product of $4$ rays emanating from the origin 
which meet $S^2$ at the vertices of a regular tetrahedron.
\end{enumerate}

\subsection{Existence of minimal surfaces in convex polyhedra}

Our proof of Theorem \ref{thm:main} is based on giving an existence result for the minimal surface equation (with possibly large boundary data) in convex curvilinear polyhedral domains. This can be viewed as a (mild) extension of a classical result of Jenkins--Serrin \cite{JS} to domains which only need to have piecewise smooth boundaries (see also \cite[Theorem 12.9]{Giusti}). For simplicity of notation we do not include the extra torus factor here, but a quick inspection of the proof shows that the argument below can be modified to include that product case. 

The domains we consider here are the following.

\begin{defn}
We say an open bounded domain $\Omega\subset\R^n$ is a \emph{convex curvilinear polyhedron} if it can be written as a finite intersection
$$
\Omega = \bigcap^G_{j=1}\{\phi_j<0\}
$$
where each $\phi_j\in \calC^\infty(\R^n)$ is convex and furthermore, for any $p\in \del\Omega$, if $\phi_{i_1},\dotsc,\phi_{i_k}$ are all the $\phi_j$ 
which vanish at $p$, we assume:
\begin{enumerate}
\item [(a)] $\nu_j(p):= \nabla\phi_j(p)/|\nabla\phi_j(p)|$, for $j\in \{i_1,\dotsc,i_k\}$ are linearly independent;
and
\item [(b)] for any distinct $j,j^\prime\in \{i_1,\dotsc,i_k\}$, we have $\nu_j(p)\cdot\nu_{j^\prime}(p)\leq 1-\delta$ for some $\delta>0$ which is 
independent of $p$.
\end{enumerate}
\end{defn}
We define the regular part of the boundary, $\del_{\text{reg}}\Omega$, to consist of those points where exactly one $\phi_j$ vanishes. In general, 
we write $\del_\ell\Omega$ for the set of points where precisely $n-\ell$ of the $\phi_j$ vanish. In other words, $\del_\ell\Omega$ should be 
regarded as the $\ell$-skeleton of the boundary.

\smallskip

\textbf{Note:} When the $\phi_j$ are all affine, then $\Omega$ is a standard convex polyhedron.

\smallskip

Our main existence result on such domains is the following:

\begin{theorem}\label{thm:existence}
Let $\Omega\subset\R^n$ be a convex curvilinear polyhedron. Suppose that $g$ is a $\calC^2$ function defined on a neighborhood of $\del\Omega$ 
such that $g = |\nabla g| = 0$ on $\del_{n-2}\Omega$. Then there exists a unique solution $u\in \calC^\infty(\Omega)\cap 
\calC^{1,\alpha}(\overline{\Omega})$ such that:
\begin{enumerate}
\item [\textnormal{(i)}] $\mathcal{H}(u) = 0$ in $\Omega$ and $u=g$ on $\del\Omega$;
\item[\textnormal{(ii)}] $u = |\nabla u| = 0$ on $\del_{n-2}\Omega$, with $|\nabla u(z)|\leq C\dist(z,\del_{n-2}\Omega)^\alpha$.
\end{enumerate}
Here $\alpha = \alpha(\Omega)\in (0,1/2)$ and $C = C(\Omega,\|g\|_{\calC^2})$.
\end{theorem}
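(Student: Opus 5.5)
We follow the classical route for the Dirichlet problem for the minimal surface equation: Leray--Schauder continuity, reduced to a priori $\calC^{1}$ bounds. These come from barriers at the boundary plus the interior gradient estimate. The new feature is that $\del\Omega$ is only piecewise smooth. We therefore first approximate $\Omega$ from inside by smooth convex domains $\Omega_k \uparrow \Omega$, obtained by smoothing $\max_j \phi_j$ (e.g. via $\frac1k\log\sum_j e^{k\phi_j}$). We also extend $g$ to a $\calC^2$ function near $\overline\Omega$. Since each $\del\Omega_k$ is smooth and convex (mean convex), the classical Jenkins--Serrin/Serrin theorem (cf. \cite[Theorem 12.9]{Giusti}) gives smooth solutions $u_k$ of $\mathcal{H}(u_k)=0$ in $\Omega_k$ with $u_k=g$ on $\del\Omega_k$.

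The remaining task is estimates uniform in $k$. First, the comparison principle gives $\sup|u_k|\le \sup|g|$. Second, at regular boundary points we build barriers that are uniform in $k$. Near a point of $\del_{\mathrm{reg}}\Omega$, the functions $g\pm K\,\psi(d)$ are standard Serrin-type barriers for mean convex boundaries; here $d$ is the distance to the supporting convex surface $\{\phi_j=0\}$ and $\psi(d)$ is concave, e.g. $\log(1+Kd)$. These give a uniform gradient bound away from $\del_{n-2}\Omega$. The key point is near the edges $\del_{n-2}\Omega$, where we use the hypothesis $g=|\nabla g|=0$, so $|g|\le C\dist(\cdot,\del_{n-2}\Omega)^2$. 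Near an edge point, $\Omega$ is contained in a wedge $W$ of opening angle $\beta<\pi$; this uses the angle condition (b) and linear independence in (a). In polar coordinates $(s,\theta)$ transverse to the edge, consider $w=A s^{1+\alpha}\sin\big((1+\alpha)(\theta+\theta_0)\big)$. Choose $\alpha<\pi/\beta-1$, and also $\alpha<1/2$ to keep the margins clean, and choose $\theta_0$ so that $w>0$ on a slightly larger wedge. Then $\Delta w = -c\,s^{\alpha-1}\cdot(\text{positive})$ up to curvature errors from the curved faces. For the perturbation terms, $|\nabla w|^2|D^2w|\sim s^{3\alpha-1}\ll s^{\alpha-1}$ as $s\to0$. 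Consequently $w$ is a strict supersolution $\mathcal{H}(w)<0$ in a small neighbourhood $\{s<s_0\}$ of the edge. The operator error coming from the curvature of the faces $\{\phi_j=0\}$ is handled by enlarging the comparison wedge slightly, which is possible because the $\phi_j$ are convex. By choosing $A$ large relative to $\sup|u_k|/s_0^{1+\alpha}$ and $\|g\|_{\calC^2}$, we get $\pm w$ dominating $u_k$ on the boundary of this neighbourhood, and hence $|u_k|\le C s^{1+\alpha}$ there, uniformly in $k$. Lower-dimensional strata $\del_\ell\Omega$ with $\ell<n-2$ are intersections of such edges and are covered by the same argument (a cone rather than a wedge, or simply taking the minimum of the edge barriers).

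Third, we convert these boundary decay bounds into gradient bounds. A boundary height bound of the form $|u_k-g|\le C\dist$ gives $|\nabla u_k|\le C$ on $\del\Omega_k$. The maximum principle for $|\nabla u|$ (Giusti, Ch.~12/13) then gives a global uniform gradient bound. Next, we apply a rescaling argument at scale $s$ near the edges: blow up by $s$ and use interior/boundary Schauder-type $\calC^{1,\alpha}$ estimates for the now uniformly elliptic equation, combined with $|u_k|\le Cs^{1+\alpha}$. This yields $|\nabla u_k|\le C s^\alpha$ and a uniform $\calC^{1,\alpha}(\overline{\Omega_k}\cap K)$ bound. By Arzel\`a--Ascoli we pass to a limit $u$, which is smooth inside by interior regularity and satisfies (i) and (ii). Uniqueness follows from the comparison principle for $\mathcal{H}$ on bounded domains with continuous boundary data.

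The main obstacle is the edge barrier. We must verify that $w$ is a genuine supersolution of the nonlinear operator uniformly near a curved edge with dihedral angle bounded away from $\pi$. This requires the exponent $1+\alpha$ to be both below the critical $\pi/\beta$ and above $1$, so that the nonlinear terms are lower order. This is also where the dependence $\alpha=\alpha(\Omega)$ arises. If the curved-face error proves troublesome, we would first try replacing $s,\theta$ by coordinates adapted to the two level sets $\phi_i=0,\phi_j=0$ and absorbing the errors into a factor $(1-s^{\epsilon})$.
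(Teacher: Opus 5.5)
Your architecture (inner smooth convex approximations $\Omega_k$, Jenkins--Serrin solutions $u_k$, an edge barrier of order $s^{1+\alpha}$ in a supporting wedge, then rescaling at scale $s$ to get $|\nabla u|\le Cs^{\alpha}$) is the paper's second approach. However, the step you yourself identify as the crux does not go through as argued.

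\emph{The edge barrier.} Write $\gamma=1+\alpha$. Your $w=As^{\gamma}\sin(\gamma(\theta+\theta_0))$ has angular frequency equal to its radial exponent, so $w=A\,\mathrm{Im}\big((e^{i\theta_0}(x_1+ix_2))^{\gamma}\big)$ is harmonic: $\Delta w\equiv 0$, not $-c\,s^{\alpha-1}\cdot(\text{positive})$.

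Moreover, ``the nonlinear terms are lower order'' cannot be the mechanism even for a strictly superharmonic profile. Relative to the Laplacian term they have size $|\nabla w|^2\sim A^2s^{2\alpha}$, so they are negligible only on $\{s\lesssim A^{-1/\alpha}\}$, where $w\lesssim A^{-1/\alpha}$. Enlarging $A$ to beat $u_k$ on $\{s=s_0\}$ therefore shrinks $s_0$ and makes the barrier smaller there. The argument then closes only for small slopes, not for the large data the theorem allows.

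What actually makes such barriers work is a sign: $|\nabla w|^2\Delta w-w_iw_jw_{ij}=|\nabla w|^3\div\big(\nabla w/|\nabla w|\big)\le 0$, because the superlevel sets of $w$ are convex. For your harmonic $w$ this gives $\mathcal H(w)=-w_iw_jw_{ij}=-A^3\gamma^3(\gamma-1)s^{3\gamma-4}\sin(\gamma(\theta+\theta_0))<0$ on the whole wedge and for every $A$. So your barrier is in fact valid, just not for the reason you give. The paper's $v=Bs^{\gamma}\cos\big(\pi\theta/(\beta+\eps)\big)$ with $1<\gamma<\pi/(\beta+\eps)$ also has convex superlevel sets, and in addition $\Delta v<0$.

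Finally, build $w$ in flat polar coordinates about the tangent $(n-2)$-plane of the edge at $p$. Convexity of the $\phi_j$ puts $\Omega$ inside the wedge cut out by the two tangent half-spaces, so no curvature error terms arise and your proposed $(1-s^{\epsilon})$ correction is unnecessary.

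\emph{Uniform boundary gradient bound near the edges.} This is a second, smaller gap. Near $\del_{n-2}\Omega$ the smoothed boundaries $\del\Omega_k$ have curvature of order $k$, so Serrin-type barriers built from $\dist(\cdot,\del\Omega_k)$ are not uniform there. The edge bound $|u_k|\le Cs^{1+\alpha}$ also does not give $|u_k-g|\le C\dist(\cdot,\del\Omega_k)$ at points $x$ with $\dist(x,\del\Omega_k)\ll s(x)^{1+\alpha}$. So the chain ``height bound, hence $|\nabla u_k|\le C$ on $\del\Omega_k$, hence a global gradient bound'' is unsupported near the edges. Your rescaling step also presupposes the uniform ellipticity that chain was meant to supply.

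The remedy is the barrier of the paper's first approach. Since $\Omega_k$ is convex, at each $x_0\in\del\Omega_k$ compare $u_k$ with $g\pm\psi(\ell)$, where $\ell$ is the distance to a supporting hyperplane at $x_0$ and $\psi$ is the usual concave logarithmic profile. Because $\ell$ is affine, no boundary curvature enters. The resulting Lipschitz bound therefore depends only on $n$ and $\|g\|_{\calC^2}$, so it is uniform in $k$. This also makes your separate treatment of regular boundary points unnecessary.
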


\begin{proof}
Uniqueness of the solution (even amongst the class of Lipschitz weak solutions) follows from the connectedness of $\Omega$ 
and the strong maximum principle since the difference $w$ of two solutions satisfies the equation $\div(A\nabla w) = 0$ for some uniformly 
elliptic matrix $A$ with bounded coefficients. 
	
Turning to the existence and the estimates, there are two ways to approach this; both are classical, but adapted to accommodate
that $\del\Omega$ has corners. 
	
The first is to construct a Lipschitz solution $u$ following \cite[Theorem 12.9]{Giusti}, and show that it is $\calC^{1,\alpha}$. For this it suffices to 
construct suitable Lipschitz supersolutions to $\mathcal{H}$ at any $p\in\del\Omega$. Namely, for each such $p$, we construct a function $w$ 
(which may depend on $p$, but whose Lipschitz constant does not) which satisfies:
\[
w\geq g\quad \text{on }\del\Omega,\qquad \mathcal{H}(w)\leq 0\quad \text{in }\Omega,\qquad \text{and}\qquad w(p) = g(p).
\]
Indeed, if we position the domain so that $\Omega\subset\{x_n>0\}$ and $p=0$, then we take $w(x) = g(x) +\psi(x_n)$, where
$\psi(x_n) := A^{-1}\log(1+Bx_n) - C(n)A x_n$, $x_n < r$, for some $A> \|g\|_{\calC^2}$ and suitable constant $B$.   This family of
supersolutions leads, in the usual way, to a Lipschitz solution $u$ on $\Omega$.  Standard elliptic regularity results give that $u$ lies in
$\calC^\infty(\Omega)$, but is $\calC^{1,\beta}$ for all $\beta < 1$ up to $\del_{\text{reg}}\Omega$ and Lipschitz near the corners.
	
In the second approach, we choose a family $\Omega_\eps$ of smooth, strictly convex, domains which lie inside $\Omega$ and such that
$\cup_{\eps>0}\Omega_\eps=\Omega$. We assume that $\del\Omega_\eps$ converges smoothly to $\del\Omega$ away from $\del_{n-2}\Omega$. 
By the classical result of Jenkins--Serrin \cite{JS}, there exists a function $u_\eps\in \calC^\infty(\Omega_\eps)\cap \calC^2(\overline{\Omega}_\eps)$ 
such that
$$
\begin{cases}
\mathcal{H}(u) = 0 & \text{on }\Omega_\eps\\
u_\eps = g & \text{on }\del\Omega_\eps.
\end{cases}
$$
It then follows by barrier techniques that $u_\eps$ converges to a limiting solution $u$ to $\mathcal{H}(u) = 0$ in $\Omega$ which is Lipschitz 
on $\overline{\Omega}$ and $\calC^{1,\beta}$ for all $\beta < 1$ up to $\del_{\text{reg}}\Omega$. 
	
With either argument, it remains to show that $u=|\nabla u|=0$ on $\del_{n-2}\Omega$ 
and $|\nabla u(z)|\leq C\dist(z,\del_{n-2}\Omega)^\alpha$. For this, we may argue locally near each point $p\in \del_{n-2}\Omega$, and we do so by 
constructing a local barrier.
	
As before, fix $p \in \del_{n-2}\Omega$ and position $\Omega$ to lie inside the sector 
$$
S_\alpha:=\{(x_1,x_2,x^\prime)\in \R^n: x_1=r\cos(\theta),\ x_2=r\sin(\theta),\ \text{for } r\geq 0\text{ and }|\theta|\leq\beta/2\}
$$
for some $\beta\in (0,\pi)$. Fix constants $\gamma\in (1,\pi/\beta)$ and $0 < \eps \ll 1$, and define
$$
v(r,\theta,x^\prime):= Br^\gamma\psi(\theta) \qquad \text{where}\qquad \psi(\theta):= \cos\left(\frac{\pi}{\beta+\eps}\theta\right).
$$
Then $v\geq Br^\gamma\psi(\beta/2)>0$ on $S_\beta.$   This function $v$ is simply the standard $2$-dimensional barrier for 
the slightly larger sector $S_{\beta+\eps}$, extended to be constant in the remaining directions. We then calculate that
$$
\H(v) = Br^{\gamma-2}\left(\gamma^2-\left({\pi}/{\beta}\right)^2\right)\psi(\theta) + O(r^{3\gamma-4}),
$$
and since $\gamma-2<3\gamma-4$ when $\gamma>1$, the first term dominates for small $r$, and has a negative coefficient. Thus, 
$\H(v)\leq 0$ in $S_\beta\cap \{r\leq r_0\}$.
	
Choosing $B \gg 0$ and $\eta>0$, we then have $v+\eta\geq u$ on $\del(\Omega\cap S_\beta\cap \{r\leq r_0\})$, and so by the maximum principle,
$v+\eta\geq u$ on $\Omega\cap S_\beta\cap\{r\leq r_0\}$. Similarly, $-v-\eta<u$ in this same region. Letting $\eta\searrow 0$ we deduce that
$$
|u(x)| \leq v \leq Cr^\gamma \quad \text{on }\Omega\cap S_\beta\cap \{r\leq r_0\},
$$
and in particular, $u = 0$ on $\del_{n-2}\Omega$. 

To prove the corresponding gradient estimate near $\del_{n-2}\Omega$, write $d(z):= \dist(z,\del_{n-2}\Omega)$.  Fix $d_0 > 0$ and consider any
$z_1\in \Omega$. Setting $d_1:= d(z_1)$ then we assume that $d_1 \leq d_0$ since the gradient estimate when $d_1\geq d_0$ is standard. 
With a constant $c>0$ to be chosen later, we consider the two regions $\dist(z_1,\del\Omega)\geq cd_1$ and $\dist(z_1,\del\Omega)\leq cd_1$
separately.
	
In the first region, rescale by setting $x := (z-z_1)/d_1$ and $\hat{u}(x):= d_1^{-1}u(z_1+d_1x)$. Then $\nabla_x \hat{u}(0) = \nabla_z u(z_1)$. 
Writing $\alpha:= \gamma-1$, since $|u(z)|\leq Cd_1^{1+\alpha}$ in $B_{cd_1}(z_1)$, we know that $\hat{u}(x)\leq Cd_1^\alpha$ in $B_c(0)$. Hence, 
by standard elliptic estimates,
$$
|\nabla_zu(z_1)| = |\nabla_x\hat{u}(0)| \leq \calC^\prime d_1^\alpha.
$$
For the second region, let $y$ be a point in $\del\Omega$ which attains $\dist(z_1,\del\Omega)$. Necessarily $z \in B_{2cd_1}(y)$, and this ball
intersects $\del\Omega$ only in its regular portion (provided $c<1/3$).   The function $\hat{u}(x):= (cd_1)^{-1}u(y+cd_1 x)$ solves the minimal 
surface equation in $B_2(0)$ and assumes the boundary values $\hat{g}(x):= (cd_1)^{-1}g(y+cd_1 x)$. Since $g\in \calC^2$ and $g=|\nabla g|=0$ 
on $\del_{n-2}\Omega$, we have that $|g|\leq Cd_1^2$ and $|\nabla_z g|\leq Cd_1$ on $B_{2cd_1}(y)$, hence $|\nabla_x\hat{g}|\leq Cd_1$. The usual 
(boundary) elliptic estimates for the minimal surface equation on this rescaled region then give $|\nabla_z u(z_1)|\leq \calC^\prime d_1^\alpha$, 
as before.

This completes the proof. 
\end{proof}

\subsection{Proof of Theorem \ref{thm:main}}
We now complete the proof of Theorem \ref{thm:main}. Again, for simplicity we do this only when the torus factor is absent. The extension of
the arguments to include that factor are straightforward and left to the reader.

\begin{proof}
Extend the function $g$ in the statement of the theorem to a neighborhood of the boundary by setting $h = \eta(r)g$ for some cutoff
function $\eta\in \calC^\infty_c((0,\infty))$ with $\eta\equiv 1$ on a neighborhood of $1$. By Theorem \ref{thm:existence}, applied with data $h$, 
there exists a (single-valued) function $u\in \calC^\infty(C_1(P_1))\cap \calC^{1,\alpha}(\overline{C_1(P_1)})$ such that
$$
\begin{cases}
\mathcal{H}(u) = 0 & \text{on }C_1(P_1),\\
u = g & \text{on }\del C_1(P_1).
\end{cases}
$$
Notice that by uniqueness of solutions, $u$ inherits all the same symmetries of $g$. Now one obtains a two-valued solution $\tilde{u}$ by repeated reflection 
across the codimension one faces $C_1(F)$.  Since $u$ vanishes along these faces, the smoothness of $\tilde{u}$ there is clear, and hence $\tilde{u}\in
\calC^\infty(B^n\setminus\widehat{\mathcal{T}}^{(n-2)})$. However, $u$ vanishes to some order $\gamma>1$ along each part of $\widehat{\mathcal{T}}^{(n-2)}$,
so in fact $\tilde{u}\in \calC^{1,\alpha}(B^n)$. Since $\tilde{u}$ solves the minimal surface equation strongly away from $\widehat{\mathcal{T}}^{(n-2)}$, 
and this $(n-2)$-skeleton has $2$-capacity zero, the graph of $\tilde{u}$ is necessarily stationary for the area functional. Moreover, we have determined
enough about the local structure near $\widehat{\mathcal{T}}^{(n-2)}_{\text{odd}}$ so that $\tilde{u}$ necessarily has branch points along this locus.
\end{proof}

One can check that the decay rates at $0$ of the solutions $u$ constructed in Theorem \ref{thm:main} are no smaller than the smallest eigenvalue of the 
Dirichlet problem for the Laplacian on the spherical polytope $P_1$. In fact, just as for the perturbative solutions obtained in Section \ref{sec:perturbative}, 
these solutions also enjoy greater regularity along their branch sets.

\begin{prop}\label{prop:poly-2}
Suppose $u$ is as in Theorem \ref{thm:main}. Then $u$ is polyhomogeneous along the entire branching locus $\widehat{\mathcal{T}}^{(n-2)}$, as well as 
at $0$.
\end{prop}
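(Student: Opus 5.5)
The plan is to reduce the claim to the linear regularity theory already used for Theorem~\ref{thm:main-perturbative} and its polyhomogeneity proposition. The key input is a priori weighted decay of $u$ and its derivatives at every stratum, strong enough that the nonlinear part of $\mathcal{H}$ can be treated as a perturbation. Because of the reflection construction, it suffices to work with the single-valued solution on one cell $C_1(P_1)$. That solution has Dirichlet conditions on the faces $C_1(F)$, and near each spine $C_1(e)$ it sits in a wedge of dihedral angle $\beta=2\pi/k$, with $k$ odd for $e\in\mathcal{T}^{(n-3)}_{\rm odd}$. Near $0$ it sits in the cone $C_1(P_1)$. Since reflection preserves everything, polyhomogeneity for this single-valued function in the appropriate blown-up space is equivalent to polyhomogeneity of the two-valued $u$.

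\textbf{Step 1: weighted a priori bounds.} Near an interior point of a spine, the barrier in the proof of Theorem~\ref{thm:existence} gives $|u|\le C\rho^{\nu}$ for any $\nu<\pi/\beta$, where $\rho$ is the distance to the spine. At $0$, I would build an analogous conic barrier $B r^{\mu}\psi$, where $\psi$ is the first Dirichlet eigenfunction of a slightly enlarged polytope $P_1^\eps\supset P_1$ and $\mu$ lies below the corresponding indicial root. Because $P_1$ sits in an open hemisphere, domain monotonicity gives $\lambda_1(P_1^\eps)>n-1$, so the indicial root exceeds $1$ and $\mu>1$ can be chosen. The same computation as in Theorem~\ref{thm:existence} shows that the cubic error $O(r^{3\mu-4})$ is dominated by the leading term for small $r$. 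Combining the two barriers (a product-type barrier $r^\mu\rho^\nu$-like function near the spines close to $0$) gives $|u|\le C r^\mu\rho^\nu$ with $\mu,\nu>1$. I would then rescale on the Whitney cubes $\widetilde Q_{i,j}$ and $Q_{i,j}$ of Definitions~\ref{eHolder} and \ref{ieHolder}. Interior and boundary Schauder estimates for the minimal surface equation on the unit cube, with Dirichlet data on the flat faces, then upgrade this to $u\in r^\mu\rho^\nu\calC^{2,\alpha}_{\ie}$.

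\textbf{Step 2: conormality and expansion along the spines.} Away from $r=0$ I would follow the proof of the earlier polyhomogeneity proposition verbatim. Write $\Delta u=\sum u_iu_ju_{ij}-|\nabla u|^2\Delta u=:f$. Since $\nu>1$, $f\in\rho^{\nu-2+\delta}\calC^{0,\alpha}_e$. Applying an edge parametrix $G$ and running the commutator bootstrap of \cite{Maz-rsyp} gives full tangential regularity, i.e.\ conormality. Then I would solve the equation as an ODE in $\rho$, with the tangential variables as parameters. The exponents that occur are generated by the wedge Dirichlet indicial roots $(m+1)\pi/\beta$ together with the finitely many sums produced by the nonlinearity.

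\textbf{Step 3: expansion at $0$ (main obstacle).} Here the problem is iterated edge, and for $n>3$ there are further deeper strata: the lower skeleta $C_1(\mathcal{T}^{(\ell)})$ with $\ell<n-3$, where several wedges meet. I would argue by induction on depth. At a stratum of depth $d$, the link is a spherical polytope whose own boundary strata have already been handled by the inductive hypothesis. The model operator there is $r^2\Delta$ with cross-section the Dirichlet Laplacian on that link. Proposition~\ref{polyprop} (in its general depth form from \cite{MazWit,AMU}) supplies an iterated-edge parametrix with polyhomogeneous Schwartz kernel. Repeating the bootstrap of Step~2 with radial weight gain $r^{\mu-2+\delta}$ should give conormality jointly in $(r,\rho)$. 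A Mellin-transform/ODE argument in $r$ with values in the (inductively polyhomogeneous) link eigenfunctions should then give product-type expansions at the corners.

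The hard point is making the tangential bootstrap uniform up to the corner $r=\rho=0$, so that the gain from the nonlinearity occurs simultaneously in both weights. If the general calculus is not available, my fallback is to run the argument in the $L^2$ $b$-setting on the link. Each coefficient would then be obtained by projecting onto the Dirichlet eigenfunctions of $P_1$, and the remainder controlled by the weighted estimates of Step~1.
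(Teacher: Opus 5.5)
Your proposal follows essentially the same route as the paper's proof, which is only a sketch. In both, decay with $\nu\in(1,3/2)$ makes the cubic part of $\mathcal{H}$ a perturbation; an edge parametrix plus the commutator bootstrap of \cite{Maz-rsyp} then gives conormality, the expansion is read off from the ODE in $\rho$, and the expansion at $0$ (and at the deeper strata when $n>3$) is deferred to the iterated edge calculus of \cite{MazWit,AMU}.

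The one place you go beyond the paper is Step~1, which the paper simply takes from the gradient bound in Theorem~\ref{thm:existence}. There, to get the product weight $r^\mu\rho^\nu$ near $0$, the link factor $\Psi$ of a barrier $Br^\mu\Psi$ must satisfy $-(\Delta_{S^{n-1}}+\mu(\mu+n-2))\Psi\gtrsim\rho^{\nu-2}$ near the spines, not merely $\gtrsim\Psi$ as an exact eigenfunction would give. This is what lets it dominate the cubic term $O(B^3r^{3\mu-4}\rho^{3\nu-4})$. Alternatively, you can rescale dyadic annuli and use local edge estimates.
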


\begin{proof}
The proof is almost identical to the one sketched at the end of Section \ref{sec:non-linear-perturbation}. When $n=3$, the proof is identical, and uses
the fact that $|\nabla u| \leq C \rho^{\nu-1}$ for some $\nu \in (1, 3/2)$.   To prove this polyhomogeneity in the general higher dimensional
setting, one must work with iterated edge operators adapted to the higher depth stratification of the skeleton of the tiling of $S^{n-1}$. The overall
structure of the argument is the same, but this now relies on the full iterated edge pseudodifferential calculus developed in \cite{AMU}. 

We recall an observation made earlier that these expansions may include positive integral powers of $\log \rho$ as factors, where $\rho$ is
distance to some stratum. However, these never appear in leading coefficients, and hence do not play any role in any of the arguments.
\end{proof}

We note that the polyhomogeneous expansion along $(\widehat{\mathcal{T}}^{(n-2)}\setminus \widehat{\mathcal{T}}^{(n-3)})\times\bbT^N$ is of the usual 
form \eqref{asymp}, starting with terms $\rho^{3/2}$. The expansion at $0$ involves the terms $r^{\gamma_\ell^+}$, where the $\gamma_\ell^+$ are determined 
by the Dirichlet eigenvalues of $\Delta_{S^{n-1}}$ on $P$.

\section{Compact Examples}\label{sec:compact}

In this section, we demonstrate how ideas similar to those in the previous sections can be used to produce minimal hypersurfaces which are 
\emph{compact without boundary} and have stratified branch sets. We construct these as submanifolds of a warped product 
$S^n\times (-1,1)$, with coordinates $(y,t)$ and with metric 
\begin{equation}
g = dt^2 + f(t)^2\, g_{S^n}.
\label{warped}
\end{equation}
As before, this is done by taking advantage of the action of a finite group $\Gamma$ on $S^n$ and the corresponding tiling $\mathcal{T}^{(n)}
= \{P_j\}_{j=1}^G$ of $S^{n}$ by fundamental domains, where each $P_j$ is a convex polytope lying strictly in a hemisphere, exactly 
as in Section \ref{sec:hypersurface}.

In the previous sections we considered graphs of functions $B^n\to \RR$ as minimal submanifolds in $B^n \times \RR$ with the product metric, 
and the key to the existence theory was the flexibility allowed by imposing arbitrary boundary values at $\del B^n$.   We now construct minimal
submanifolds in $S^n \times (-1,1)$ as two-valued graphs over $S^n$, i.e., which correspond to $\ZZ_2$-functions $u: S^n \to (-1,1)$. 
Because of the maximum principle, it is not possible to find such multi-valued graphs which are minimal with respect to the product metric on 
$S^n\times (-1,1)$. The main observation in this section is that this becomes possible for a suitable warped product metric \eqref{warped}.

More explicitly, consider any $\Gamma$-equivariant function $u$ on $S^n$ with sufficiently small norm (to be specified below)
and its two-sheeted graph 
\[
Y_u = \{ \exp_y( u(y) \del_t): y \in S^n\}.
\]
Of course, this is equivalent to the graph of one sheet of $u$ over $P_1$, a convex spherical polytope in $S^n$ which
is a fundamental domain for the $\Gamma$ action, as long as $u$ vanishes on $\del P_1$, since such a one-sheeted  
graph can be extended by odd reflection.  Both points of view are useful below. 

Our goal is to show that there is a wide class of warping functions $f(t)$ for which this two-sheeted minimal graph exists. We note
that this does not necessarily violate the maximum principle because the warping function adds extra terms to the minimal
surface operator.

We do not treat the most general form of his problem, but instead consider the following restricted form.   Consider a smooth 
strictly positive {\it even} function $f(t)$ for $t\in (-1,1)$, with $f'(0) = 0$ and $f''(0) < 0$. Then $S^n \times \{0\}$ is 
a minimal hypersurface of $S^n \times (-1,1)$ with warped product metric \eqref{warped}.  Define $f_\lambda(t) = f(\lambda t)$, 
$\lambda \geq 1$, and denote the corresponding warped product metrics on $S^n \times (-1,1)$ by $g_\lambda$. 
Our claim is that as $\lambda$ gets larger, so that $(f_\lambda)^{\prime\prime}(0)$ becomes more negative, the Jacobi operator on 
$\Gamma$-equivariant functions becomes unstable. This leads to non-trivial solutions of the non-linear problem by
standard bifurcation theorems. 
 
Let us begin by calculating that if $u\in \calC^2_\Gamma(S^n)$ has small norm, then its normal graph $Y_u$ is a branched 
hypersurface in $S^n\times (-1,1)$, with area with respect to the warped product metric equal to
$$
\calA(u) := \int_{S^n}f(u)^{n-1}\sqrt{f(u)^2+|\nabla u|^2}\, dV;
$$
here $\nabla$ and $dV$ are the gradient and volume form for $S^n$, and.the integral should be taken for both sheets $\pm u$.  The 
first variation of this at $u=0$ gives the minimal surface equation
$$
\calH(u) = \div\left(\frac{\nabla u}{f(u)\sqrt{f(u)^2+|\nabla u|^2}}\right) - \frac{f^\prime(u)}{\sqrt{f(u)^2+|\nabla u|^2}}
\left(n-\frac{|\nabla u|^2}{f(u)^2}\right),
$$
and a short calculation, using that $f'(0) = 0$, gives that the Jacobi operator is
$$
Lw = D\mathcal{H}|_{u=0}(w) = \Delta w - n f^{\prime\prime}(0)w.
$$
In particular, for the family of dilated warping functions $f_\lambda$ introduced above, 
\[
L_\lambda = \Delta  - n\lambda^2 f^{\prime \prime}(0).
\]
These formul\ae\  can also be obtained from the standard formula $L = \Delta + \text{Ric}(\nu,\nu)+|A|^2$ for the Jacobi formula in any 
Riemannian manifold, where $A$ is the second fundamental form of the hypersurface. Here $S^n\times \{0\}$ is totally geodesic, so $A\equiv 0$,
and the sectional curvature of $g$ for any $2$-plane containing the unit normal $\del_t$ equals $-f^{\prime\prime}(0)$.

To state our main theorem, fix the warping function $f$ as above, and write the minimal surface and Jacobi operators associated to $g_\lambda$ 
as $\mathcal{H}_\lambda$ and $L_\lambda$. Now restrict $L_\lambda$ to act on $L^2_\Gamma(S^n)$, the subspace of $\Gamma$-equivariant functions.  
We use the Friedrichs extension of this operator,  starting from the core domain of smooth compactly supported functions with supports disjoint 
from the $(n-2)$-skeleton of of the union of the $\del P_j$ (corresponding to the vertices when $n=2$). 
By virtue of the $\Gamma$-equivariance, functions which vanish at least like $\rho^{3/2}$ at all points of this
$(n-2)$-skeleton are allowed. This Friedrichs extension of $-L_\lambda$ is self-adjoint, and has discrete spectrum with eigenvalues
\[
\mu_j(\lambda) = \mu_j(0) + n\lambda^2 f''(0), \ \ j \geq 0
\]
where $\{\mu_j(0)\}_j$ is the set of eigenvalues for $-\Delta$ acting on $L^2_\Gamma$.   Each $\mu_j(0) > 0$, hence $\mu_j(\lambda) > 0$ for 
$\lambda$ sufficiently small, depending on $j$. However, there is a downward spectral flow: as $\lambda$ increases, each $\mu_j(\lambda)$ 
eventually crosses $0$ and becomes negative.  For $j$ fixed, this occurs when $\lambda_j = \sqrt{-\frac{\mu_j(0)}{nf^{\prime\prime}(0)}}$. 
In any case, there is a sequence of values $\lambda_1< \lambda_2<\cdots\to +\infty$ such that $0$ is an eigfenvalue of $L_\lambda$ on 
$\Gamma$-equivariant functions if and only if $\lambda  \in \{\lambda_j\}_j$. 

When $0<\lambda<\lambda_1$,  there are no non-trivial solutions to $L_\lambda u = 0$ with $u = 0$ on $\del P_1$, and one does not 
expect any `small' minimal graphs over $P_1$ vanishing at $\del P_1$. However, bifurcations in the non-linear family occur 
whenever $\lambda$ passes one of the values $\lambda_j$. 

As we have done earlier, we consider these operators acting on weighted H\"older spaces rather than on $L^2$.    First consider the case $n=2$ 
(which thus produces branched minimal surfaces in $(S^2\times (-1,1), g_\lambda)$, objects which are similar enough to known examples
that their existence is no surprise at all). In this case
\begin{equation}
L_\lambda: \rho^\nu \calC^{2,\alpha}_{b,\Gamma}(S^2) \longrightarrow \rho^{\nu-2} \calC^{0,\alpha}_{b,\Gamma}(S^2)
\label{Gamma-isom}
\end{equation}
is Fredholm of index $0$ provided $\nu \in (-3/2, 3/2)$.  This means in particular that in order to check that this
map is an isomorphism, it suffices to show it is injective. 
If in addition $\nu > 1$, then there exists a neighborhood $\calU$ in this 
domain space such that 
\begin{equation}
\calH_\lambda: \calU \longrightarrow \rho^{\nu-2}\calC^{0,\alpha}_{b,\Gamma}(S^2)
\label{Gamma-nonlin}
\end{equation}
is a smooth mapping.   The key issue when applying the bifurcation theorems below is to identify the singular parameter values.
\begin{lemma}
When $n=2$, the set of values of $\lambda$ for which the differential in $u$ of 
\begin{align*}
&\RR^+ \times \rho^\nu \calC^{2,\alpha}_{b,\Gamma}(S^2)  \longrightarrow \rho^{\nu-2}
\calC^{0,\alpha}_{b,\Gamma}(S^2)\\
& \hspace{4.7em}(\lambda, u)\longmapsto \H_\lambda(u)
\end{align*}
is not an isomorphism is the same as the set of values $\lambda_j$ where $\{0\} \in \mathrm{spec}( -L_\lambda, L^2_{\Gamma})$.
\label{2dsingvalues}
\end{lemma}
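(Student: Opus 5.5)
The differential of $\H_\lambda$ in $u$ at $u=0$ is $L_\lambda=\Delta-n\lambda^2f''(0)$; for $u\neq 0$ in $\calU$ we take the lemma to refer to the linearization along the trivial branch $u\equiv 0$, which is where bifurcation is detected. The plan is therefore to show that, for fixed $\lambda$, the map \eqref{Gamma-isom} is an isomorphism if and only if $0\notin\mathrm{spec}(-L_\lambda, L^2_\Gamma)$. Since \eqref{Gamma-isom} is Fredholm of index $0$ for $\nu\in(-3/2,3/2)$, it is an isomorphism exactly when it is injective. The lemma then reduces to an equality of two nullspaces:
\[
\ker\big(L_\lambda \text{ on } \rho^\nu\calC^{2,\alpha}_{b,\Gamma}(S^2)\big)=\ker\big(-L_\lambda \text{ on its Friedrichs domain in } L^2_\Gamma\big).
\]

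\emph{Hölder kernel lies in the Friedrichs domain.} Suppose $w\in\rho^\nu\calC^{2,\alpha}_{b,\Gamma}$ with $L_\lambda w=0$. By Proposition~\ref{prop:reg}, applied at each conic point $p_j$ (here the double cover has cone angle $4\pi$, and the indicial roots of $\Delta$ on $\calI$ are $\pm(k+1/2)$), $w$ has a polyhomogeneous expansion at $p_j$. This expansion involves only indicial roots greater than $\nu>-3/2$, so only exponents in $\{-1/2,1/2,3/2,\dots\}$ appear. By $\Gamma$-equivariance, exactly as in the lemma on eigenfunctions in $L^2_\Gamma(S^2,\calI)$, the coefficients $e^{\pm i\theta/2}$ of $\rho^{\pm1/2}$ do not transform by $-1$ under rotation by $2\pi/3$ about $p_j$, so they vanish. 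Hence $w=O(\rho^{3/2})$ with $\nabla w=O(\rho^{1/2})$. Such a $w$ lies in $H^1$, and integrating by parts on $S^2\setminus\bigcup_j B_\epsilon(p_j)$ produces boundary terms of size $O(\epsilon^{3/2}\epsilon^{1/2}\epsilon)\to0$. Therefore $w$ lies in the Friedrichs domain and is a $0$-eigenfunction of $-L_\lambda$.

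\emph{$L^2$ kernel lies in the Hölder space.} Conversely, an $L^2_\Gamma$ eigenfunction with eigenvalue $0$ lies in the Friedrichs domain, so it is an $H^1$ section. Its expansion \eqref{asymp} at $p_j$ therefore omits the $\rho^{-1/2}$ terms, which is the defining feature of the Friedrichs extension. $\Gamma$-equivariance again kills the $\rho^{1/2}$ terms, so the expansion begins at $\rho^{3/2}$. Conormality together with this leading order puts the function in $\rho^{3/2}\calC^{2,\alpha}_b\subset\rho^\nu\calC^{2,\alpha}_b$ for every $\nu<3/2$. It is smooth away from the $p_j$ by interior elliptic regularity.

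\emph{Conclusion and main obstacle.} The two nullspaces coincide, so the differential fails to be an isomorphism precisely when $0$ is an eigenvalue, i.e.\ when $\lambda\in\{\lambda_j\}$. The main obstacle is making the asymptotic expansions rigorous for the operator $\Delta+c$ on $S^2$ with the flat bundle $\calI$. I would handle this by passing to the branched double cover of item (2) in the description of the $\Gamma$-equivariance, a torus with conic points of angle $4\pi$, where Propositions~\ref{prop:fredholm} and~\ref{prop:reg} apply directly. The zeroth-order term $-n\lambda^2f''(0)$ is $\rho^2$ times a $b$-operator of order $0$, so it only adds the $r^m$ corrections and does not change the indicial roots.
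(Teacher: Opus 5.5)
Your proposal is correct and follows the paper's argument. The paper likewise uses the index-$0$ Fredholm property of \eqref{Gamma-isom} to reduce to injectivity, and notes that $L^2_\Gamma$ solutions of $L_\lambda u=0$ lie in $\rho^\nu\calC^{2,\alpha}_{b,\Gamma}$. You additionally prove the reverse inclusion: H\"older kernel elements are $O(\rho^{3/2})$ by $\Gamma$-equivariance and hence lie in the Friedrichs domain. The paper's one-line proof leaves this implicit, but it is what shows the map is an isomorphism when $\lambda\notin\{\lambda_j\}$.
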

The proof is immediate since the solutions of $-L_\lambda u = 0$ in $L^2_\Gamma$ necessarily lie in $\rho^\nu \calC^{2,\alpha}_{b,\Gamma}$. 

\begin{theorem}\label{thm:bifurcation}
For each $j\geq 1$, there is an $\eps_j>0$ such that for $\lambda\in (\lambda_j,\lambda_j+\eps_j)$ there exist non-trivial 
$\Gamma$-equivariant solutions $u_{j,\lambda}$ to the minimal surface equation on $S^2$ in the warped product $(S^2 \times (-1,1), g_\lambda)$;
these solutions bifurcate off the main branch $u\equiv 0$.
\end{theorem}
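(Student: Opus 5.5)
The plan is to apply a standard local bifurcation theorem to the smooth family $(\lambda,u)\mapsto \H_\lambda(u)$ from \eqref{Gamma-nonlin}, acting on $X:=\rho^\nu\calC^{2,\alpha}_{b,\Gamma}(S^2)$ with values in $Y:=\rho^{\nu-2}\calC^{0,\alpha}_{b,\Gamma}(S^2)$, where $\nu\in(1,3/2)$. The trivial branch is $\H_\lambda(0)=0$ for all $\lambda$, since $f'(0)=0$ makes $S^2\times\{0\}$ totally geodesic. By Lemma~\ref{2dsingvalues}, the only candidate bifurcation points are $\lambda=\lambda_j$. At each such point $D_u\H_{\lambda_j}(0)=L_{\lambda_j}$ is Fredholm of index $0$ by \eqref{Gamma-isom}, and its kernel is the $\mu_j(0)$-eigenspace $E_j$ of $-\Delta$ on $L^2_\Gamma$, transported into $X$ via the regularity statement underlying Lemma~\ref{2dsingvalues}.

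I would use Crandall--Rabinowitz if $\dim E_j=1$, and Rabinowitz/Krasnoselskii odd-crossing-number bifurcation otherwise. First, I will check the transversality condition. Here $\del_\lambda L_\lambda = -2n\lambda f''(0)\,I$, so for $\phi\in E_j$, $\del_\lambda L_\lambda|_{\lambda_j}\phi = -2n\lambda_j f''(0)\phi$. I need this not to lie in $\mathrm{Ran}\,L_{\lambda_j}$. Since $L_{\lambda_j}$ is self-adjoint on $L^2_\Gamma$, and using the duality/transfer between $b$-H\"older and weighted $L^2$ spaces explained after Proposition~\ref{compker}, the range in $Y$ is characterized as the $L^2$-annihilator of $E_j$. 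The weights are compatible because $\nu-2+\nu>-2$ makes the pairing integrable in two dimensions. Given that, $\langle\phi,\phi\rangle\neq0$ together with $f''(0)<0$ gives transversality.

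When $\mu_j(0)$ is multiple, I will instead use the fact that the eigenvalue $\mu_j(\lambda)=\mu_j(0)+n\lambda^2f''(0)$ crosses $0$ with all $\dim E_j$ eigenvalues moving simultaneously and strictly. The local Leray--Schauder index of $0$ for the compact-perturbation formulation then changes by $(-1)^{\dim E_j}$. This is a genuine change only if $\dim E_j$ is odd, so I expect to need a further symmetry or variational argument here. The cleanest route is to note that $\H_\lambda$ is the gradient of the area $\calA$, so the Rabinowitz variational bifurcation theorem (Böhme--Marino) applies and gives bifurcation regardless of multiplicity. To reduce to finite dimensions, I will perform a Lyapunov--Schmidt reduction onto $E_j$ using the complement provided by the range characterization. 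The resulting reduced function on $E_j$ has a critical point at $0$ whose Morse index jumps by $\dim E_j$ as $\lambda$ crosses $\lambda_j$.

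The restriction to $\lambda\in(\lambda_j,\lambda_j+\eps_j)$ comes from the direction of the branch. In the simple-eigenvalue case I will expand $\lambda(s)$ along the Crandall--Rabinowitz curve $s\mapsto(\lambda(s),s\phi+o(s))$. The quadratic term vanishes by oddness: $\H_\lambda(-u)=-\H_\lambda(u)$ because $f$ is even. So $\lambda'(0)=0$, and the sign of $\lambda''(0)$ is determined by cubic terms of $\H$ involving $f''''(0)$, $f''(0)^2$ and gradient terms. If that sign is unfavourable the statement would need the other side, so I would try first to show supercriticality by a Lyapunov--Schmidt computation with $\phi$. If that fails, I would read the theorem as bifurcation on one side and adjust.

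\textbf{Main obstacle.} I expect the hardest step to be the functional-analytic verification that $\H_\lambda$ is smooth (indeed $\calC^2$) on $X$ for $\nu>1$, with Fredholm linearization whose range is exactly the $L^2_\Gamma$-annihilator of the kernel, in the weighted $b$-spaces near the branch points. Once this is in place, the bifurcation theorems apply directly. Nonlinear terms of the form $|\nabla u|^2\Delta u\sim\rho^{3\nu-4}$ stay in $Y$ because $\nu>1$, mirroring the proof of Theorem~\ref{thm:main-perturbative}.
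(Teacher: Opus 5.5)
Your route to bifurcation at each $\lambda_j$ is essentially the paper's. The paper applies Crandall--Rabinowitz at the simple value $\lambda_1$, with the same transversality computation from $\partial_\lambda L_\lambda=-2n\lambda f''(0)$, and Smoller--Wasserman (a Conley-index, hence variational, theorem) at the other $\lambda_j$. You use Crandall--Rabinowitz at every simple $\lambda_j$ and B\"ohme--Marino/Rabinowitz after a Lyapunov--Schmidt reduction otherwise. Your identification of $\mathrm{Ran}\,L_{\lambda_j}$ with the $L^2_\Gamma$-annihilator of the kernel supplies a step the paper leaves implicit. One small correction: $\mathcal{H}_\lambda$ as written is the mean curvature, which is the $L^2$-gradient of $\calA_\lambda$ only up to a positive factor. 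Run the variational argument with the Euler--Lagrange operator of $\calA_\lambda$ instead; it has the same zeros.

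The step you leave open is existence for \emph{every} $\lambda\in(\lambda_j,\lambda_j+\eps_j)$. This is a genuine gap, but the paper's proof has it too, and it cannot be closed in this generality. Crandall--Rabinowitz produces a curve $s\mapsto(\lambda(s),u(s))$, not a family indexed by $\lambda\in[\lambda_1,\lambda_1+\eps_1)$, and, as you observe, oddness forces $\lambda'(0)=0$. Smoller--Wasserman and B\"ohme--Marino only give that $(\lambda_j,0)$ is a bifurcation point.

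Your plan to prove supercriticality fails for general $f$. Take $n=2$ and normalize $f(0)=1$, as the paper's formula for $L$ does. Then
\[
f(\lambda u)\sqrt{f(\lambda u)^2+|\nabla u|^2}=f(\lambda u)^2+\tfrac12|\nabla u|^2-\tfrac18 f(\lambda u)^{-2}|\nabla u|^4+\dots
\]
Since $\calA_\lambda$ is even in $u$, the Lyapunov--Schmidt correction is $O(s^3)$ and enters the reduced function only at order $s^6$. Hence
\[
E(s,\lambda)=\tfrac{s^2}{2}\,\mu_j(\lambda)\|\phi\|^2+C_4s^4+O(s^6),\qquad C_4=\Big(\tfrac{f''(0)^2}{4}+\tfrac{f''''(0)}{12}\Big)\lambda_j^4\int\phi^4-\tfrac18\int|\nabla\phi|^4 .
\]
For $\lambda>\lambda_j$ we have $\mu_j(\lambda)<0$, so small nontrivial critical points require $C_4>0$. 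But $f''''(0)$ is unconstrained: if $f''''(0)\le-3f''(0)^2$, then $C_4<0$. At the simple value $\lambda_1$ the branch is then subcritical, and the uniqueness part of Crandall--Rabinowitz rules out nontrivial solutions near $0$ for $\lambda\in(\lambda_1,\lambda_1+\eps)$.

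Vertical branches also occur. For $n=1$ and $f=\cos$, the metric $g_\lambda$ has constant curvature $\lambda^2$. The equivariant geodesic graphs are $\tan(\lambda u)=c\sin(\lambda\theta)$, and nontrivial ones exist only at $\lambda=\lambda_j$.

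So what you, and the paper, actually prove is that each $(\lambda_j,0)$ is a bifurcation point. The one-sided conclusion needs an extra hypothesis, for instance $C_4>0$ at a simple $\lambda_j$. Amending the statement is therefore necessary, not a fallback.
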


\begin{figure}[h]
    \centering
    \subfigure{
        \includegraphics[width=0.45\textwidth]{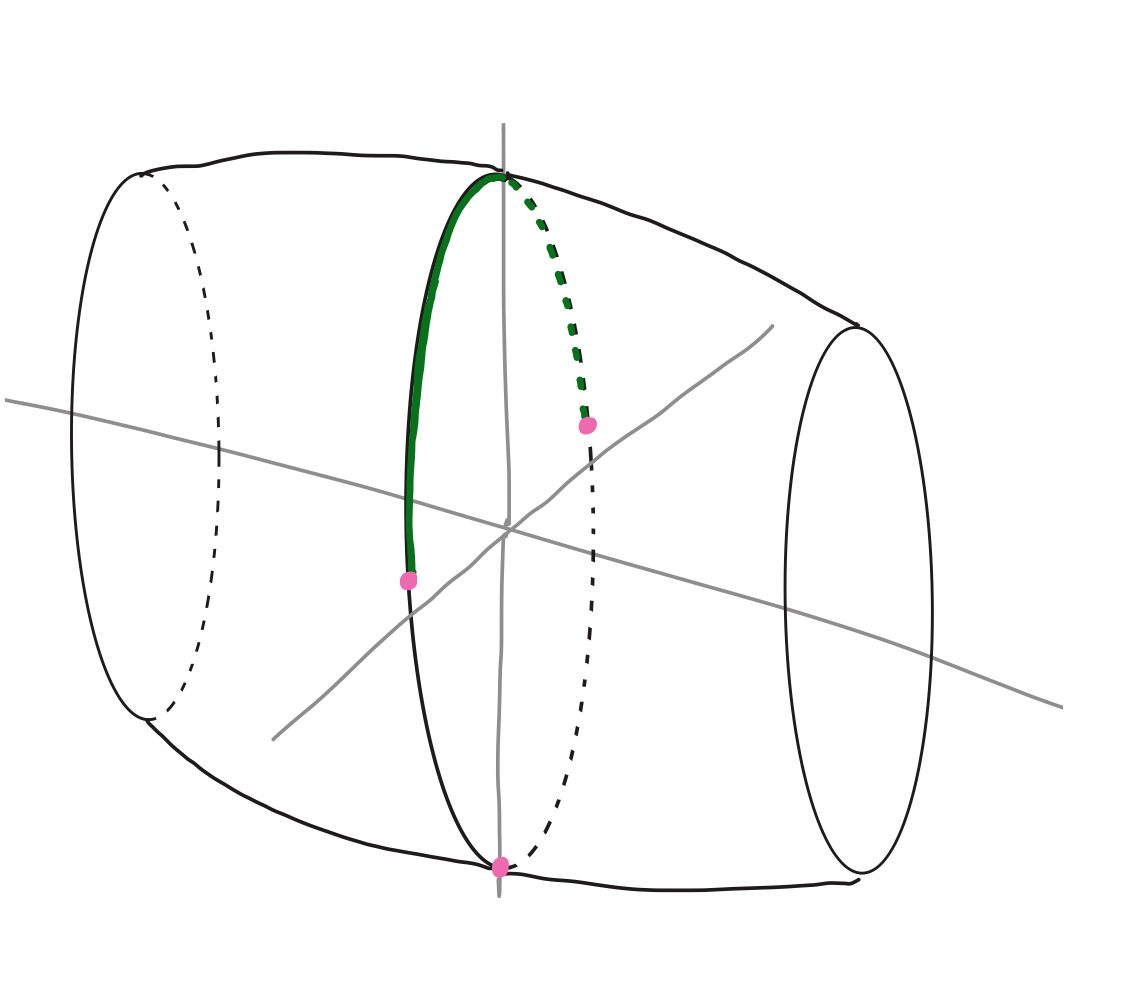}
    }
    \hfill
    \subfigure{
        \includegraphics[width=0.45\textwidth]{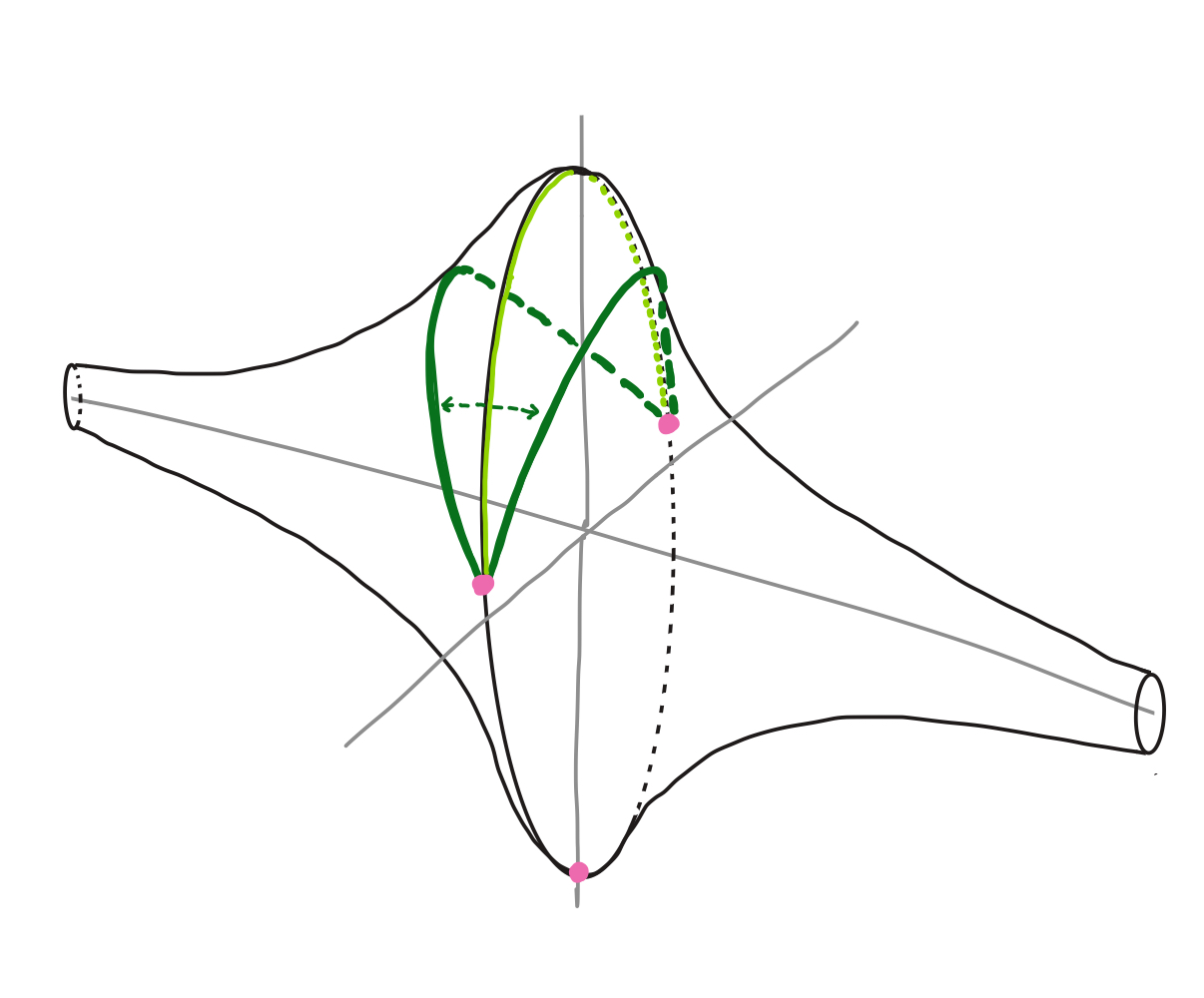}
    }
    \caption{\footnotesize The bifurcation argument for $S^1\times(-1,1)$ and $\Gamma=\Z_3$. On the left $\lambda$ is small and there is only one geodesic connecting the pink points, namely an equator. On the right $\lambda$ is large, so the equator becomes unstable and two stable geodesics appear (in dark green).}
    \label{fig:uv}
\end{figure}

\begin{proof}
The theorem is a consequence of two standard bifurcation theorems. The first (and stronger) of these is due to Crandall--Rabinowitz \cite{CR}. 
It states that, given the family of non-linear $\mathcal{H}_\lambda$, a bifurcation occurs whenever any one of the $\mu_j(\lambda)$ is a simple 
eigenvalue which crosses $0$ transversely as a function of $\lambda$. This occurs, in particular, for $\lambda=\lambda_1$, as indeed the 
smallest eigenvalue has multiplicity one and, if $\phi_1$ is the corresponding eigenfunction, the standard variation formula gives
$$
\left.\frac{d}{d\lambda}\right|_{\lambda=\lambda_1}\mu_1(\lambda) = \int_{S^2}\phi_1(\del_\lambda L_\lambda|_{\lambda=\lambda_1})\phi_1\, dV 
= -2n\lambda_1 f^{\prime\prime}(0)\int_{S^2}|\phi_1|^2\, dV \neq 0.
$$
(This computation can equally well be carried out for a single branch of $\phi_1$ on the fundamental domain $P_1$.) 
The result in \cite{CR} says even more: it asserts also that the new family of solutions $u_{1,\lambda}$ is smooth as a function of $\lambda 
\in [\lambda_1, \lambda_1 + \eps_1)$. 
	
A weaker, but more general, theorem due to Smoller--Wasserman \cite{SW} asserts that given any eigenvalue crossing,
without any assumption about simplicity or transversality of the crossing, there must exist new solutions $u_{j,\lambda}$, when 
$\lambda \in (\lambda_j, \lambda_j + \eps_j)$ for some sufficiently small $\eps_j$. Unlike the previous bifurcation theorem, 
one does not know that these lie in a smooth family of solutions.  This concludes the proof.
\end{proof}

We can reach the same conclusion in higher dimensions as well. The $n=2$ case has been presented first 
because the higher dimensional case involves a more intricate setup, mainly in terms of the function spaces and mapping
properties.  We describe this now.

Given the action of the discrete group $\Gamma$ on $S^n$, we can associate the tiling $\calT$ of the sphere by spherical 
convex polytopes, which we view as a simplicial decomposition.  As such, it has a skeleton $\calT^{(m)}$ of all
dimensions $0 \leq m \leq n$; the $n$-skeleton is simply the entire sphere, and the $(n-1)$-skeleton is the union
of boundary faces of all of the polytopes $P_j$.  Harmonic functions obtained by odd reflection are smooth across these
faces.  The new feature is that we must specify growth or decay at each of the deeper skeleta $\calT^{(m)}$, $m \leq n-2$.
As we have already seen when $m = n-2$, this involves studying the model operator at a point $p \in \calT^{(n-2)}$,
which is simply the Euclidean Laplacian on the sector of opening angle $2\pi/3$ in  the two-dimenional normal bundle to that stratum at $p$.  This model
determines a set of indicial roots $\{\pm\frac{1}{2}, \pm\frac{3}{2}, \ldots \}$, which are the possible rates of
blow up or decay of $\ZZ_2$-harmonic functions near $p$.  Imposing $\Gamma$-equivariance further winnows this
list to $\{\pm\frac{3}{2},\pm\frac{9}{2}, \ldots \}$, and the two key facts we used above in the $2$-dimensional case are
that the minimal surface operator is a smooth map from functions with small norm which decay at fixed rate $\nu \in (1, 3/2)$ to
functions blowing up like $\rho^{\nu-2}$, and furthermore, that the Jacobi operator $L_\lambda$ is Fredholm between these spaces.
We now explain how to generalize these two facts in the higher dimensional setting.

Consider a point $q$ which lies in $\calT^{(m)} \setminus \calT^{(m-1)}$. In other words, $q$ lies in the interior of one of the
lower dimensional convex spherical polytopes which arise as corners of the original polytopes $P_j$.  The interior to
any polytope $P_j$ which contains $q$ on its boundary is modelled as a sector inside the $(n-m)$-dimensional Euclidean
fiber of the normal bundle $N_q \calT^{(m)}$. This sector is described as an exact cone $C(Q)$, where $Q$ is
a convex spherical polytope in $S^{n-m-1}$.   The Euclidean Laplacian on this cone takes the form
\[
\del_\rho^2 + \frac{n-m-1}{\rho} \del_\rho + \frac{1}{\rho^2} \Delta_Q,
\]
where (abusing language slightly for the moment), the variable $\rho$ now denotes the radial variable in this cone. If the
eigenvalues of $\Delta_Q$ with Dirichlet boundary conditions at $\del Q$ are written as $\{\mu_j^{(n-m-1)}\}$, then
the indicial roots of this operator are equal to
\[
(\gamma_j^{(n-m-1)})^{\pm} = \frac{2 + m-n}{2} \pm \frac12 \sqrt{ (n-m-2)^2+ 4\mu_j^{(n-m-1)}}.
\]
These are the expected rates of growth or decay of solutions to $\Delta u = 0$ (or $L_\lambda u = 0$) at $\calT^{(m)}$. 
We have embellished the $\mu$'s and $\gamma$'s with superscripts to indicate the relevant dimensions.
Now observe that due to its convexity, $Q$ lies strictly inside a hemisphere of $S^{n-m-1}$, hence $\mu_0^{(n-m-1)} > n-m-1$. With
a bit of arithmetic, this leads to the estimate that
\[
(\gamma_0^{(n-m-1)})^- < - (n-m-1) < 1 < (\gamma_0^{(n-m-1)})^+.
\]

We can now assemble all this information.  The first point is that we can define iterated edge H\"older spaces with multi-weights
\[
\rho_{n-2}^{\nu^{(n-2)}} \rho_{n-3}^{\nu^{(n-3)}} \cdots \rho_0^{\nu^{(0)}}\calC^{k,\alpha}_{\ie}(S^n; \calI)
\]
to consist of all $\calC^{k,\alpha}_{\ie}$ sections of $\calI$ which decay like $\rho_m^{\nu^{(m)}}$ at $\calT^{(m)}$. Here the $\rho_m$
are smoothed versions of the distance functions to $\calT^{(m)}$, and the $\nu^{(m)}$ are the weight parameters at this stratum.
For simplicity, we write the weight prefactor as $\vec{\rho}^{\, \vec{\nu}}$.  The indicial root calculations above can be used (cf.\ \cite{AMU}) to prove the following two facts:
\begin{lemma} The map
\[
L_\lambda:  \vec{\rho}^{\, \vec{\nu}} \calC^{2,\alpha}_{\ie,\Gamma}(S^n) \longrightarrow \vec{\rho}^{\, \vec{\nu}-2}\calC^{0,\alpha}_{\ie,\Gamma}(S^n)
\]
is Fredholm whenever $(\gamma_m^{(n-m-1)})^- < \nu_m < (\gamma_m^{(n-m-1)})^+$ for all $m$ (this is the analogue of $\nu\in (-3/2,3/2)$).
Here $\vec{\nu}-2$ is the multi-weight obtained by subtracting $2$ from each entry of $\vec{\nu}$. 
Furthermore, if $L_\lambda u = 0$ for $u \in L^2_\Gamma$, then $u \in \vec{\rho}^{\, \vec{\nu}}\calC^{2,\alpha}_{\ie,\Gamma}$ for any $\vec{\nu}$
in this range. 
\end{lemma}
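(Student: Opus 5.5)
The plan is to reduce both assertions to the standard structure theory for iterated edge operators (as in \cite{AMU}, and \cite{MazWit} for depth two), proceeding by induction on the depth of the stratification of $S^n$ by the skeleta $\calT^{(m)}$. The zeroth-order term $-n\lambda^2 f''(0)$ is bounded and does not change the normal operators or indicial roots, so $L_\lambda$ can be replaced by $\Delta_{S^n}$ throughout. The key structural input is the local model at a point $q \in \calT^{(m)}\setminus\calT^{(m-1)}$. Acting on $\Gamma$-equivariant sections, $\rho_m^2 L_\lambda$ is to leading order the product of the Euclidean Laplacian on $\RR^m$ (tangent to the stratum) with the conic operator $\rho^2\del_\rho^2 + (n-m-1)\rho\del_\rho + \Delta_Q$ on $C(Q)$. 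Here $\Delta_Q$ carries Dirichlet conditions on $\del Q$, which is exactly what $\Gamma$-equivariance forces after restricting to a single sector.

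\emph{Step 1 (Fredholmness).} By induction on the codimension $n-m$, I first treat the link operator $\Delta_Q$ on the stratified space $Q\subset S^{n-m-1}$. By the inductive hypothesis, together with the Friedrichs/equivariance discussion used for $n=2$, this operator is self-adjoint with discrete spectrum on weighted spaces whose weights lie in the permitted windows at the deeper strata of $Q$. This gives the indicial roots $(\gamma_j^{(n-m-1)})^\pm$ computed above, and the weight $\nu_m$ lies strictly between the two roots with $j=0$. Next I construct the model inverse (the edge normal operator) on $\RR^m\times C(Q)$. Using a Fourier transform in $\RR^m$, this reduces to a Bessel-type ODE in $\rho|\eta|$ on each eigenmode of $\Delta_Q$. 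Invertibility in the weight window follows from the same Mellin/indicial argument as in Proposition~\ref{prop:fredholm}, since $\nu_m$ avoids all indicial roots. Patching these local inverses with the interior parametrix gives a parametrix $G$ with $L_\lambda G = I - R_1$ and $GL_\lambda = I-R_2$, where the remainders are compact on the iterated edge H\"older spaces. Compactness holds because $S^n$ is compact and the remainders gain both weight and regularity. This is the main obstacle: one must verify that the normal operators at every stratum are invertible simultaneously and compatibly, which is exactly the iterated edge calculus of \cite{AMU}. I would quote that machinery and check its hypotheses, namely the indicial root computations and the convexity bound $\mu_0^{(n-m-1)}>n-m-1$, which keeps the window nonempty and containing $1$.

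\emph{Step 2 (regularity of $L^2$ kernel elements).} Let $u\in L^2_\Gamma$ with $L_\lambda u=0$. Applying the parametrix gives $u = R_2 u$, so $u$ has full iterated edge regularity. For the weights, I expand $u$ locally at each stratum, working from the top stratum $\calT^{(n-2)}$ downward. Since $u$ lies in the Friedrichs domain, it cannot contain the growing terms $\rho^{(\gamma_j)^-}$, because $(\gamma_0)^-<-(n-m-1)$ makes those terms fail to be in $H^1$ (indeed in $L^2$ for the relevant range). The remaining terms then start at $\rho_m^{(\gamma_0)^+}$, so $u\in\rho_m^{\nu_m}\calC^{2,\alpha}$ for every $\nu_m<(\gamma_0)^+$. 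The justification is the polyhomogeneity result analogous to Proposition~\ref{prop:reg} and Proposition~\ref{polyprop}: a solution lying in a lower weighted space improves to the next indicial root, and the equivariance has already removed every root below $(\gamma_0)^+$.
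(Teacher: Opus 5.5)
Your proposal is correct and follows essentially the same route as the paper. The paper proves this lemma only by appealing to the iterated edge calculus of \cite{AMU}, using the indicial root computation and the convexity bound $\mu_0^{(n-m-1)}>n-m-1$ (which gives $(\gamma_0^{(n-m-1)})^- < -(n-m-1) < 1 < (\gamma_0^{(n-m-1)})^+$) as its inputs, and your induction on depth, Bessel-type normal operators, and exclusion of the $\gamma^-$ terms by $L^2$-integrability simply unpack that citation. One small correction: at the edge strata ($m\ge 1$), the normal operator is invertible because $\nu_m$ lies in the window $\bigl((\gamma_0^{(n-m-1)})^-,(\gamma_0^{(n-m-1)})^+\bigr)$ (injectivity above the lower end via the $K$-Bessel solutions, surjectivity below the upper end by duality), not merely because $\nu_m$ avoids the indicial roots, which suffices only at the conic vertices $\calT^{(0)}$.
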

We then also have:
\begin{lemma}
If each $\nu_m \in (1,  (\gamma_0^{n-m-1})^+)$, then 
\[
\calH: \calU \longrightarrow \vec{\rho}^{\, \vec{\nu}-2}\calC^{0,\alpha}_{\ie,\Gamma}(S^n)
\]
is smooth, where $\calU$ is a neighborhood of $0$ in $\vec{\rho}^{\, \vec{\nu}}\calC^{2,\alpha}_{\ie,\Gamma}(S^n)$. 
\end{lemma}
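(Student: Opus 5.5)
The plan is to write $\calH_\lambda(u)$ as the Jacobi operator plus a remainder, $\calH_\lambda(u) = L_\lambda u + \mathcal{Q}(u)$, and show that each of the two pieces maps smoothly into the target space. Concretely, $\mathcal{Q}(u)$ is a smooth function of $(u,\nabla u)$ whose dependence on $\nabla^2 u$ is linear, with coefficients vanishing at least quadratically in $(u,\nabla u)$. This follows from expanding the formula for $\calH$ given above: the divergence term produces $\nabla^2 u$ multiplied by smooth functions of $(f(u), f'(u), |\nabla u|^2)$. The term involving $f'(u)$ is also fine, since $f'(0)=0$ gives $f'(u) = f''(0)u + O(u^2)$.

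\textbf{Linear part.} The map $L_\lambda = \Delta - n\lambda^2 f''(0)$ is bounded
\[
\vec{\rho}^{\,\vec{\nu}}\calC^{2,\alpha}_{\ie,\Gamma} \to \vec{\rho}^{\,\vec{\nu}-2}\calC^{0,\alpha}_{\ie,\Gamma}.
\]
The Laplacian is built from iterated edge vector fields divided by $\rho^2$. The zeroth order term maps into $\vec{\rho}^{\,\vec\nu}\calC^{0,\alpha}_{\ie}$, which sits inside $\vec{\rho}^{\,\vec\nu-2}\calC^{0,\alpha}_{\ie}$ because the $\rho_m$ are bounded.

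\textbf{Nonlinear part.} This is a homogeneity count, carried out locally on each Whitney-type cube of the iterated decomposition near a point of $\calT^{(m)}\setminus\calT^{(m-1)}$. On such a cube, with local scale $\rho$ (the product of the relevant $\rho_k$), an element of the unit ball of the domain satisfies
\[
|u| \lesssim \rho^{\nu}, \qquad |\nabla u| \lesssim \rho^{\nu-1}, \qquad |\nabla^2 u| \lesssim \rho^{\nu-2},
\]
together with the corresponding scaled H\"older bounds. Here $\nu$ stands for the combined weight $\prod_k \rho_k^{\nu_k}$.

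Since each $\nu_m > 1$, we get $|\nabla u| \lesssim \rho^{\nu-1}\to 0$, so $\nabla u$ is bounded and small in $\calC^{0,\alpha}_{\ie}$ after rescaling. Quadratic expressions such as $|\nabla u|^2\nabla^2 u$, $u_iu_ju_{ij}$ and $u\,\nabla^2 u$ are then of size $\rho^{3\nu-4}$ or $\rho^{2\nu-2}$. Each of these is bounded by $\rho^{\nu-2}$ times a positive power of $\rho$. The precise requirement at each stratum is $2\nu_m - 2 \geq 0$, which holds exactly when $\nu_m \geq 1$. Thus $\mathcal{Q}(u)$ lies in the weighted target space, with norm controlled by $\|u\|^2$.

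Smoothness of the map then follows from the standard argument for Nemytskii operators on H\"older spaces. Rescale each cube to the unit model cube. There the rescaled function $\rho^{-\nu}u$ and its rescaled derivatives are uniformly $\calC^{0,\alpha}$, so the nonlinearity is a smooth function of finitely many $\calC^{0,\alpha}$ quantities with uniform bounds. Its Fr\'echet derivatives are polynomial expressions estimated in the same way, uniformly over cubes.

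\textbf{Main obstacle.} I expect the hardest step to be making the scaling uniform near the deeper strata, where several $\rho_k$ vanish simultaneously. The norms there are defined iteratively, with $b$/edge norms on the model cubes whose cross-sections are themselves singular. I would argue by induction on depth. At depth $m$, the rescaled cube is a product of a bounded region with the cone $C(Q)$ over a lower-dimensional polytope, and the induction hypothesis supplies the multiplicative estimate
\[
\|vw\|_{\vec\rho^{\,\vec a+\vec b}\calC^{0,\alpha}_{\ie}} \lesssim \|v\|_{\vec\rho^{\,\vec a}}\|w\|_{\vec\rho^{\,\vec b}}
\]
on that cross-section. Combined with the fact that the edge vector fields commute appropriately with the weights, this reduces the full statement to the single-stratum computation done above.
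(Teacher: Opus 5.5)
Your proposal is correct and follows the paper's route. The paper gives no separate proof of this lemma beyond the ``homogeneity considerations'' of Section~\ref{sec:perturbative}: with $|u|\lesssim\rho^{\nu}$, the nonlinear terms $|\nabla u|^2\Delta u$ and $u_iu_ju_{ij}$ are $O(\rho^{3\nu-4})$, which gains a factor $\rho^{2\nu-2}\to 0$ over the target weight $\rho^{\nu-2}$ because $\nu>1$. Your Whitney-cube rescaling and Nemytskii argument fills in the details the paper leaves implicit; you should also record that $\calH$ is odd in $u$ because $f$ is even, since that is what makes it act on $\Gamma$-equivariant sections of $\calI$ at all.
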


Putting these two facts together, and using the bifurcation theorems in exactly the same way, we obtain the other main result:
\begin{theorem}\label{thm:bifurcation2}
Suppose there is an appropriate action by a discrete group $\Gamma$ on $S^n$. Then for each $j\geq 1$, there is an $\eps_j>0$ such that for 
$\lambda\in (\lambda_j,\lambda_j+\eps_j)$ there exist non-trivial $\Gamma$-equivariant solutions $u_{j,\lambda}$ to the minimal 
surface equation on $S^n$ in the warped product $(S^n \times (-1,1), g_\lambda)$;
these solutions bifurcate off the main branch $u\equiv 0$.
\end{theorem}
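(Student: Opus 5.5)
The plan is to repeat the argument of Theorem~\ref{thm:bifurcation} verbatim, replacing the single-weight $b$-spaces on $S^2$ by the multi-weighted iterated edge spaces $\vec{\rho}^{\,\vec{\nu}}\calC^{2,\alpha}_{\ie,\Gamma}(S^n)$. First I fix a multi-weight with each $\nu_m \in (1, (\gamma_0^{(n-m-1)})^+)$. By the estimate $(\gamma_0^{(n-m-1)})^- < 1 < (\gamma_0^{(n-m-1)})^+$ derived from convexity of the cross-sectional polytopes $Q$, this range is nonempty and lies inside the Fredholm window. The second lemma above then gives that $(\lambda,u)\mapsto \calH_\lambda(u)$ is a smooth map from $\RR^+\times\calU$ into $\vec{\rho}^{\,\vec{\nu}-2}\calC^{0,\alpha}_{\ie,\Gamma}(S^n)$. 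Smoothness in $\lambda$ is clear, since $f_\lambda(u)=f(\lambda u)$ depends smoothly on $\lambda$. We also have $\calH_\lambda(0)=0$ for all $\lambda$ because $f'(0)=0$, so $u\equiv 0$ is the trivial branch.

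Next, I identify the linearization $D_u\calH_\lambda|_{u=0}=L_\lambda=\Delta-n\lambda^2f''(0)$ and prove the analogue of Lemma~\ref{2dsingvalues}: $L_\lambda$ fails to be an isomorphism between these spaces exactly when $\lambda\in\{\lambda_j\}$. The first lemma above gives the Fredholm property. I will argue that the index is zero by a duality argument like the one in Section~\ref{sec:perturbative}. The weight window is symmetric about the dual weight at each stratum, so one can pass continuously to a weight self-dual in the $L^2$ sense without crossing indicial roots. Then $L_\lambda$ is formally self-adjoint on $\Gamma$-equivariant sections, so its cokernel is identified with the kernel on the dual weight, which is the same kernel.

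For the kernel, any $u$ in the weighted space lies in the Friedrichs domain, since $\nu_m>1$ makes $u$ and $\nabla u$ square integrable near every stratum. Conversely, the regularity statement in the first lemma puts $L^2_\Gamma$ eigenfunctions into every weighted space in the range. Hence the kernel equals the $0$-eigenspace of the Friedrichs extension, which is nontrivial exactly at $\lambda_j$. The one-dimensional kernel is then handled as in the $n=2$ case, and the eigenvalues $\mu_j(\lambda)=\mu_j(0)+n\lambda^2f''(0)$ cross $0$ with nonzero speed $2n\lambda_jf''(0)$.

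Finally I apply the bifurcation theorems. At $\lambda_1$ the ground state of the Dirichlet problem on $P_1$ is simple, and the crossing is transversal by the variation formula. Crandall--Rabinowitz \cite{CR} therefore gives a smooth branch for $\lambda\in(\lambda_1,\lambda_1+\eps_1)$; to apply it I need the transversality condition $\partial_\lambda L_\lambda\phi_1\notin\mathrm{Ran}\,L_{\lambda_1}$, which follows from self-adjointness and $\int\phi_1^2\neq0$. For general $j$, where multiplicity may occur, I invoke Smoller--Wasserman \cite{SW}. Its hypothesis is a change in the Morse index (or Leray--Schauder degree) across $\lambda_j$. This holds because the number of negative eigenvalues of $-L_\lambda$ on $L^2_\Gamma$ jumps by the multiplicity of $\mu_j(0)$, and all eigenvalues move strictly monotonically in $\lambda$.

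The main obstacle is the Fredholm index-zero and kernel-identification step in the iterated edge setting. It relies on the \cite{AMU} calculus, and one must check that the duality pairing and regularity work uniformly at all the strata simultaneously, including at the corners where strata meet. I would first verify this at the deepest stratum, $\calT^{(0)}$, using the $b$-theory of Proposition~\ref{prop:fredholm} on the link, and then induct upward.
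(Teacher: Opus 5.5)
Your argument is the paper's: take the smoothness lemma and the Fredholm/regularity lemma on $\vec{\rho}^{\,\vec{\nu}}\calC^{2,\alpha}_{\ie,\Gamma}(S^n)$ with each $\nu_m\in(1,(\gamma_0^{(n-m-1)})^+)$, identify the singular parameters with the $\lambda_j$, and apply Crandall--Rabinowitz at $\lambda_1$ and Smoller--Wasserman at general $\lambda_j$, exactly as in Theorem~\ref{thm:bifurcation}. Your index-zero argument (the window at each stratum is symmetric about the $L^2$-self-dual weight, and $L_\lambda$ is formally self-adjoint) and your two-sided kernel identification are sound, modulo the \cite{AMU} calculus the paper also relies on. They fill in what the paper leaves implicit.

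The gap is in the last step, and the paper's proof of Theorem~\ref{thm:bifurcation} makes the same leap: neither theorem gives solutions for \emph{every} $\lambda\in(\lambda_j,\lambda_j+\eps_j)$. Crandall--Rabinowitz gives a curve $s\mapsto(\lambda(s),\,s\phi_1+o(s))$ with $\lambda(0)=\lambda_1$. Near $(\lambda_1,0)$ the zero set is exactly this curve plus the trivial branch, but nothing is said about the sign of $\lambda(s)-\lambda_1$. A Morse/Conley index jump (Smoller--Wasserman) only shows that $(\lambda_j,0)$ is a bifurcation point.

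The side is genuinely undetermined here. Since $f$ is even, $\calA_\lambda$ is even in $u$. Normalize $f(0)=1$, as the formula for $L_\lambda$ implicitly does. Then the Lyapunov--Schmidt reduced area on $\mathrm{span}(\phi_1)$ is $\tfrac{s^2}{2}\mu_1(\lambda)\|\phi_1\|_{L^2}^2+s^4Q_4(\phi_1)+O(s^6)$, where
\[
Q_4(\phi)=\lambda^4\Big(\tfrac{n f^{(4)}(0)}{24}+\tfrac{n(n-1)f''(0)^2}{8}\Big)\int\phi^4+\tfrac{(n-2)\lambda^2f''(0)}{4}\int\phi^2|\nabla\phi|^2-\tfrac18\int|\nabla\phi|^4 .
\]
Small nontrivial critical points require $\mu_1(\lambda)$ and $Q_4(\phi_1)$ to have opposite signs. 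Suppose $f^{(4)}(0)\le-3(n-1)f''(0)^2$. Then every term of $Q_4(\phi_1)$ is $\le 0$ and the last is $<0$, so the pitchfork is subcritical. By the uniqueness part of Crandall--Rabinowitz, there are then no small nontrivial solutions for $\lambda$ slightly above $\lambda_1$.

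So what your argument proves is that each $(\lambda_j,0)$ is a bifurcation point, with a Crandall--Rabinowitz curve at $\lambda_1$. To get the one-sided statement as written, you must either compute the sign of $Q_4(\phi_1)$ and add the corresponding hypothesis on $f$, or weaken the conclusion (e.g.\ to Rabinowitz's alternative for potential operators).

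A smaller point: the Leray--Schauder degree changes across $\lambda_j$ only when the multiplicity is odd. For even multiplicity you must use the variational structure explicitly. From the paper's formula one checks $\calH_\lambda=-f_\lambda(u)^{-n}\nabla_{L^2}\calA_\lambda$. Hence, reducing $\nabla_{L^2}\calA_\lambda$ onto $\ker L_{\lambda_j}$ gives a gradient map, and the Morse-index jump then applies.
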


We conclude this section with an easy observation.
\begin{corollary}
Let $(X, g)$ denote an arbitrary compact $(n+1)$-manifold.  Then there is a (potentially large) deformation of the metric $g$ to a new metric $g'$ such that
$(X, g')$ admits compact minimal hypersurfaces which are multi-valued graphs over a sphere $S^n \subset X$.
\end{corollary}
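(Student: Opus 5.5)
The plan is to embed the warped-product example from Theorem~\ref{thm:bifurcation2} into $X$. Fix a smoothly embedded $n$-sphere $S\subset X$ that bounds a small geodesic ball, or simply sits inside a coordinate chart, so that it has a trivial normal bundle. A tubular neighborhood $U\cong S^n\times(-1,1)$ of $S$ is then diffeomorphic to the model space of Section~\ref{sec:compact}. On this neighborhood we choose a fixed even warping function $f$ with $f'(0)=0$, $f''(0)<0$, and $f$ positive on $(-1,1)$. We then fix $\lambda\in(\lambda_1,\lambda_1+\eps_1)$ and take the corresponding metric $g_\lambda=dt^2+f(\lambda t)^2g_{S^n}$. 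Theorem~\ref{thm:bifurcation2} applied on $U$ provides a nontrivial $\Gamma$-equivariant two-valued solution $u_{1,\lambda}$, whose two-sheeted graph $Y$ is a compact branched minimal hypersurface in $(U,g_\lambda)$.

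The key observation is that minimality is a local condition. It depends only on the metric in a neighborhood of $Y$, so it is enough to choose $g'$ equal to $g_\lambda$ on a neighborhood of $Y$. Since $u_{1,\lambda}$ bifurcates from $0$, we may take $\lambda$ close to $\lambda_1$. Then $\|u_{1,\lambda}\|_{L^\infty}$ is small, so $Y\subset S^n\times[-\delta,\delta]$ for some $\delta<1/2$ (after rescaling $t$ if needed). We pick a cutoff $\chi(t)$ equal to $1$ on $[-\delta,\delta]$ and supported in $(-1/2,1/2)$, together with a partition of unity on $X$. We set $g'=\chi g_\lambda+(1-\chi)\tilde g$ on $U$ and $g'=g$ outside $U$. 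Here $\tilde g$ is any metric that interpolates smoothly, for instance $\tilde g=g$ extended by a convex combination; positive definiteness is preserved because convex combinations of metrics are metrics. The resulting $g'$ is a smooth metric on $X$ that agrees with $g_\lambda$ near $Y$, so $Y$ is minimal in $(X,g')$ and is a two-valued graph over $S=S^n\times\{0\}$.

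The only point requiring care is the confinement of $Y$ inside the region where $\chi\equiv1$. I would handle it using the smallness of the bifurcating branch in the $\vec\rho^{\,\vec\nu}\calC^{2,\alpha}_{\ie}$ norm, which controls the sup norm. Alternatively, one can avoid any smallness: $Y$ is compact and lies in $S^n\times(-1,1)$, so it lies in $S^n\times[-1+\eta,1-\eta]$ for some $\eta>0$, and the cutoff can be chosen equal to $1$ there. The deformation may be large, as the statement allows, since $g_\lambda$ need not be close to $g$.
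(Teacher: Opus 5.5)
Your proposal is correct and follows essentially the same route as the paper. You put the warped product metric $g_\lambda$ on a product neighborhood $S^n\times(-1,1)$ of an embedded sphere in $X$ and invoke the bifurcation theorem there. Your cutoff $g'=\chi g_\lambda+(1-\chi)g$ spells out the gluing step that the paper leaves implicit. It relies on two observations: the compact graph $Y$ lies in some $S^n\times[-1+\eta,1-\eta]$, and minimality depends only on the metric near $Y$.
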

For the proof, simply note that we can choose a new metric $g_1$ which has a neighborhood which is isometric to the product $S^n \times (-1,1)$.
We then let $g_\lambda'$ denote a family of metrics on $X$ which restrict to this neighborhood and are isometric to the warped product
metric $g_\lambda$ there.   Following the steps above, we can then construct branched minimal submanifolds in this neighborhood.

\bibliographystyle{alpha} 
\bibliography{bilbo}

\end{document}